 \newtheorem{theorem}{Theorem}
\newtheorem{proposition}{Proposition}
 \newtheorem{corollary}{Corollary}
 \newtheorem{lemma}{Lemma}
\newtheorem{conjecture}{Conjecture}
 \theoremstyle{remark}
 \newtheorem{remark}{Remark}
\newtheorem{?}{Question}
\renewcommand{\epsilon}{\varepsilon}
\newcommand{\R}{\mathbb R}
\newcommand{\C}{\mathbb C}
\title{On the location of chromatic zeros of series-parallel graphs}
\author{Ferenc Bencs\thanks{Funded by the Netherlands Organisation of Scientific Research (NWO): VI.Vidi.193.068} 
\quad Jeroen Huijben\thanks{Funded by the Netherlands Organisation of Scientific Research (NWO) Mathematics Cluster PhD position 2017 WC.V17.012.}
\quad 
Guus Regts\thanks{Funded by the Netherlands Organisation of Scientific Research (NWO): VI.Vidi.193.068}\\
\small Korteweg de Vries Institute for Mathematics,  \\[-0.8ex]
\small University of Amsterdam,\\[-0.8ex] 
\small the Netherlands.\\
\small\tt \{ferenc.bencs,jeroenhuijben95,guusregts\}@gmail.com
}
\date{\today}
\begin{document}
\maketitle

\begin{abstract}
In this paper we consider the zeros of the chromatic polynomial of series-parallel graphs. 
Complementing a result of Sokal, giving density outside the disk \mbox{$|q-1|\leq1$},
we show density of these zeros in the half plane $\Re(q)>3/2$ and we show there exists an open region $U$ containing the interval $(0,32/27)$ such that $U\setminus\{1\}$ does not contain zeros of the chromatic polynomial of series-parallel graphs.

We also disprove a conjecture of Sokal by showing that for each large enough integer $\Delta$ there exists a series-parallel graph for which all vertices but one have degree at most $\Delta$ and whose chromatic polynomial has a zero with real part exceeding $\Delta$.
\\
\quad \\
{\bf Keywords} Chromatic polynomial, chromatic zeros, series-parallel graphs, Montel's theorem.
\end{abstract}

\section{Introduction}
Recall that the chromatic polynomial of a graph $G=(V,E)$ is defined as
\[
Z(G;q):=\sum_{F\subseteq E}(-1)^{|F|}q^{k(F)},
\]
where $k(F)$ denotes the number of components of the graph $(V,F)$.
We call a number $q\in \mathbb{C}$ a \emph{chromatic zero} if there exists a graph $G$ such that $Z(G;q)=0.$

About twenty years ago Sokal~\cite{Sokaldense} proved that the set of chromatic zeros of all graphs is dense in the entire complex plane. 
In fact, he only used a very small family of graphs to obtain density.
In particular, he showed that the chromatic zeros of all generalized theta graphs (parallel compositions of equal length paths) are dense outside the disk $B_1(1)$. (We denote for $c\in \mathbb{C}$ and $r>0$ by $B_r(c)$ the closed disk centered at $c$ of radius $r$.)
Extending this family of graphs by taking the disjoint union of each generalized theta graph with an edge and connecting the endpoints of this edge to all other vertices, he then obtained density in the entire complex plane.

As far as we know it is still open whether the chromatic zeros of all planar graphs or even series-parallel graphs are dense in the complex plane.
Motivated by this question and Sokal's result we investigate in the present paper what happens inside the disk $B_1(1)$ for the family of series-parallel graphs. See Section~\ref{sec:SP} for a formal definition of series-parallel graphs.
Our first result implies that the chromatic zeros of series-parallel are \emph{not} dense in the complex plane.

\begin{theorem}\label{thm:(0,32/27)}
There exists an open set $U$ containing the open interval $(0,32/27)$ such that $Z(G;q)\neq 0$ for any $q\in U\setminus \{1\}$ and for all series-parallel graphs $G$.
\end{theorem}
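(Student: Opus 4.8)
The plan is to reduce the statement to the dynamics of two-terminal series-parallel graphs and then to produce an explicit forward-invariant region that avoids the relevant point.

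Since $Z$ is multiplicative over blocks, the chromatic zeros of a series-parallel graph are those of its blocks, and each block is either an edge or a $2$-connected series-parallel graph, hence a two-terminal series-parallel graph $H$ with terminals $s,t$, built from a single edge by series and parallel compositions. For such $H$ let $b_0(H;q)$ and $b_1(H;q)$ be the polynomials counting proper $q$-colourings with $\chi(s)=\chi(t)$, resp.\ $\chi(s)\neq\chi(t)$, each normalised by the number of admissible colourings of $\{s,t\}$, so that $Z(H;q)=q\bigl(b_0(H;q)+(q-1)b_1(H;q)\bigr)$; a single edge has $(b_0,b_1)=(0,1)$, a parallel composition multiplies these pairs coordinatewise, and a series composition applies a fixed bilinear rule. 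With $f_H:=b_0(H;\cdot)/b_1(H;\cdot)\in\widehat{\mathbb C}$ and the involutive M\"obius map $\phi_q(z):=1+q/(z-1)$, a short computation gives $f_{H_1\parallel H_2}=f_{H_1}f_{H_2}$ and $\phi_q(f_{H_1\cdot H_2})=\phi_q(f_{H_1})\,\phi_q(f_{H_2})$; moreover $Z(H;q)=0$ is equivalent to $f_H(q)=1-q$ for $q\notin\{0,1\}$ (and one checks $b_1(H;q)\neq 0$ along the way, so $f_H$ is well defined there). Hence, if $S_q$ denotes the smallest subset of $\widehat{\mathbb C}$ containing $0$ that is closed under $(z_1,z_2)\mapsto z_1z_2$ and under $(z_1,z_2)\mapsto\phi_q\bigl(\phi_q(z_1)\phi_q(z_2)\bigr)$, then $S_q=\{f_H(q):H\}$, and Theorem~\ref{thm:(0,32/27)} becomes the assertion that $1-q\notin S_q$ for every $q\in U\setminus\{1\}$.

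To establish this it suffices to exhibit, for each such $q$, a set $\Omega_q\subseteq\widehat{\mathbb C}$ with (i)~$0\in\Omega_q$, (ii)~$\Omega_q\cdot\Omega_q\subseteq\Omega_q$, (iii)~$\phi_q(\Omega_q)\cdot\phi_q(\Omega_q)\subseteq\phi_q(\Omega_q)$, and (iv)~$1-q\notin\Omega_q$: an induction on the series-parallel structure of $H$ (using $\phi_q\circ\phi_q=\mathrm{id}$) then forces $f_H(q)\in\Omega_q$, whence $S_q\subseteq\Omega_q$. A point to keep in mind is that $\Omega_q$ cannot be convex once $S_q$ meets both sides of $1-q$: for real $q\in(1,2)$ one has, with $P_m$ the path on $m$ edges, $f_{P_2}(q)=\tfrac{q-1}{q-2}<1-q<0=f_{P_1}(q)$, while $f_{P_m}(q)\to 1-q$ as $m\to\infty$ whenever $|1-q|<1$; so $\Omega_q$ must wrap around $1-q$ and delete exactly that single point, since deleting a whole neighbourhood of it would destroy (ii)--(iii). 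For real $q\in(0,1)$ one can simply take $\Omega_q=[0,1-q)$: the interval $[0,1-q]$ is multiplicatively closed (as $(1-q)^2<1-q$) and $\phi_q$, being decreasing on $(-\infty,1)$ and interchanging $0$ and $1-q$, maps it onto itself, so $\phi_q(\Omega_q)=(0,1-q]$ is closed too; and for $x\in(0,1-q)$ we have $(1-q)/x>1>1-q$, so no two elements of $[0,1-q)$ multiply to $1-q$. This verifies (i)--(iv).

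For $q$ in a complex neighbourhood of $(0,1)$, and on into the range $(1,32/27)$, the plan is to thicken and bend this interval into a region bounded by circular arcs (images under $\phi_q$ of round discs) of the shape ``a small region containing the segment $[0,1-q]$, with only the point $1-q$ removed'': one keeps it inside the unit disc so that (ii), (iii) and $b_1(H;q)\neq 0$ survive, and arranges that the region and its image under $z\mapsto(1-q)/z$ overlap only near the points $1$ and $1-q$, so that products never land on $1-q$, i.e.\ (iv) holds. Since one builds $\Omega_q$ directly for complex $q$, the resulting exceptional set $U$ is open, and no separate perturbation argument is needed; a normal-families viewpoint (Montel's theorem, as used for the density results in the paper) can alternatively be brought in to identify the chromatic zeros as a non-normality set. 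The constant $32/27$ should emerge exactly as the threshold beyond which the quantitative closure constraints (ii)--(iv) can no longer be met simultaneously — in agreement with $32/27$ being the sharp right-hand endpoint of Jackson's zero-free interval for real chromatic roots, whose extremal graphs are series-parallel.

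The hard part will be precisely this quantitative construction: producing $\Omega_q$ so that (ii)--(iv) hold simultaneously and uniformly as $q$ ranges over a genuine open set. I expect the two delicate regimes to be $q$ near $1$, where $0$ and $1-q$ collide and the region pinches (so a separate, ad hoc family of regions is needed there), and $q$ near $32/27$, where the arcs bounding $\Omega_q$ and the arcs bounding $(1-q)/\Omega_q$ become tangent and the whole scheme is on the verge of failing.
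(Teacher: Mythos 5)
Your overall strategy coincides with the paper's: track the effective edge interaction $y_G(q)=(q-1)R(G;q)$ (your $f_H$) through the recursions $y_{G_1\parallel G_2}=y_{G_1}y_{G_2}$ and $f_q(y_{G_1\bowtie G_2})=f_q(y_{G_1})f_q(y_{G_2})$, and exhibit for each $q$ a forward-invariant region certifying that the value $1-q$ is never attained. Your reduction, the recursions, and the one-dimensional model $\Omega_q=[0,1-q)$ for real $q\in(0,1)$ are all correct. But there is a genuine gap: the quantitative construction of $\Omega_q$ for complex $q$, for $q$ near $1$, and for $q\in(1,32/27)$ --- which you yourself flag as ``the hard part'' --- is only a plan, and that construction \emph{is} the proof. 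Worse, your formulation makes it substantially harder than necessary. By demanding (iv) $1-q\notin\Omega_q$ while (as you correctly observe) $S_q$ accumulates at $1-q$ and, for $q\in(1,2)$, straddles it on the real axis, you force $\Omega_q$ to be a non-closed region wrapping around a puncture at $1-q$. For such a region, (ii) together with (iv) requires that no two elements of $\Omega_q$ multiply to $1-q$, i.e.\ that $\Omega_q$ and $\{(1-q)/z\mid z\in\Omega_q\}$ be \emph{disjoint} --- not merely ``overlap only near $1$ and $1-q$'', since any common point yields a product equal to $1-q$ --- and it is far from clear how to verify this for a set accumulating at $1-q$ from both sides, let alone uniformly over an open set of $q$.

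The paper sidesteps this entirely with a weaker hypothesis: the invariant set $V$ is a \emph{closed disk} that is allowed to contain $1-q$ in its interior; one only requires $1-q\notin V^2$ (the set of pairwise products), and the avoidance of $1-q$ by the actual interactions is handled by a separate structural clause in the induction: if $y_1,y_2\neq 1-q$ then $f_q(y_1)f_q(y_2)\neq 0=f_q(1-q)$, so a series composition can never output $1-q$. Working with closed disks buys two further things your scheme would lose: the Grace-Walsh-Szeg\H{o} theorem gives $V^2=\{y^2\mid y\in V\}$, so multiplicative closure reduces to a radius computation, and the strict inclusions $V^2\Subset V$ and $f_q(f_q(V)^2)\Subset V$ survive small perturbations of $q$, which is exactly how the open set $U$ is produced. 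I would replace your condition (iv) by the pair ``$1-q\notin\Omega_q\cdot\Omega_q$ plus the structural argument for series compositions'' before attempting the quantitative step; as it stands the proposal does not constitute a proof.
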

We note that the interval $(0,32/27)$ is tight, as shown in~\cite{jackson1993zero,thomassen1997zero}. In fact,  Jackson~\cite{jackson1993zero} even showed that there are no chromatic zeros in the interval $(1,32/27)$.
Unfortunately, we were not able to say anything about larger families of graphs and we leave open as a question whether Theorem~\ref{thm:(0,32/27)} is true for the family of all planar graphs for example.

In terms of chromatic zeros of series-parallel graphs inside the disk $B_1(1)$ we have found an explicit condition, Theorem~\ref{thm:to escape or not to escape} below, that allows us to locate many zeros inside this disk.
Concretely, we have the following results.

\begin{theorem}\label{thm:q>32/27}
Let $q>32/27$. Then there exists $q'\in \mathbb{C}$ arbitrarily close to $q$ and a series-parallel graph $G$ such that $Z(G;q')=0$.
\end{theorem}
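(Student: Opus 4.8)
The plan is to reduce, via Montel's theorem, the assertion ``chromatic zeros of series--parallel graphs accumulate at $q_0$'' to a non-normality statement for a family of rational maps, and then to witness that non-normality by iterating a small series--parallel gadget; the threshold $32/27$ will enter through an extremal value of a cubic.

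First I would set up the usual transfer encoding of two-terminal series--parallel graphs. For $G$ with distinguished vertices $s,t$, let $Z_{=}(G;q)$, $Z_{\neq}(G;q)$ be the two parts of $Z(G;q)=Z_{=}(G;q)+Z_{\neq}(G;q)$ obtained by distinguishing whether $s,t$ get the same value, put $a_G=Z_{=}/q$, $b_G=Z_{\neq}/(q(q-1))$ and $r_G=a_G/b_G$. One checks that parallel composition acts by $r\mapsto r_1r_2$, series composition by $r\mapsto (r_1r_2+q-1)/(r_1+r_2+q-2)$, a single edge has $r=0$, and $Z(G;q)=q\,b_G(q)\,(r_G(q)+q-1)$; hence for $q'\neq0$ a series--parallel graph built from $G$ has $Z(G;q')=0$ precisely when $r_G(q')=1-q'$ (barring the degenerate $a_G(q')=b_G(q')=0$). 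I would also note that for the series--parallel graphs $G\|e$ and $G\|P$ (with $P$ a path of at least two edges) we get $Z=q(q-1)b_G$ and $Z=q\,b_Gb_P\,(r_Gr_P+q-1)$, so the zero sets of $b_G$ and of $r_G-(1-q)/r_P$ also produce chromatic zeros.

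Next, the Montel reduction. Suppose for contradiction that for some real $q_0>32/27$ (we may exclude a discrete set of exceptional values and recover them afterwards by density) no series--parallel graph has a chromatic zero in a small disc $U\ni q_0$ avoiding $\{0,1\}$. Then $b_G$ is zero-free on $U$ for every two-terminal series--parallel $G$, so $r_G\colon U\to\hat{\mathbb C}$ is holomorphic and omits $\infty$ as well as the $q$-holomorphic families of values $1-q$ and $(1-q)/r_P(q)$; composing with the $q$-holomorphic M\"obius transformation sending three of these to $\{0,1,\infty\}$ gives a family omitting $\{0,1,\infty\}$, hence normal by Montel, so $\{r_G\}_G$ is normal on $U$. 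Conversely, if some sequence $\{r_{G_n}\}_n$ were non-normal near $q_0$, a Zalcman rescaling would produce $q_n\to q_0$, $\rho_n\downarrow0$ and a non-constant meromorphic limit $g$ of $r_{G_n}(q_n+\rho_n\cdot)$; by Hurwitz $g$ still omits $\infty$, so by Picard it attains the value $1-q_0$, and applying Hurwitz to $r_{G_n}(q_n+\rho_n\zeta)-(1-(q_n+\rho_n\zeta))$ yields $q_n'\to q_0$ with $Z(G_n;q_n')=0$, a contradiction. Thus it suffices to construct, for each such $q_0$, two-terminal series--parallel graphs whose ratio functions are a non-normal family at $q_0$.

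This last step is the crux. I would fix a constant-size two-terminal series--parallel gadget with one distinguished ``input'' edge and let $G_n$ be obtained by substituting $G_{n-1}$ into that edge, so $r_{G_n}(q)=\Psi_q(r_{G_{n-1}}(q))$ for a rational map $\Psi_q$ of degree $\ge2$ in $r$ --- degree at least two is essential, since Sokal's generalized theta graphs realize only M\"obius $\Psi_q$ and, as his result shows, never reach the interior of $B_1(1)$, whereas $(32/27,2)$ lies there. The starting ratio is a critical point of $\Psi_q$, and the gadget should be chosen so that (i) its fixed-point/critical-point condition reduces to solvability of a cubic whose extremal value on $[0,2]$ is exactly $32/27=\max_{x\in[0,2]}x(2-x)^2$, and (ii) for real $q_0>32/27$ the forward orbit of the critical point under $\Psi_{q_0}$ does not converge to an attracting cycle, so $\{\Psi_{q_0}^{\circ n}\}_n$ is non-normal at the starting ratio. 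The real obstacle is then to upgrade this $r$-space non-normality at the single parameter $q_0$ to non-normality of the $q$-family $\{r_{G_n}(q)\}_n$: this is a transversality statement (the $q$-deformation must not be tangent to the Julia dynamics of $\Psi_{q_0}$), and choosing the gadget so that the transition happens exactly at the sharp value $32/27$ --- consistent with the Jackson--Thomassen zero-free interval --- is what makes it delicate. I expect this transversality-plus-Montel mechanism to be isolated as a checkable ``escape'' criterion (cf.\ Theorem~\ref{thm:to escape or not to escape}), after which Theorem~\ref{thm:q>32/27} reduces to verifying that criterion for every real $q>32/27$.
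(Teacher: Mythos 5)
Your overall architecture matches the paper's: encode series--parallel composition as a recursion on the ratio $r_G=(q-1)Z_{=}/Z_{\neq}$ (the paper's effective edge interaction $y_G$), reduce ``zeros accumulate at $q_0$'' to non-normality of the family $\{q\mapsto r_G(q)\}$ via Montel, and witness non-normality by iterating a fixed gadget, with $32/27$ arising as the maximum of the cubic $x(2-x)^2=(z+1)(z-1)^2$ (under $x=1+z$). The setup and the Montel reduction are essentially sound, though the paper does the reduction more directly, via Lemma~\ref{lem:Z=0 R-=-1} plus Montel applied to the omitted values $\{-1,0,\infty\}$ of $R$, with no need for Zalcman rescaling or Picard.

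However, the crux is left unexecuted, and the mechanism you propose for it contains a genuine gap. You never specify the gadget, and you never verify, for a given $q>32/27$, that the orbit of the critical point behaves as required; items (i) and (ii) remain wishes. More importantly, the route you sketch --- non-normality of the iterates $\Psi_{q_0}^{\circ n}$ in the $r$-variable at the single parameter $q_0$, upgraded to non-normality in $q$ by a transversality statement --- is exactly the step you flag as delicate and do not supply, and it is not how the difficulty is actually resolved. The paper proves something more elementary and stronger: take $g(z)=f_q(z^2)$ with $f_q(y)=1+q/(y-1)$, so that $g^{\circ k}(0)$ alternates between effective and virtual interactions of explicit series--parallel graphs (iterated ``two copies in parallel, then two in series,'' starting from a single edge). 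A fixed point of $g$ in $(-1,0]$ would force $q=(z-1)^2(z+1)\le 32/27$, so for $q>32/27$ the real orbit $g^{\circ k}(0)$ is strictly decreasing on $(-1,0]$ and must exit past $-1$ in finitely many steps. This hands you an interaction of modulus exceeding $1$ (after an arbitrarily small perturbation of $q$ in the boundary case $g^{\circ k}(0)=-1$, using that interactions are non-constant rational functions of $q$), and the escape criterion of Theorem~\ref{thm:to escape or not to escape} then yields non-normality of the $q$-family directly: its proof powers up the escaping value over an open $q$-neighborhood until the images land both inside and outside the unit circle, with no transversality input at all. If you replace your items (i)--(ii) by this explicit monotone-escape argument, your outline becomes the paper's proof; as it stands, the decisive step is missing.
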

This result may be seen as a a variation on Thomassen's result~\cite{thomassen1997zero} saying that real chromatic zeros (of not necessarily series-parallel graphs) are dense in $(32/27,\infty)$. 

Another result giving many zeros inside $B_1(1)$ is the following.
\begin{theorem}\label{thm:Re(q)>3/2}
The set of chromatic zeros of all series-parallel graphs is dense in the set $\{q\mid \Re(q)> 3/2\}.$
\end{theorem}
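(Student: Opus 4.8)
The plan is to translate the question into the dynamics of a scalar ``ratio'' attached to two-terminal series-parallel graphs, and then run a Montel-type normal-families argument. \emph{The ratio calculus.} For a two-terminal series-parallel graph $G$ with terminals $s,t$, let $G^{+}$ be $G$ with an extra edge joining $s$ and $t$, and $G^{\circ}$ be $G$ with $s$ and $t$ identified; both are again series-parallel ($G^{+}$ is a parallel composition of $G$ with an edge, and $G^{\circ}$ is a minor of $G^{+}$, while series-parallel graphs are minor-closed). Splitting the sum over colourings according to whether $s$ and $t$ get the same colour gives the identity $Z(G;q)=Z(G^{+};q)+Z(G^{\circ};q)$, so, writing $R(G):=Z(G^{+};q)/Z(G^{\circ};q)$, we get $Z(G;q)=Z(G^{\circ};q)\bigl(1+R(G)\bigr)$; thus a zero of $Z(G;\cdot)$ away from the zeros of $Z(G^{\circ};\cdot)$ is exactly a solution of $R(G)=-1$. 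A short deletion--contraction computation shows that $R$ is governed by explicit maps: parallel composition acts by $(R_1,R_2)\mapsto R_1R_2/(q-1)$, and series composition by $(R_1,R_2)\mapsto\bigl(R_1+R_2+\tfrac{q-2}{q-1}R_1R_2\bigr)/\bigl(1+\tfrac1{q-1}R_1R_2\bigr)$; a single edge has $R=\infty$. It is convenient to set $w(G):=R(G)/(q-1)$, in which parallel composition is just multiplication, $w(G_1\parallel G_2)=w(G_1)w(G_2)$; for the path $P_n$ on $n$ vertices one has $w(P_3)=(q-2)/(q-1)$, and the $w(P_n)$ follow the recursion $w\mapsto\tfrac1{(q-1)w}+\tfrac{q-2}{q-1}$, whose fixed points are $w=1$ and $w=-1/(q-1)$, the latter attracting precisely when $\abs{q-1}<1$.

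\emph{The Montel engine.} Fix $q_0$ with $\Re q_0>3/2$ and suppose, towards a contradiction, that some disk $D\ni q_0$ contains no chromatic zero of any series-parallel graph. Then for every two-terminal series-parallel $G$ the function $R(G)$ maps $D$ into $\widehat{\mathbb C}\setminus\{-1,0,\infty\}$: $Z(G^{\circ};\cdot)$ has no zero in $D$, so $R(G)$ is holomorphic on $D$ and omits $\infty$; $R(G)(q)=0$ would give a zero of the series-parallel graph $G^{+}$; and $R(G)(q)=-1$ would give $Z(G;q)=0$. Hence any sequence of such ratio functions omits three values on $D$ and is therefore normal on $D$ by Montel's theorem. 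Consequently the whole theorem reduces to the following: for each $q_0$ with $\Re q_0>3/2$ there is a sequence $H_1,H_2,\dots$ of two-terminal series-parallel graphs such that $\{R(H_n)\}$ is \emph{not} normal at $q_0$; the resulting contradiction forces a chromatic zero in every disk about $q_0$.

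\emph{Building the non-normal families.} When $\abs{q_0-1}>1$ there is nothing to do: Sokal's theorem already gives chromatic zeros of generalized theta graphs --- which are series-parallel --- dense there. The new case is the lens-shaped region $\{\Re q>3/2\}\cap\{\abs{q-1}\le1\}$, and by density it suffices to treat its interior, where $\abs{q_0-1}<1$. Here I would use paths. First, $\abs{w(P_3)(q_0)}=\abs{q_0-2}/\abs{q_0-1}<1$, precisely because $\Re q_0>3/2$ means $q_0$ is closer to $2$ than to $1$. Second, since $\abs{q_0-1}<1$ the path recursion has $w=-1/(q_0-1)$ as its attracting fixed point --- of modulus $1/\abs{q_0-1}>1$ --- and $w(P_3)(q_0)$ lies in its basin (the only non-converging point of a Möbius map is the repelling fixed point $w=1$), so $\abs{w(P_\ell)(q_0)}>1$ for all large $\ell$. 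Put $a=w(P_3)(q_0)$ and $b=w(P_\ell)(q_0)$, so $\abs a<1<\abs b$; the parallel compositions $H_{i,j}:=P_3^{\parallel i}\parallel P_\ell^{\parallel j}$ satisfy $w(H_{i,j})(q_0)=a^{i}b^{j}$. Since $\log\abs a<0<\log\abs b$, for a suitable $\ell$ (and, if $\log\abs a/\log\abs b$ is rational, after a harmless move of $q_0$ inside $D$) one can pick $i_n,j_n\to\infty$ with $\abs{a^{i_n}b^{j_n}}\to1$ while $i_n+j_n\to\infty$. For $H_n:=H_{i_n,j_n}$ the values $w(H_n)(q_0)$ then stay bounded, whereas the logarithmic derivative of $R(H_n)$ at $q_0$ is $i_n(\log w(P_3))'+j_n(\log w(P_\ell))'$ up to a bounded term and so grows like $i_n+j_n$; hence the spherical derivative of $R(H_n)$ at $q_0$ blows up, i.e. $\{R(H_n)\}$ is not normal at $q_0$. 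Combined with the engine this gives the desired density near $q_0$.

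\emph{Where the difficulty lies.} The ratio calculus and the Montel reduction are soft; the substance is (i) the two modulus estimates placing $w(P_3)(q_0)$ strictly inside and $w(P_\ell)(q_0)$ (for large $\ell$) strictly outside the unit circle exactly on the relevant region, and (ii) the bookkeeping turning ``$\abs{w(H_n)(q_0)}\to1$ with unbounded total degree'' into genuine non-normality --- in particular ruling out a cancellation of the two logarithmic derivatives (this is where the freedom in $(i_n,j_n)$, or using three path lengths, is used) and disposing of the rational-relation and $q_0\in\{1,2\}$ edge cases. The threshold $3/2$ is \emph{forced} by (i): $\abs{q-2}<\abs{q-1}$ iff $\Re q>3/2$, so $P_3$, the smallest gadget with finite ratio, straddles the unit circle in the $w$-coordinate exactly across the line $\Re q=3/2$; morally, $32/27$ and $3/2$ are the two thresholds at which, respectively, the invariant region underlying the no-zeros statement first fails and the path-based ``escape'' first becomes available, which is presumably what Theorem~\ref{thm:to escape or not to escape} formalizes.
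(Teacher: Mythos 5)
Your skeleton --- split $Z$ according to whether the terminals receive the same colour, pass to the ratio, reduce density of chromatic zeros to non-normality of the family of ratios via Montel's theorem, and locate the threshold $3/2$ through the path on three vertices, whose interaction crosses the unit circle exactly on the line $\Re q=3/2$ --- is the same as the paper's. The translation between your $w$ and the paper's effective edge interaction $y_G=(q-1)Z^{\mathrm{same}}/Z^{\mathrm{dif}}$ is $w=1/y$; your series/parallel formulas and the fixed points $1$ and $-1/(q-1)$ of the path recursion all check out, and Lemma~\ref{lem:Z=0 R-=-1} is the careful version of your claim that each omitted value $-1,0,\infty$ of the ratio yields a chromatic zero of a series-parallel graph (your appeal to minor-closedness glosses over the fact that identifying the two terminals need not return a two-terminal series-parallel graph; the paper handles this with a minimal-counterexample argument, but this is a repairable gloss, not the main issue).

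The genuine gap is in the non-normality construction, and you have correctly located it yourself without closing it. The constraint $\abs{a^{i_n}b^{j_n}}\to 1$ forces $i_n/j_n\to -\log\abs{b}/\log\abs{a}=:t>0$, so the quantity you need to blow up is $j_n(t\alpha+\beta)+O(1)$ with $\alpha=(\log w(P_3))'(q_0)$ and $\beta=(\log w(P_\ell))'(q_0)$; the advertised ``freedom in $(i_n,j_n)$'' is therefore illusory, since the modulus constraint pins down the ratio, and if $t\alpha+\beta=0$ your sequence has bounded spherical derivative and proves nothing. Ruling out this degeneracy (by varying $\ell$, by a third gadget, or by perturbing $q_0$) is precisely the missing argument. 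The paper's Theorem~\ref{thm:to escape or not to escape} sidesteps derivative estimates entirely: it needs only the single interaction $y_{P_2}(q)=(q-1)/(q-2)$ with $\abs{y_{P_2}(q_0)}>1$ (equivalently your $\abs{w(P_3)(q_0)}<1$). Because this interaction is a non-constant rational function of $q$, it maps a small disk around $q_0$ onto an \emph{open} set outside the closed unit disk; the $N$-th powers (parallel compositions) of that open set tend to $\infty$ while their arguments sweep the whole circle, so after one application of $f_q$ (a series composition) the resulting values straddle the unit circle, and powering once more sends some of them to $0$ and others to $\infty$, so no subsequential limit can be holomorphic. That argument exploits the openness of the image over a neighbourhood of $q_0$ rather than a pointwise balance of two gadgets, which is why no cancellation analysis is needed. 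Your outline is salvageable, but as written the decisive step is asserted, not proved.
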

After inspecting our proof of Theorem~\ref{thm:Re(q)>3/2} (given in Section~\ref{sec:active}) it is clear that one can obtain several strengthenings of this result. 
Figure~\ref{fig:zeroes+zerofree} below shows a computer generated picture displaying where chromatic zeros of series-parallel graphs can be found as well as the zero-free region from Theorem~\ref{thm:(0,32/27)}.

\begin{figure}[ht] 
    \centering
    \includegraphics[width=12cm]{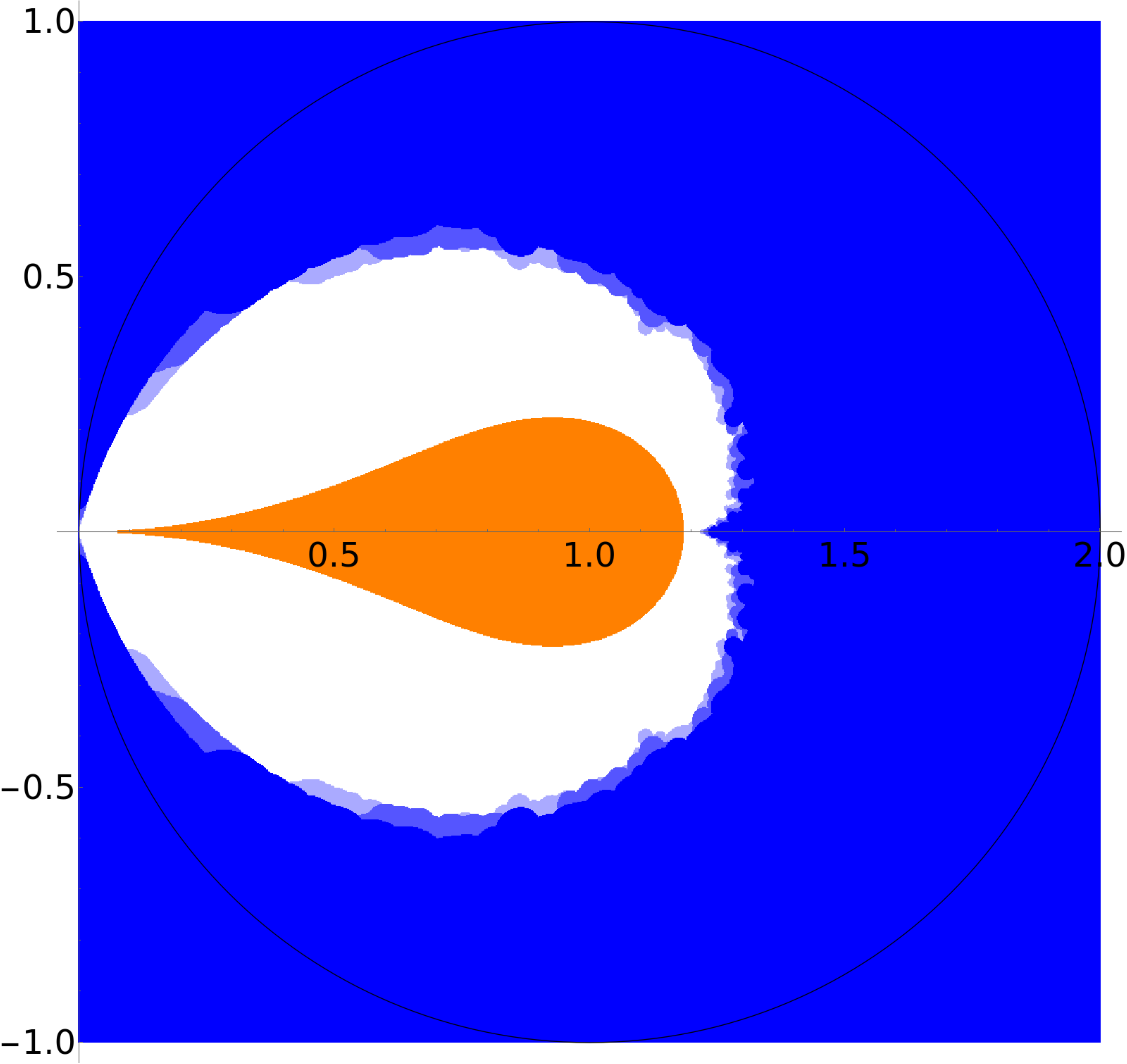}
    \caption{A pixel-picture of chromatic zeros and zero-free regions for series-parallel graphs, with a resolution of $1001\times 1001$ pixels. Every orange pixel represent a provably zero-free value of $q$, while every blue pixel represents a value of $q$ in the closure of the set of all chromatic zeros of series-parallel graphs. The region depicted in the picture ranges from $-i$ to $2+i$. We refer to Section~\ref{sec:questions} for more details concerning the shading.}
    \label{fig:zeroes+zerofree}
\end{figure}

We next restrict our attention to a subclass of series-parallel graphs.
A \emph{leaf joined tree} is a graph $\hat{T}$ obtained from a rooted tree $(T,v)$ by identifying all its leaves except possibly $v$ into a single vertex.
A while ago Sokal conjectured~\cite[Conjecture 9.5']{Sokalsurvey} that for each integer $\Delta\geq 3$ the chromatic zeros of all graphs all of whose vertices have degree at most $\Delta$ except possibly one vertex are contained in the half plane $\{q\mid \Re(q)\leq \Delta\}$.
For $\Delta=3$ this conjecture was disproved by Royle, as Sokal mentions in footnote 31 in~\cite{Sokalsurvey}.
Here we show that this is no coincidence, as we disprove this conjecture for all $\Delta$ large enough.
\begin{theorem}\label{thm:leaf joined}
There exists $\Delta_0>0$ such that for all integers $\Delta\geq \Delta_0$ there exists a leaf joined tree $\hat T$ obtained from a tree $T$ of maximum degree $\Delta$ such that $\hat{T}$ has a chromatic zero $q$ with $\Re(q)>\Delta$.
\end{theorem}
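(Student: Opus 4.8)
The plan is to analyze leaf joined trees via the transfer-matrix / fixed-point formalism for the chromatic polynomial, exploiting the recursive series-parallel structure. For a rooted tree $(T,v)$, build $\hat T$ by identifying all non-root leaves to a single vertex $w$; then $\hat T$ is naturally a two-terminal series-parallel graph with terminals $v$ and $w$. The standard way to track $Z$ on such graphs at a fixed $q$ is through the ``edge parameter'' or the pair $(Z_{\text{same}}, Z_{\text{diff}})$ recording the contributions where the two terminals receive the same versus different colors; parallel composition multiplies these coordinates entrywise (up to the single-edge correction) and series composition applies a fixed Möbius-type map. Since $\hat T$ is assembled from $\Delta$-ary (or near-$\Delta$-ary) branchings, the relevant object is the $\Delta$-fold iterate of a single map $f_q$ on $\mathbb{CP}^1$ (or on $\mathbb C$ after dehomogenizing by the ratio $y = Z_{\text{diff}}/Z_{\text{same}}$), and $Z(\hat T;q)=0$ corresponds to the orbit hitting a prescribed value. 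Concretely, a depth-$k$ rooted tree where every internal vertex has $\Delta$ children identified at the leaves yields a value $y_k$ obtained by iterating $k$ times the map that sends $y \mapsto$ (the series-then-$\Delta$-parallel combination), and we want to choose $q$ with $\Re(q)>\Delta$ so that some iterate lands on the zero locus.

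The key steps, in order: (1) Write down precisely the one-variable rational map $f_q$ governing the recursion for leaf joined trees of bounded degree $\Delta$, isolating the role of $q$ and $\Delta$; this is a routine but careful series-parallel computation, essentially $y \mapsto g_q\!\big(\prod_{i} h_q(y_i)\big)$ specialized to all $y_i$ equal. (2) Identify a candidate point $q$ with $\Re(q) = \Delta + \epsilon$ for small $\epsilon>0$ and show that at this $q$ the map $f_q$ has either an attracting/neutral cycle or, better, an orbit that is forced (by an explicit intermediate-value or winding argument in $\mathbb C$) to sweep across the value making $Z=0$. The cleanest route is probably: show $f_q$ has a fixed point $y^*$ with $|f_q'(y^*)|>1$ (repelling) while simultaneously the ``$Z=0$'' value lies in the immediate basin structure, so that by taking $k$ large and perturbing $q$ slightly one lands exactly on a zero — mirroring how Theorems \ref{thm:q>32/27} and \ref{thm:Re(q)>3/2} are proved via the ``escape'' criterion of Theorem \ref{thm:to escape or not to escape}. (3) Verify that the required $q$ genuinely has real part exceeding $\Delta$; this is where the asymptotics in $\Delta$ enter — one expands $f_q$ and its multiplier for $q = \Delta(1 + o(1))$ and checks the relevant inequality holds once $\Delta \geq \Delta_0$. (4) Unwind the perturbation: since the set of chromatic zeros is closed under the construction and the escape condition is open, an exact zero $q'$ with $\Re(q')>\Delta$ is obtained for some finite depth.

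I expect step (2)–(3) to be the main obstacle: one must exhibit, for all large $\Delta$, a \emph{single} value of $q$ (or a short arc of values) with $\Re(q)>\Delta$ at which the degree-$\Delta$ iteration provably produces a zero, and control the error terms in $\Delta$ uniformly. Naively the fixed points of $f_q$ cluster near $q$ itself (so near real part $\Delta$), and one needs the extra push — coming from the ``$+1$'' type corrections in the series-parallel combination and from choosing $q$ with a small but nonzero imaginary part — to move a zero past the line $\Re(q)=\Delta$; making this quantitative, and ruling out that the zeros are merely \emph{asymptotically} at real part $\Delta$ without crossing, is the delicate part. A secondary technical point is handling the degree bookkeeping: the root $v$ may have degree different from $\Delta$ and the identified leaf vertex $w$ is the one allowed to have large degree, so one must confirm the construction respects ``all vertices but one have degree $\leq \Delta$'' — this dictates exactly which vertex gets identified and forces $T$ to have maximum degree $\Delta$ with the blow-up concentrated at $w$.
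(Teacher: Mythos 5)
Your proposal sets up the right framework (the two-coordinate $(Z^{\mathrm{same}},Z^{\mathrm{dif}})$ recursion, the ratio, and the hope of applying a Montel/escape argument as in Theorems~\ref{thm:q>32/27} and~\ref{thm:Re(q)>3/2}), but it leaves the actual content of the theorem unproved. The entire difficulty is your step (2)--(3): exhibiting, for every large $\Delta$, a specific $q$ with $\Re(q)>\Delta$ at which the iteration provably escapes or the family of ratios is provably non-normal. You explicitly flag this as ``the main obstacle'' and ``the delicate part'' and then offer only a heuristic (``the cleanest route is probably'' a repelling fixed point with the zero value in its basin) with no mechanism that pushes the active parameter past the line $\Re(q)=\Delta$. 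This is not a routine estimate one can defer: Sokal's conjecture is exactly the assertion that no such $q$ exists, and it is \emph{true} for the most natural candidates. The paper's proof does not proceed by a direct fixed-point/multiplier computation at all; it conjugates the leaf-joined-tree ratio recursion, via $\widetilde R=\tfrac{q-1}{q-2}R$ and the substitution $\lambda(q,d)=\tfrac{(q-1)^d}{(q-2)^{d+1}}$, into the occupation-ratio recursion $F_{\lambda,d}$ for the independence polynomial on bounded-degree trees (Proposition~\ref{prop:chromatic ratios}), and then imports a genuinely nontrivial external input: the result of~\cite{bencs2021limit} that the boundary of the limiting zero locus $\mathcal{U}_\infty$ has a $120$-degree corner at $e$, which is what forces the activity locus to protrude into the disk $B_{1/2}(1/2)$, i.e.\ into $\{\Re(q)\ge\Delta+1\}$ after the M\"obius change of variables $q=1+d/u$. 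Nothing in your outline substitutes for this input.

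Two further concrete problems. First, your plan to iterate the map ``specialized to all $y_i$ equal'' corresponds to regular Cayley trees, and in the independence-polynomial picture it is known (Buys~\cite{Buys}) that Cayley trees do \emph{not} realize the extremal activity; the paper's explicit examples in Table~\ref{tab:over_Delta} use spherically regular trees with two alternating down-degrees precisely for this reason, so the symmetric specialization is likely to fail to cross $\Re(q)=\Delta$. Second, invoking Theorem~\ref{thm:to escape or not to escape} as a black box is not admissible here: its proof manufactures zeros by taking $N$-fold series and $m$-fold parallel compositions, which destroys the degree bound $\Delta$. The paper instead applies Montel's theorem directly to the restricted family $\mathcal{T}_d$ and then handles the three cases $R(\hat T;q)\in\{0,-1,\infty\}$ by ad hoc modifications (adding an edge between the terminals, or putting one branch in parallel with an edge) that stay inside the class of leaf joined trees of maximum degree $\Delta$. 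You note the degree bookkeeping as a ``secondary technical point'' but do not resolve it, and as stated your route would not yield a graph of the required form.
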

The proof of this theorem, together with some explicit calculations, also allows us to find such chromatic zeros for $4\leq\Delta\leq45$. Table~\ref{tab:over_Delta} in Section~\ref{sec:questions} records values of $q$, which are accumulation points of chromatic zeros of leaf joined trees, corresponding with the given $\Delta$.

\subsection{Approach}
Very roughly the main tool behind the proofs of our results is to write the chromatic polynomial $Z(G;q)$ as the sum of two other polynomials $Z_1(G;q)+Z_2(G;q)$ which can be iteratively computed for all series-parallel graphs, see Section~\ref{sec:SP} for the precise definitions. We also define the rational function $R(G;q):=\frac{Z_1(G;q)}{Z_2(G;q)}$ and clearly $R(G;q)=-1$ implies $Z(G;0)=0$. A certain converse also holds under some additional conditions. 

To prove Theorem~\ref{thm:(0,32/27)} we essentially show that these rational functions avoid the value $-1$.
To prove presence of zeros we use that if the family of rational functions $\{q\mapsto R(G;q)\}$ behaves chaotically (formally, not being a \emph{normal family} near some parameter $q_0$, see Section~\ref{sec:active}), then one can use the celebrated Montel theorem from complex analysis to conclude that there must be a nearby value $q$ and a graph $G$ for which $Z(G,q)=0.$

Our approach to obtaining density of chromatic zeros is similar in spirit to Sokal's approach~\cite{Sokaldense}, but deviates from it in the use of Montel's theorem. 
Sokal uses Montel's `small' theorem to prove the Beraha-Kahane-Weis theorem~\cite{BKW78}, which he is able to apply to the generalized theta graphs because their chromatic polynomials can be very explicitly described.
It is not clear to what extent this applies to more complicated graphs.
Our use of Montel's theorem is however directly inspired by~\cite{de2021zeros}, which in turn builds on~\cite{PetersRegts,BGGS20,Buys}.
Our approach in fact also allows us to give a relatively short alternative proof for density of chromatic zeros of generalized theta graphs outside the disk $B_1(1)$, see Corollary~\ref{cor:|q-1|>1 theta}. 

Our proof of Theorem~\ref{thm:leaf joined} makes use of an observation of Sokal and Royle in the appendix of the arXiv version of~\cite{RoyleSokal} (see \texttt{https://arxiv.org/abs/1307.1721}), saying that a particular recursion for ratios of leaf joined trees is up to a conjugation exactly the recursion for ratios of independence polynomial on trees.
We make use of this observation to build on the framework of~\cite{de2021zeros} allowing us to utilize some very recent work~\cite{bencs2021limit} giving an accurate description of the location of the zeros of the independence polynomial for the family of graphs with a given maximum degree.

\subsection*{Organization}
The next section deals with formal definitions of series-parallel graphs and ratios. We also collect several basic properties there that are used in later sections.
Section~\ref{sec:absence} is devoted to proving Theorem~\ref{thm:(0,32/27)}. In Section~\ref{sec:active} we state a general theorem allowing us to derive various results on presence of chromatic zeros for series-parallel graphs. Finally in Section~\ref{sec:leaf joined} we prove Theorem~\ref{thm:leaf joined}.
We end the paper with some questions in Section~\ref{sec:questions}

\section{Recursion for ratios of series-parallel graphs}\label{sec:SP}
We start with some standard definitions needed to introduce, and set up some terminology for series-parallel graphs.
We follow Royle and Sokal~\cite{RoyleSokal} in their use of notation.

Let $G_1$ and $G_2$ be two graphs with designated start- and endpoints $s_1,t_1$, and $s_2,t_2$ respectively, referred to as \emph{two-terminal graphs}.
The \emph{parallel composition} of $G_1$ and $G_2$ is the graph $G_1\parallel G_2$ with designated start- and endpoints $s,t$ obtained from the disjoint union of $G_1$ and $G_2$ by identifying $s_1$ and $s_2$ into a single vertex $s$ and by identifying $t_1$ and $t_2$ into a single vertex $t.$
The \emph{series composition} of $G_1$ and $G_2$ is the graph $G_1\bowtie G_2$ with designated start- and endpoints $s,t$ obtained from the disjoint union of $G_1$ and $G_2$ by identifying $t_1$ and $s_2$ into a single vertex and by renaming $s_1$ to $s$ and $t_2$ to $t$. Note that the order matters here.
A two-terminal graph $G$ is called \emph{series-parallel} if it can be obtained from a single edge using series and parallel compositions. 
From now on we will implicitly assume the presence of the start- and endpoints when referring to a two-terminal graph $G$.
We denote by $\mathcal{G}_{\mathrm{SP}}$ the collection of all series-parallel graphs and by $\mathcal{G}^*_{\mathrm{SP}}$ the collection of all series-parallel graphs $G$ such that the vertices $s$ and $t$ are not connected by an edge.

Recall that for a positive integer $q$ and a graph $G=(V,E)$ we have
\[
Z(G;q)=\sum_{\phi:V\to \{1,\ldots,q\}} \prod_{uv\in E}(1-\delta_{\phi(u),\phi(v)}),
\]
where $\delta_{i,j}$ denotes the Kronecker delta.
For a positive integer $q$ and a two-terminal graph $G$, we can thus write\footnote{This can be seen to be the deletion-contraction relation for $G\parallel K_2$ with $Z^{\mathrm{dif}}(G;q)=Z(G\parallel K_2;q)$.},
\begin{equation}\label{eq:same dif}
Z(G;q)=Z^{\mathrm{same}}(G;q)^{}+Z^{\mathrm{dif}}(G;q),
\end{equation}
where $Z^{\mathrm{same}}(G;q)$ collects those contribution where $s,t$ receive the same color and where $Z^{\mathrm{dif}}(G;q)$ collects those contribution where $s,t$ receive the distinct colors. 
Since $Z^{\mathrm{dif}}(G;q)$ is equal to $Z(G\parallel K_2;q)$, where $K_2$ denotes an edge, 
both these terms are polynomials in $q$. Therefore \eqref{eq:same dif} also holds for any $q\in \mathbb{C}$.

We next collect some basic properties of $Z$, $Z^{\mathrm{same}}$ and $Z^{\mathrm{dif}}$ under series and parallel compositions in the lemma below. 
They can for example also be found in~\cite{Sokaldense}.
\begin{lemma}\label{lem:basic}
Let $G_1$ and $G_2$ be two two-terminal graphs and let us denote by $K_2$ an edge. Then we have the following identities:
\begin{itemize}
 \item[(P1)]   $Z^{\mathrm{dif}}(G;q) = Z(G \parallel K_2 ;q)$,
    \item[(P2)] $Z^{\mathrm{same}}(G_1 \bowtie G_2 ;q) = Z(G_1 \parallel G_2 ;q)$,
  \item[(P3)]  $Z(G_1 \bowtie G_2 ;q) = \tfrac{1}{q} \cdot Z(G_1 ;q) \cdot Z(G_2 ;q)$,
   \item[(P4)] $Z^{\mathrm{same}}(G_1 \parallel G_2 ;q) = \tfrac{1}{q}\cdot Z^{\mathrm{same}}(G_1 ;q) \cdot Z^{\mathrm{same}}(G_2 ;q)$,
  \item[(P5)]  $Z^{\mathrm{dif}}(G_1 \parallel G_2 ;q) = \tfrac{1}{q(q-1)}\cdot Z^{\mathrm{dif}}(G_1 ;q) \cdot Z^{\mathrm{dif}}(G_2 ;q)$,
  \item[(P6)]  $Z^{\mathrm{same}}(G_1 \bowtie G_2 ;q)= \tfrac{1}{q}\cdot Z^{\mathrm{same}}(G_1 ;q) \cdot Z^{\mathrm{same}}(G_2 ;q)+\tfrac{1}{q(q-1)}\cdot Z^{\mathrm{dif}}(G_1 ;q) \cdot Z^{\mathrm{dif}}(G_2 ;q)$,
  \item[(P7)] $Z^{\mathrm{dif}}(G_1 \bowtie G_2 ;q) =\tfrac{1}{q}\cdot Z^{\mathrm{same}}(G_1;q)\cdot Z^{\mathrm{dif}}(G_2;q)+\tfrac{1}{q}\cdot Z^{\mathrm{dif}}(G_1;q)\cdot Z^{\mathrm{same}}(G_2;q)$\\
    $+\tfrac{q-2}{q(q-1)}\cdot Z^{\mathrm{dif}}(G_1;q)\cdot Z^{\mathrm{dif}}(G_2;q).$
\end{itemize}
\end{lemma}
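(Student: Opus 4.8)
The plan is to prove all seven identities simultaneously by passing to transfer matrices. Since every quantity appearing in (P1)--(P7) is a polynomial in $q$ (for $Z$ this is classical, and for $Z^{\mathrm{same}}$ and $Z^{\mathrm{dif}}$ it follows from $Z^{\mathrm{dif}}(G;q)=Z(G\parallel K_2;q)$ together with \eqref{eq:same dif}, as noted above), it suffices to establish each identity for every positive integer $q$, where we may use the proper-coloring interpretation of $Z$. Fix such a $q$ and, for a two-terminal graph $G$ with terminals $s,t$, let $M_G$ be the $q\times q$ matrix whose $(i,j)$ entry is the number of proper $q$-colorings $\phi$ of $G$ with $\phi(s)=i$ and $\phi(t)=j$. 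The structural facts I would record are: (a) permuting colors shows $M_G$ is invariant under conjugation by permutation matrices, hence $M_G=\alpha_G I+\beta_G(J-I)$ for scalars $\alpha_G,\beta_G$, where $J$ is the all-ones matrix; in particular $M_G$ is symmetric; (b) $Z(G;q)=\mathbf 1^{\mathsf T}M_G\mathbf 1$, $Z^{\mathrm{same}}(G;q)=\operatorname{tr}(M_G)=q\alpha_G$, and $Z^{\mathrm{dif}}(G;q)=\sum_{i\neq j}(M_G)_{ij}=q(q-1)\beta_G$; (c) every row of $M_G$ sums to $Z(G;q)/q$, i.e. $M_G\mathbf 1=\tfrac1q Z(G;q)\mathbf 1$.

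Next I would record how $M_G$ behaves under the two compositions. If $G_1\bowtie G_2$ is built by identifying $t_1$ with $s_2$, then a proper coloring of $G_1\bowtie G_2$ is the same thing as a pair of proper colorings of $G_1$ and $G_2$ agreeing on the identified vertex, and these choices are independent because $G_1$ and $G_2$ share only that vertex; summing over its color gives $M_{G_1\bowtie G_2}=M_{G_1}M_{G_2}$. Similarly, in $G_1\parallel G_2$ one identifies $s_1\sim s_2$ and $t_1\sim t_2$, so once the colors $i,j$ of the two terminals are fixed the colorings of $G_1$ and $G_2$ are chosen independently, giving the Hadamard (entrywise) product $M_{G_1\parallel G_2}=M_{G_1}\circ M_{G_2}$. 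Finally $M_{K_2}=J-I$.

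With these in hand the identities become short computations. (P1): $M_{G\parallel K_2}=M_G\circ(J-I)$ kills the diagonal of $M_G$, so $\mathbf 1^{\mathsf T}M_{G\parallel K_2}\mathbf 1=\sum_{i\neq j}(M_G)_{ij}=Z^{\mathrm{dif}}(G;q)$. (P3): $\mathbf 1^{\mathsf T}M_{G_1}M_{G_2}\mathbf 1=\tfrac1q Z(G_2;q)\,\mathbf 1^{\mathsf T}M_{G_1}\mathbf 1=\tfrac1q Z(G_1;q)Z(G_2;q)$ by (c). (P2): $Z^{\mathrm{same}}(G_1\bowtie G_2;q)=\operatorname{tr}(M_{G_1}M_{G_2})=\sum_{i,k}(M_{G_1})_{ik}(M_{G_2})_{ki}=\sum_{i,k}(M_{G_1})_{ik}(M_{G_2})_{ik}$, using symmetry of $M_{G_2}$, and this last sum equals $\mathbf 1^{\mathsf T}(M_{G_1}\circ M_{G_2})\mathbf 1=Z(G_1\parallel G_2;q)$. (P4) and (P5): from $M_{G_1\parallel G_2}=M_{G_1}\circ M_{G_2}$ one gets $\operatorname{tr}(M_{G_1\parallel G_2})=\sum_i\alpha_{G_1}\alpha_{G_2}=q\alpha_{G_1}\alpha_{G_2}$ and $\sum_{i\neq j}(M_{G_1\parallel G_2})_{ij}=q(q-1)\beta_{G_1}\beta_{G_2}$, which become (P4) and (P5) via (b). (P6) then follows from (P2), \eqref{eq:same dif} applied to $G_1\parallel G_2$, and (P4), (P5); and (P7) follows from (P3), \eqref{eq:same dif} applied to $G_1$, $G_2$ and to $G_1\bowtie G_2$, and (P6), using $\tfrac1q-\tfrac1{q(q-1)}=\tfrac{q-2}{q(q-1)}$.

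There is no genuinely hard step here; the two points that deserve a sentence of care are that parallel composition, or composition with $K_2$, may create parallel edges, so one should note that the subgraph-expansion definition of $Z$ still computes the number of proper colorings of the resulting multigraph (a loop would force $Z\equiv0$, but series and parallel composition of simple two-terminal graphs never produces a loop), which is what makes the positive-integer identities genuine polynomial identities; and that the collapse in (P2) of the series-composition trace into the parallel-composition sum uses precisely the symmetry $M_{G_2}=M_{G_2}^{\mathsf T}$ from fact (a). Each of (P1)--(P7) could alternatively be proved by a direct counting argument, but the transfer-matrix bookkeeping handles all of them uniformly.
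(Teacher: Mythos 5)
Your proof is correct and complete. The paper itself does not actually prove Lemma~\ref{lem:basic}; it only cites \cite{Sokaldense}, where these identities are obtained in essentially the same way you propose, namely via the $q\times q$ transfer matrix of the two-terminal model, which color symmetry collapses to the two-parameter form $\alpha_G I+\beta_G(J-I)$ exactly as in your fact (a). So your argument is best described not as a different route but as a self-contained write-up of the standard one, and the two subtleties you single out are precisely the ones that need a sentence: the subgraph-expansion definition of $Z$ for a multigraph still counts proper colorings (so the parallel edges created by $\parallel$ are harmless, and no loops can arise), and (P2) genuinely uses $M_{G_2}=M_{G_2}^{\mathsf T}$, since $\operatorname{tr}(M_{G_1}M_{G_2})$ a priori counts colorings of $G_1$ glued to the \emph{reversal} of $G_2$ rather than of $G_1\parallel G_2$. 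One cosmetic point: to promote the integer evaluations to identities of polynomials you should first clear the denominators $q$ and $q(q-1)$ and compare at infinitely many integers $q\geq 3$; the stated forms with $\tfrac1q$ and $\tfrac1{q(q-1)}$ are then identities of rational functions, and in fact of polynomials, since $q$ divides $Z(G;q)$ and $q(q-1)$ divides $Z^{\mathrm{dif}}(G;q)=Z(G\parallel K_2;q)$ for every two-terminal graph $G$. With that said, every step of your derivation of (P1)--(P7) from the multiplicativity $M_{G_1\bowtie G_2}=M_{G_1}M_{G_2}$, $M_{G_1\parallel G_2}=M_{G_1}\circ M_{G_2}$ and the bookkeeping in (b), (c) checks out, including the algebra $\tfrac1q-\tfrac1{q(q-1)}=\tfrac{q-2}{q(q-1)}$ in (P7).
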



An important tool in our analysis of absence/presence of complex zeros is the use of the \emph{ratio} defined as
\begin{equation}\label{eq:ratio}
R(G;q):=\frac{Z^{\mathrm{same}}(G;q)}{Z^{\mathrm{dif}}(G;q)},
\end{equation}
which we view as a rational function in $q$.
We note that in case $G$ contains an edge between $s$ and $t$, the rational function $q\mapsto R(G;q)$ is constantly equal to $0$.
We observe that if $R(G;q)=-1$, then $Z(G;q)=0$ and the converse holds provided $Z^{\mathrm{dif}}(G;q)\neq 0$.

The next lemma provides a certain strengthening of this observation for series-parallel graphs.
\begin{lemma}\label{lem:Z=0 R-=-1}
Let $q\in \mathbb{C}\setminus \{0,1,2\}.$ Then the following are equivalent
\begin{itemize}
    \item[(i)] $Z(G;q)=0$ for some $G\in \mathcal{G}_{\mathrm{SP}}$,
    \item[(ii)] $R(G;q) = -1$ for some $G\in \mathcal{G}^*_{\mathrm{SP}}$,
    \item[(iii)] $R(G;q)\in\{0,-1,\infty\}$ for some $G\in \mathcal{G}^*_{\mathrm{SP}}$.
\end{itemize}
\end{lemma}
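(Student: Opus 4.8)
The plan is to establish the cycle of implications $(i)\Rightarrow(iii)\Rightarrow(ii)\Rightarrow(i)$, where only the last implication is essentially immediate. For $(ii)\Rightarrow(i)$: if $R(G;q)=-1$ for some $G\in\mathcal{G}^*_{\mathrm{SP}}$, then since $G$ has no edge between $s$ and $t$ we have $Z^{\mathrm{dif}}(G;q)\neq 0$ as a polynomial identity is not what we need — rather we should argue that $Z^{\mathrm{dif}}(G;q)\neq0$ at this particular $q$. Actually the cleaner route is: $R(G;q)=-1$ means $Z^{\mathrm{same}}(G;q)=-Z^{\mathrm{dif}}(G;q)$ with $Z^{\mathrm{dif}}(G;q)\neq0$ (otherwise the ratio is $0$ or $\infty$, not $-1$), hence by \eqref{eq:same dif} $Z(G;q)=0$, giving $(i)$. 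The implication $(iii)\Rightarrow(ii)$ is where series-parallel structure must be exploited: I would show that from a graph $G\in\mathcal{G}^*_{\mathrm{SP}}$ with $R(G;q)\in\{0,-1,\infty\}$ one can build another graph $G'\in\mathcal{G}^*_{\mathrm{SP}}$ with $R(G';q)=-1$. The key tool is Lemma~\ref{lem:basic}: under parallel composition the ratios transform by an explicit Möbius-type rule, and under series composition by another. Concretely, using (P4) and (P5), $R(G_1\parallel G_2;q) = \tfrac{q-1}{q}R(G_1;q)R(G_2;q)$, and using (P2), (P7) one gets a formula for $R(G_1\bowtie G_2;q)$ in terms of $R(G_1;q),R(G_2;q)$. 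One then checks that the values $0$ and $\infty$ can be moved to $-1$: for instance, if $R(G;q)=\infty$ (i.e. $Z^{\mathrm{dif}}(G;q)=0$ but $Z^{\mathrm{same}}(G;q)\neq0$), compose $G$ in series or parallel with a small gadget (a single edge, or $K_2$ itself, or a path of length two) to land on a graph in $\mathcal{G}^*_{\mathrm{SP}}$ whose ratio at $q$ equals $-1$; the exclusions $q\notin\{0,1,2\}$ are exactly what is needed to keep the relevant prefactors $\tfrac1q,\tfrac1{q-1},\tfrac{q-2}{q(q-1)}$ finite and nonzero and to keep the composition formulas well-defined.

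For $(i)\Rightarrow(iii)$: given $G\in\mathcal{G}_{\mathrm{SP}}$ with $Z(G;q)=0$, I want to produce $G'\in\mathcal{G}^*_{\mathrm{SP}}$ with $R(G';q)\in\{0,-1,\infty\}$. If $G$ itself has no $s$–$t$ edge, then $G\in\mathcal{G}^*_{\mathrm{SP}}$ and $0=Z(G;q)=Z^{\mathrm{same}}(G;q)+Z^{\mathrm{dif}}(G;q)$; if $Z^{\mathrm{dif}}(G;q)\neq0$ then $R(G;q)=-1$ and we are done, while if $Z^{\mathrm{dif}}(G;q)=0$ then also $Z^{\mathrm{same}}(G;q)=0$, so $R(G;q)$ is the indeterminate $0/0$ — here I would instead pass to a subgraph or a modified graph realizing the $\infty$ (or $0$) case, again via Lemma~\ref{lem:basic}. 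If $G$ does have an $s$–$t$ edge, write $G = H \parallel K_2$ for some $H\in\mathcal{G}^*_{\mathrm{SP}}$ (this uses that $G$ is series-parallel and has such an edge; one has to argue this decomposition exists — it follows from $(P1)$ since $Z^{\mathrm{dif}}(H;q)=Z(H\parallel K_2;q)=Z(G;q)$), and then $Z^{\mathrm{dif}}(H;q)=Z(G;q)=0$, so $R(H;q)=\infty$ (if $Z^{\mathrm{same}}(H;q)\neq0$) or $R(H;q)=0/0$. In the genuinely degenerate sub-case where numerator and denominator of the ratio both vanish, I expect to need a separate combinatorial argument: peel off the last composition in a series-parallel decomposition of $G$ and use the recursion formulas for $Z^{\mathrm{same}},Z^{\mathrm{dif}}$ to locate an intermediate two-terminal graph at which exactly one of the two vanishes. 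This bookkeeping — tracking when $Z^{\mathrm{same}}$ and $Z^{\mathrm{dif}}$ vanish simultaneously and ensuring one can always reach the set $\{0,-1,\infty\}$ with a graph that still lies in $\mathcal{G}^*_{\mathrm{SP}}$ — is the part I expect to be the main obstacle; everything else is a direct unwinding of Lemma~\ref{lem:basic} together with the hypothesis $q\notin\{0,1,2\}$.

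The remaining routine verifications are: (a) derive the two composition formulas for $R$ from (P1)–(P7) (a short calculation), (b) check that each "gadget composition" used to normalize a value in $\{0,\infty\}$ to $-1$ indeed keeps us in $\mathcal{G}^*_{\mathrm{SP}}$ and is valid for $q\notin\{0,1,2\}$, and (c) confirm the base cases (e.g. what $R$ is for a single edge, for a path of length two, for two parallel edges) so that the gadgets have the claimed ratios. I would present (a) as a displayed computation early in the proof and then use it repeatedly; the structure of the argument is the implication cycle above, with the hypothesis $q\notin\{0,1,2\}$ invoked precisely at the points where denominators $q$, $q-1$, or $q-2$ would otherwise cause trouble.
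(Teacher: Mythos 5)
There is a genuine gap, and it sits exactly where you predicted the difficulty would be. Your proposed cycle routes the hard work through ``(iii) $\Rightarrow$ (ii)'', to be accomplished by composing a graph of ratio $0$ or $\infty$ with a small gadget so as to land on ratio $-1$. No such gadget exists. Writing $y=(q-1)R$ for the effective interaction, the composition rules are $y_{G_1\parallel G_2}=y_{G_1}y_{G_2}$ and $y_{G_1\bowtie G_2}=\frac{y_1y_2+q-1}{y_1+y_2+q-2}$. If $R(G;q)=0$ then $y_G=0$, any parallel composition again has $y=0$, and a series composition with a gadget $H$ gives $y=\frac{q-1}{y_H+q-2}$, which equals $1-q$ (ratio $-1$) only if $y_H=1-q$, i.e.\ only if the gadget already has ratio $-1$ --- circular. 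If $R(G;q)=\infty$, i.e.\ $Z^{\mathrm{dif}}(G;q)=0$, then every parallel composition still has $Z^{\mathrm{dif}}=0$ by (P5), and a series composition with an edge has $Z^{\mathrm{same}}=0$ by (P6), so you only shuttle between $0$ and $\infty$. The value $\infty$ naturally yields (i) (since $Z(G\parallel K_2;q)=Z^{\mathrm{dif}}(G;q)=0$), not (ii); this is why the paper closes the cycle as (iii) $\Rightarrow$ (i) and puts all the work into (i) $\Rightarrow$ (ii). (A small slip in the same vein: (P4)/(P5) give $R(G_1\parallel G_2;q)=(q-1)R(G_1;q)R(G_2;q)$, without your extra factor $\tfrac1q$.)

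The second, and decisive, gap is the degenerate case $Z^{\mathrm{same}}(G;q)=Z^{\mathrm{dif}}(G;q)=0$ in (i) $\Rightarrow$ (iii)/(ii), which you explicitly leave as ``the main obstacle.'' This case is the entire content of the lemma, and it cannot be dispatched by peeling off one composition: one must take $G$ with the \emph{fewest edges} among all series-parallel graphs with $Z(G;q)=0$ and show the degenerate case contradicts minimality, using (P2)--(P5) to exhibit a strictly smaller series-parallel graph vanishing at $q$. When $s$ and $t$ are joined by an edge an additional re-rooting trick is needed: writing $G-\{s,t\}$ as a series composition $G_1\bowtie G_2$, one realizes the same abstract graph as $(K_2\bowtie G_2^T)\parallel G_1\in\mathcal{G}^*_{\mathrm{SP}}$ (reversing the terminals of $G_2$), so that the ratio $-1$ is attained by a graph without an $s$--$t$ edge. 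Neither the extremal argument nor this re-rooting appears in your outline, so the proof as proposed does not go through.
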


\begin{proof}
Throughout the proof we will refer to the properties stated in Lemma~\ref{lem:basic} without explicitly mentioning the lemma each time.

We start with `(i) $\Rightarrow$ (ii)'.
Let $q$ be as in the statement of the lemma such that $Z(G;q)=0$ for some series-parallel graph $G\in \mathcal{G}_{\mathrm{SP}}$. 
Take such a graph $G$ with as few edges as possible.

By the discussion between equation~\eqref{eq:ratio} and the statement of the present lemma, we may assume that $Z^{\mathrm{dif}}(G;q)=0$, for otherwise $R(G;q)=-1$ (and hence $G\in \mathcal{G}^*_{\mathrm{SP}}$).
Then also $Z^{\mathrm{same}}(G;q)= 0$.

Suppose first that $s,t$ are not connected by an edge.
By minimality, (P3) and (P4), $G$ must be the parallel composition of two series-parallel graphs $G_1$ and $G_2$ such that, say $Z^\mathrm{same}(G_1,q)=0$ and $G_1$ is not $2$-connected, or in other words such that $G_1$ is a series composition of two smaller series-parallel graphs $G_1'$ and $G_1''$. 
By (P2) we have that $Z(G_1'\parallel G_1'';q)=0$.
This is a contradiction since $G_1'\parallel G_1''$ has fewer edges than $G$. We conclude that $R(G;q)=-1$ in this case.

Suppose next that $s$ and $t$ are connected by an edge. 
We shall show that we can find another series-parallel graph $\hat{G}\in \mathcal{G}^*_{\mathrm{SP}}$, that is isomorphic to $G$ as a graph (and hence has $q$ as zero of its chromatic polynomial) but not as two-terminal graph.
By the argument above we then have $R(\hat{G};q)=-1$.

Let $G'$ be obtained from $G$ by removing the edge $\{s,t\}$. 
Then by (P1) $Z^{\mathrm{dif}}(G';q)=Z(G;q)=0$.
If $Z^{\mathrm{same}}(G';q)=0$, then $Z(G';q)=0$, contradicting the minimality of $G$. Therefore $Z^{\mathrm{same}}(G';q)\neq 0$.
If $G'$ is the parallel composition of $G_1$ and $G_2$, then by (P5), \[Z^{\mathrm{dif}}(G_1;q)Z^{\mathrm{dif}}(G_2;q)=q(q-1)Z^{\mathrm{dif}}(G';q)=0,\] so there is a smaller graph, (namely $G_1\parallel K_2$ or $G_2\parallel K_2$), where $q$ is a zero, contradicting our choice of $G$.
Hence $G'$ is the series composition of two graphs $G_1$ and $G_2$. 
The graphs $G_1$ and $G_2$ cannot both be single edges, for otherwise $G$ would be a triangle and we excluded the values $q=0,1,2$.
So let us assume that $G_1$ is not a single edge. 
We will now construct $G$ in a different way as series-parallel graph. 
First switch the roles of $s_2$ and $t_2$ in $G_2$ and denote the resulting series-parallel graph by $G_2^T$. Then put $G^T_2$ in series with a single edge, and then put this in parallel with $G_1$. In formulas this reads as $\hat{G}:=(K_2 \bowtie G_2^T)\parallel G_1$. 
The resulting graph $\hat{G}$ is then isomorphic to $G$ (but not equal to $G$ as a two-terminal graph). 
In case $\hat{G}$ is not contained in $\mathcal{G}_{SP}^*$, then $G_1$ is also not in $\mathcal{G}_{SP}^*$. 
In that case let $G'_2$ be obtained from $G_2^T$ by first taking a series composition with an edge and then a parallel composition with an edge, that is, $G_2'=(K_2 \bowtie G_2^T)\parallel K_2$. 
We then have by (P1) and (P5),
\begin{align*}
Z(G;q)=Z(\hat{G};q)=Z^{\mathrm{dif}}(\hat{G};q)&=\tfrac{1}{q(q-1)}Z^{\mathrm{dif}}(G_1;q)Z^{\mathrm{dif}}(K_2 \bowtie G_2^T;q)
\\
&=\tfrac{1}{q(q-1)}Z(G_1;q)Z(G_2';q),
\end{align*}
So $q$ must be a zero of $Z(G_1;q)$, or of $Z(G_2';q)$. Because $G_1$ is not an edge, both $G_1$ and $G_2'$ contain fewer edges than $G$ contradicting the choice of $G$.
Hence we conclude that $\hat{G}$ is contained in  $\mathcal{G}_{SP}^*$, finishing the proof of the first implication.

The implication `(ii) $\Rightarrow$ (iii)' is obvious.
So it remains to show `(iii) $\Rightarrow$ (i)'. 

To this end suppose that $R(G;q)\in \{-1,0,\infty\}$ for some series-parallel graph $G\in \mathcal{G}^*_{\mathrm{SP}}$.
If the ratio equals $-1$, then clearly $Z(G;q)=0$. 
So let us assume that the ratio equals $0$.
Then $Z^{\mathrm{same}}(G;q)=0$ and we may assume that $Z^{\mathrm{dif}}(G;q)\neq 0$. 
Let us take such a graph $G$ with the smallest number of edges.
By minimality, $G$ cannot arise as the parallel composition of two series-parallel graphs $G_1$ and $G_2$ by (P4) and (P5). 
Therefore $G$ must be equal to the series composition of two series-parallel graphs $G_1$ and $G_2$.
Now, as in the proof of `(i) $\Rightarrow$ (ii)', identify vertices $s$ and $t$ of $G$ to form a new series-parallel graph $G'$, such that $Z(G';q)=Z^{\mathrm{same}}(G;q)=0$.

Let us finally consider the case that the ratio is equal to $\infty$.
In this case $Z^{\mathrm{dif}}(G;q)=0$.
Then by (P1), $Z(G\parallel K_2;q)=Z^{\mathrm{dif}}(G;q)=0$ and we are done.
\end{proof}

We next provide a description of the behavior of the ratios under the series and parallel compositions.
To simplify the calculations, we will look at the modified ratio 
\begin{equation}\label{eq:def effective}
y_G(q):=(q-1)R(G;q),
\end{equation}
which, loosely following Sokal~\cite{Sokaldense}, we call the \emph{effective edge interaction}. 
\begin{remark}\label{rem:no weird things can happen}
Observe that $y_G(q)$ cannot be equal to any of the functions $q\mapsto -1,q\mapsto \infty$ and $q\mapsto 1-q$, since the numerator, $(q-1)Z^{\mathrm{same}}(G;q)$, and the denominator, $Z^{\mathrm{dif}}(G;q)$, have the same degree and leading coefficient, unless $G$ has an edge connecting $s$ and $t$, in which case $y_G(q)$ is the constant $0$ function.
\end{remark}

Given $q_0\in \mathbb{C}$ define
\begin{equation}
 \mathcal{E}(q_0):=\{y_G(q_0)\mid G \in \mathcal{G}_{SP}\},   
\end{equation}
the set of all values of the effective edge interaction at $q_0$ for the family of series-parallel graphs as a subset of the Riemann sphere, $\hat{\mathbb{C}}=\mathbb{C}\cup \{\infty\}$.
As an example note that $0\in \mathcal{E}(q_0)$ for any $q_0$, being the effective edge interaction of a single edge.

For any $q\neq 0$ define the following M\"obius transformation\footnote{Readers familiar with the Tutte polynomial will recognize this formula as expressing the $x$-coordinate from the $y$-coordinate (or the other way around) on the hyperbola $(x-1)(y-1)=q$, on which, for positive integer $q$, the Tutte polynomial corresponds to the $q$-state Potts model partition function. See e.g.~\cite{Sokalsurvey} for more on the connection between the Tutte polynomial and the Potts model.} 
\[
y\mapsto f_q(y):=1+\frac{q}{y-1}\]
and note that $f_q$ is an involution, i.e. $f_q(f_q(y))=y$ for all $y$.

The next lemma captures the behavior of the effective edge interactions under series and parallel compositions and can be easily derived from Lemma~\ref{lem:basic}.
\begin{lemma}\label{lem:formulas effective}
Let $G_1,G_2$ be two two-terminal graphs. Then 
\begin{align*}
y_{G_1\parallel G_2}&=y_{G_1}y_{G_2},
\\
y_{G_1\bowtie G_2}&=f_q(f_q(y_{G_1})f_q(y_{G_2})).
 \end{align*}
Moreover, for any fixed $q_0\in \mathbb{C}$, if $\{y_{G_1}(q_0),y_{G_2}(q_0)\}\neq \{0,\infty\}$, then
\[
y_{G_1\parallel G_2}(q_0)=y_{G_1}(q_0)y_{G_2}(q_0),
\]
and if $ \{y_{G_1}(q_0),y_{G_2}(q_0)\} \neq \{1,1-q_0\}$, then
\[
y_{G_1\bowtie G_2}(q_0)=f_{q_0}(f_{q_0}(y_{G_1}(q_0))f_{q_0}(y_{G_2}(q_0))).
\]
\end{lemma}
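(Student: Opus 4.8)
The plan is to derive both identities directly from the properties (P1)--(P7) in Lemma~\ref{lem:basic} by plugging in the definitions $R(G;q)=Z^{\mathrm{same}}(G;q)/Z^{\mathrm{dif}}(G;q)$ and $y_G=(q-1)R(G;q)$, treating everything first as an identity of rational functions in $q$ and only afterwards worrying about the exceptional evaluation points. For the parallel composition this is immediate: by (P4) and (P5),
\[
R(G_1\parallel G_2;q)=\frac{Z^{\mathrm{same}}(G_1\parallel G_2;q)}{Z^{\mathrm{dif}}(G_1\parallel G_2;q)}=\frac{\tfrac1q Z^{\mathrm{same}}(G_1)Z^{\mathrm{same}}(G_2)}{\tfrac{1}{q(q-1)}Z^{\mathrm{dif}}(G_1)Z^{\mathrm{dif}}(G_2)}=(q-1)R(G_1;q)R(G_2;q),
\]
so multiplying by $(q-1)$ gives $y_{G_1\parallel G_2}=y_{G_1}y_{G_2}$ as rational functions.

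For the series composition I would first record the useful reformulation of $f_q$ in terms of $Z^{\mathrm{same}}$ and $Z^{\mathrm{dif}}$: writing $y=(q-1)Z^{\mathrm{same}}/Z^{\mathrm{dif}}$ one checks $f_q(y)=1+\tfrac{q}{y-1}=\tfrac{y-1+q}{y-1}$, and substituting $y-1=\tfrac{(q-1)Z^{\mathrm{same}}-Z^{\mathrm{dif}}}{Z^{\mathrm{dif}}}$ together with $Z^{\mathrm{same}}+Z^{\mathrm{dif}}=Z$ turns $f_q(y_G)$ into a clean ratio of the polynomials $Z^{\mathrm{same}}(G)$, $Z^{\mathrm{dif}}(G)$ and $Z(G)$. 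Then I would compute $f_q(y_{G_1})f_q(y_{G_2})$, apply $f_q$ once more, and compare with $y_{G_1\bowtie G_2}=(q-1)Z^{\mathrm{same}}(G_1\bowtie G_2)/Z^{\mathrm{dif}}(G_1\bowtie G_2)$, where the numerator and denominator are expanded via (P6) and (P7). This is a finite rational-function manipulation; the point is that (P6) is exactly the bilinear form that the double application of the involution $f_q$ produces, and (P7) is its ``difference-color'' companion. Alternatively, and more cleanly, one can observe that the series composition corresponds on the Tutte/Potts hyperbola $(x-1)(y-1)=q$ to ordinary multiplication of the $x$-coordinates $x_{G_i}=f_q(y_{G_i})$ (this is the classical ``edges in series multiply on the other side of the hyperbola'' fact, and is why the footnote about $f_q$ is placed there), so that $x_{G_1\bowtie G_2}=x_{G_1}x_{G_2}$, i.e. $f_q(y_{G_1\bowtie G_2})=f_q(y_{G_1})f_q(y_{G_2})$, and applying the involution $f_q$ to both sides yields the claimed formula.

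The main obstacle is not the algebra but the bookkeeping of degenerate evaluations: the identities hold as rational functions in $q$, but at a fixed $q_0$ the quantities $y_{G_i}(q_0)$ may be $0$ or $\infty$, and $f_{q_0}$ has a pole at $1$ and sends $\infty\mapsto 1$, so products like $0\cdot\infty$ or $f_{q_0}(1)$ can arise. For the parallel case, $y_{G_1\parallel G_2}(q_0)=y_{G_1}(q_0)y_{G_2}(q_0)$ can only fail to make sense when $\{y_{G_1}(q_0),y_{G_2}(q_0)\}=\{0,\infty\}$, which is precisely the excluded case; otherwise the product is a well-defined point of $\hat{\mathbb C}$ and, since $Z^{\mathrm{same}}$ and $Z^{\mathrm{dif}}$ are honest polynomials, it agrees with the limiting value of the rational-function identity. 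For the series case one traces through which values of $(y_{G_1}(q_0),y_{G_2}(q_0))$ force an indeterminate expression in $f_{q_0}(f_{q_0}(y_{G_1}(q_0))f_{q_0}(y_{G_2}(q_0)))$; using Remark~\ref{rem:no weird things can happen} (so that, for instance, $y_{G_i}$ is never the constant $1-q$ unless there is an $s$--$t$ edge, in which case $y_{G_i}=0$) one checks that the only genuinely bad configuration is $\{y_{G_1}(q_0),y_{G_2}(q_0)\}=\{1,1-q_0\}$, matching the hypothesis of the lemma; in every other case both sides are defined and equal by continuity/evaluation of the rational-function identity. I would organize this final part as a short case analysis, invoking Remark~\ref{rem:no weird things can happen} to kill the configurations that Remark shows cannot occur.
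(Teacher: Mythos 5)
Your proposal is correct and follows essentially the same route as the paper: the parallel identity comes from (P4)--(P5), the series identity from verifying $f_q(y_{G_1\bowtie G_2})=f_q(y_{G_1})f_q(y_{G_2})$ via (P6)--(P7) and applying the involution, and the excluded sets $\{0,\infty\}$ and $\{1,1-q_0\}$ are exactly the indeterminate configurations ($0\cdot\infty$ products) at a fixed $q_0$. One tiny remark: Remark~\ref{rem:no weird things can happen} is really needed to ensure the rational-function identities themselves are well defined (e.g.\ that $f_q(y_{G_i})$ is not identically $0$, so the product of the two is not the indeterminate product of the constants $0$ and $\infty$), rather than to rule out pointwise values at $q_0$, but this does not affect the correctness of your argument.
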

We include a proof of the lemma for convenience of the reader.
\begin{proof}
First of all we note that the product $y_{G_1}y_{G_2}$ is always a well-defined rational function.
By Remark~\ref{rem:no weird things can happen}, $f_q(y_{G_i})$ cannot be constant $0$, but could be constant $\infty$. Therefore the product $f_q(y_{G_1})f_q(y_{G_2})$ could be constant $\infty$, but applying $f_q$ once more to it results again in a well-defined rational function.

The statements for the parallel connections follow directly from (P4) and (P5) from Lemma~\ref{lem:basic} and the definition of the effective edge interaction.
For the statements for the series connections let us denote $y_1=y_{G_1},y_2=y_{G_2}$ and $y_{\mathrm{ser}}=y_{G_1\bowtie G_2}$. 
We use (P6) and (P7) from Lemma~\ref{lem:basic} to write $y_{\mathrm{ser}}=\frac{y_1y_2+q-1}{y_1+y_2+q-2}$.
It is then not difficult to see that $f_q(y_{\mathrm{ser}})=f_q(y_1)f_q(y_2).$
Therefore, since $f_q$ is an involution, 
\begin{equation*}
y_{\mathrm{ser}}=f_q(f_q(y_{\mathrm{ser}}))=f_q(f_q(y_1)f_q(y_2)),
\end{equation*}
as desired.
The statements for the evaluation at a fixed value $q_0\in \mathbb{C}$ now follow directly.
\end{proof}

\begin{remark}
Note that this lemma allows us to compute the effective edge interaction of any series-parallel graph. 
For example, the effective edge interaction of the path on three vertices, $P_2$, can be computed as 
\[
y_{P_2}=y_{K_2\bowtie K_2}=f_q(f_q(0)^2)=f_q((1-q)^2)=\frac{q-1}{q-2}.
\]
\end{remark}



\section{Absence of zeros near (0,32/27)} \label{sec:absence}
In this section we prove Theorem~\ref{thm:(0,32/27)}.
In the proof we will use the following condition that guarantees absence of zeros and check this condition in three different regimes. We first need a few quick definitions.

For a set $S\subseteq \C$, denote $S^2:=\{s_1s_2\mid s_1,s_2\in S\}.$ 
For subsets $S,T$ of the complex plane, we use the notation $S\Subset T$ (and say $S$ is strictly contained in $T$) to say that the closure of $S$ is contained in the interior of $T$. 
For $r>0$ we define $B_r$ to be the closed disk of radius $r$ centered at $0$. 
\begin{lemma}\label{lem:condition for absence of zeros SP}
Let $q\in\C\setminus\{0,1,2\}$ and let $V\subseteq \C$ be a set satisfying: $0\in V$, $1-q\notin V^2$, $V^2\subseteq V$ and $f_q(f_q(V)^2)\subseteq V$. Then $Z(G;q)\neq0$ for all series-parallel graphs $G$.
\end{lemma}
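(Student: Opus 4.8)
The plan is to show that the set $\mathcal{E}(q) = \{y_G(q) \mid G \in \mathcal{G}_{\mathrm{SP}}\}$ is contained in the set $V$ (together with possibly the values $\infty$ and $1-q$, which we must handle with care), and then to conclude that $-1 \notin \mathcal{E}(q)$, since $-1 \in V^2 \subseteq V$ would be needed — wait, more precisely we want to rule out $y_G(q) = 1-q$, because $R(G;q) = -1$ is equivalent to $y_G(q) = (q-1)(-1) = 1-q$. So the real goal is: under the stated hypotheses, $1-q \notin \mathcal{E}(q)$, and then invoke Lemma~\ref{lem:Z=0 R-=-1} (the implication (i)$\Rightarrow$(ii), contrapositive) to conclude $Z(G;q) \neq 0$ for all series-parallel $G$.

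First I would set up an induction on the number of edges (equivalently, on the structure of the series-parallel decomposition) to prove the claim that $y_G(q) \in V \cup \{\infty\}$ for every series-parallel graph $G$, and in fact that whenever $y_G(q) \ne \infty$ it lies in $V$. The base case is a single edge $K_2$, for which $y_{K_2}(q) = 0 \in V$ by hypothesis. For the inductive step, a series-parallel graph $G$ on at least two edges is either a parallel composition $G_1 \parallel G_2$ or a series composition $G_1 \bowtie G_2$ of smaller series-parallel graphs. In the parallel case, Lemma~\ref{lem:formulas effective} gives $y_G(q) = y_{G_1}(q) y_{G_2}(q)$ (valid as long as $\{y_{G_1}(q), y_{G_2}(q)\} \ne \{0,\infty\}$); by induction each factor is in $V \cup \{\infty\}$, and I need to argue the product stays in $V \cup\{\infty\}$. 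If both are in $V$, then the product is in $V^2 \subseteq V$. If one of them is $\infty$ and the other is a nonzero element of $V$, the product is $\infty$; the only problematic subcase is $0 \cdot \infty$, which the lemma's hypothesis excludes, and one should check that this degenerate configuration corresponds exactly to $G$ having an edge between $s$ and $t$ together with... actually, one must check that when this excluded case arises the modified ratio $y_G$ is still governed correctly — here the remark that $y_G$ is identically $0$ when $s,t$ are joined by an edge is useful, and more care may be needed to track the rational-function identity versus its evaluation at $q$. In the series case, Lemma~\ref{lem:formulas effective} gives $y_G(q) = f_q(f_q(y_{G_1}(q)) f_q(y_{G_2}(q)))$, and since by induction $y_{G_1}(q), y_{G_2}(q) \in V \cup \{\infty\}$, one has $f_q(y_{G_i}(q)) \in f_q(V \cup \{\infty\}) = f_q(V) \cup \{1\}$; hence the product lies in $f_q(V)^2$ possibly times $1$, i.e. in $f_q(V)^2$ (using $1 \cdot f_q(V) $; one needs $f_q(V)^2$ to absorb the factor $1$, which requires $1 \in f_q(V)$ or equivalently $\infty \in V$ — here a slight subtlety, so I would instead observe $f_q(1) = \infty$, $0 \in V$, $f_q(0) = 1-q$, track things directly, or simply note that the hypothesis $f_q(f_q(V)^2) \subseteq V$ should be read as covering exactly the closures needed). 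Then $y_G(q) = f_q(\text{that product}) \in f_q(f_q(V)^2) \subseteq V$, unless the product was $\infty$, in which case $y_G(q) = f_q(\infty) = 1 \in V$ provided $1 \in V$ — again a boundary point to be pinned down; the cleaner statement is that $y_G(q) \in V \cup \{\infty, 1\}$ and one checks $1, \infty$ never equal $1-q$ for $q \notin \{0,1,2\}$.

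Having established $\mathcal{E}(q) \subseteq V \cup \{\infty\}$ (with the understanding that elements of $V$ are finite), I would then finish: suppose for contradiction $Z(G;q) = 0$ for some $G \in \mathcal{G}_{\mathrm{SP}}$. By Lemma~\ref{lem:Z=0 R-=-1}, since $q \notin \{0,1,2\}$, there is $G' \in \mathcal{G}^*_{\mathrm{SP}}$ with $R(G';q) = -1$, i.e. $y_{G'}(q) = (q-1)(-1) = 1-q$. But $1-q \notin V$ by hypothesis ($1-q \notin V^2$ and $1-q$ would have to lie in... actually the hypothesis is literally $1-q \notin V^2$; I'd want $1-q \notin V$, so either the intended reading is that since $0 \in V$, hmm, $V \subseteq V^2$ need not hold — so I should double check: perhaps the argument only ever needs to rule out $1-q$ appearing as a \emph{product} $y_{G_1}y_{G_2}$, i.e. the very last composition producing the zero is a parallel one, which is consistent with the structure of Lemma~\ref{lem:Z=0 R-=-1}'s proof where the minimal graph is built as a parallel composition). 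So the final step is: the graph $G' \in \mathcal{G}^*_{\mathrm{SP}}$ realizing $R(G';q) = -1$ can be taken to be a parallel composition $G_1 \parallel G_2$, whence $1 - q = y_{G'}(q) = y_{G_1}(q) y_{G_2}(q) \in V^2$, contradicting $1-q \notin V^2$.

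The main obstacle I anticipate is the careful bookkeeping of the degenerate cases in Lemma~\ref{lem:formulas effective} — namely when $\{y_{G_1}(q), y_{G_2}(q)\}$ equals $\{0,\infty\}$ (parallel) or $\{1, 1-q\}$ (series), where the clean composition formula for the \emph{evaluation} at $q$ may fail even though the identity of rational functions holds. One must verify that these degenerate evaluations still land in $V \cup \{\infty\}$, perhaps by falling back to the rational-function identity and a limiting argument, or by using Remark~\ref{rem:no weird things can happen} to note $y_G$ is never identically $1-q$ or $\infty$ (so the bad evaluation is isolated and can be obtained as a limit of good ones), or by a separate direct combinatorial argument. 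Getting the exact set-membership hypotheses ($V^2 \subseteq V$ versus $1-q \notin V^2$ versus whether $1, \infty \in V$) to line up precisely with what the induction produces is the delicate point; the topological hypotheses $S \Subset T$ flagged in the preamble (strict containment) suggest the actual proof wants a little room to run a limiting/continuity argument, which is probably how the degenerate cases get absorbed.
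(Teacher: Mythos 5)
Your overall strategy is the paper's: induct on the series--parallel structure to trap $\mathcal{E}(q)$ inside $V$, then invoke the implication (i)$\Rightarrow$(ii) of Lemma~\ref{lem:Z=0 R-=-1} to rule out ratio $-1$. But the proposal has a genuine gap: you prove the too-weak statement $\mathcal{E}(q)\subseteq V\cup\{\infty\}$, and the degenerate cases of Lemma~\ref{lem:formulas effective} that you flag as ``the main obstacle'' are never resolved. The missing idea is to strengthen the induction hypothesis to the conjunction $y_G(q)\in V$ \emph{and} $y_G(q)\neq 1-q$. With both clauses carried along, every difficulty you list disappears at once: since $V\subseteq\C$, no effective interaction is ever $\infty$, so the parallel degenerate case $\{0,\infty\}$ cannot occur and $y_{G_1}(q)y_{G_2}(q)\in V^2\subseteq V$; since neither $y_{G_i}(q)$ equals $1-q$, the series degenerate case $\{1,1-q\}$ cannot occur and $f_q(y_{G_i}(q))\neq 0$, so $f_q(y_{G_1}(q))f_q(y_{G_2}(q))\in f_q(V)^2$ and $y_G(q)\in f_q(f_q(V)^2)\subseteq V$. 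The second clause also propagates: in the parallel case $y_G(q)\neq 1-q$ because $1-q\notin V^2$, and in the series case because $f_q(y_G(q))=f_q(y_{G_1}(q))f_q(y_{G_2}(q))\neq 0=f_q(1-q)$ and $f_q$ is injective. No limiting or continuity argument is needed.

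Your proposed endgame is also flawed as stated: you correctly observe that $1-q\notin V^2$ does not by itself give $1-q\notin V$, but your patch --- that the graph $G'\in\mathcal{G}^*_{\mathrm{SP}}$ with $R(G';q)=-1$ ``can be taken to be a parallel composition'' --- is not justified by Lemma~\ref{lem:Z=0 R-=-1}; membership in $\mathcal{G}^*_{\mathrm{SP}}$ only means there is no edge between $s$ and $t$, and such a graph may well be a series composition (e.g.\ a path of length two). The correct way out is exactly the strengthened induction above, which yields $1-q\notin\mathcal{E}(q)$ directly for \emph{all} series--parallel graphs, whence no ratio equals $-1$ and Lemma~\ref{lem:Z=0 R-=-1} finishes the proof.
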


\begin{proof}
By Lemma~\ref{lem:Z=0 R-=-1} it suffices to show that the ratios avoid the point $-1$.
Or equivalently, since $q\neq 1$, that the effective edge interactions at $q$ avoid the point $1-q$.

We will do so by proving the following stronger statement:
\begin{equation}\label{eq:inclusion}
 \mathcal{E}(q)\subseteq V \text{ and } 1-q\notin \mathcal{E}(q).
\end{equation}
We show this by induction on the number of edges. 
The base case follows since $0\in V$ and $q\neq 1$.
Assume next that $y\in\mathcal{E}(q)\setminus\{0\}$ and suppose that $y$ is the effective edge interaction of some series-parallel graph $G$.
If $G$ is the parallel composition of two series-parallel graphs $G_1$ and $G_2$ with effective edge interactions $y_1$ and $y_2$ respectively, then, by induction, $y_1,y_2\in V$ and neither of them is equal to $1-q$. 
By Lemma~\ref{lem:formulas effective} and our assumption we have $y=y_1y_2\in V^2\subseteq V$. Since $1-q\notin V^2$, we also have that $y\neq 1-q$.
If $G$ is the series composition of two series-parallel graphs $G_1$ and $G_2$ with effective edge interactions $y_1$ and $y_2$ respectively, then, by induction, $y_1,y_2\in V$ and neither of them is equal to $1-q$. 
Therefore $f_q(y_i)\neq 0$ for $i=1,2$.
Then by Lemma~\ref{lem:formulas effective} and our assumption, $y=f_q(f_q(y_1)f_q(y_2))\in V$.
Moreover, $f_q(1-q)=0\neq f_q(y_1)f_q(y_2)=f_q(y)$. Therefore $y\neq 1-q$.
This shows~\eqref{eq:inclusion} and finishes the proof.

\end{proof}

Below we prove three lemmas allowing us to apply the previous lemma to different parts of the interval $(0,32/27)$. First we collect two useful tools.
For two complex numbers $a,b$ we denote by $C(a,b)$ the circle in the complex plane with the line segment between $a$ and $b$ as a diameter. In case $a=b$, $C(a,b)$ consists of the single point $\{a\}$.

\begin{lemma}\label{lem:real circles}
Let $q,r\in\R$, then the circle $C(r,f_q(r))$ is $f_q$-invariant.
\end{lemma}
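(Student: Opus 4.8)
The plan is to show that the Möbius involution $f_q$ maps the circle $C(r, f_q(r))$ to itself. Since $f_q$ is a Möbius transformation, it sends circles (on the Riemann sphere) to circles, so it suffices to check that it fixes the particular circle $C := C(r, f_q(r))$ setwise. A clean way to pin down a circle is by three points on it, or by exploiting symmetry. Here the key structural fact is that $f_q$ is an involution: $f_q \circ f_q = \mathrm{id}$. So $f_q(C)$ is again a circle, and it passes through $f_q(r)$ and $f_q(f_q(r)) = r$; that is, $f_q(C)$ is a circle through both $r$ and $f_q(r)$. That alone does not force $f_q(C) = C$ — infinitely many circles pass through two given points — so I need one more ingredient to nail it down.

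The extra ingredient I would use is reflection symmetry across the real axis, which is available precisely because $q, r \in \mathbb{R}$. Indeed, for real $q$ the coefficients of $f_q(y) = 1 + \frac{q}{y-1}$ are real, so $f_q$ commutes with complex conjugation: $\overline{f_q(y)} = f_q(\overline y)$. Since $r$ and $f_q(r)$ are real, the circle $C$ having the segment $[r, f_q(r)]$ as a diameter is symmetric under $y \mapsto \bar y$ (its center is the real number $(r + f_q(r))/2$ and it is the unique circle through $r$ and $f_q(r)$ that is perpendicular to the real axis at those two points — equivalently, its center lies on $\mathbb{R}$). Now $f_q(C)$ is a circle through $r$ and $f_q(r)$, and because $f_q$ commutes with conjugation and $C$ is conjugation-invariant, $f_q(C)$ is also conjugation-invariant, hence its center is real as well. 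But among all circles through the two distinct real points $r$ and $f_q(r)$, there is exactly one whose center is real — namely the one with $[r, f_q(r)]$ as diameter. Therefore $f_q(C) = C$.

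I should also dispose of the degenerate cases. If $r = f_q(r)$, i.e. $r$ is a fixed point of $f_q$, then $C(r, f_q(r)) = \{r\}$ is a single point and the claim is trivial. If $r = 1$ (the pole of $f_q$) then $f_q(r) = \infty$, and one interprets $C(r,\infty)$ as the vertical line through $r$ in $\hat{\mathbb{C}}$; the same conjugation-symmetry argument applies, noting that $f_q$ of a vertical real-axis-perpendicular line is again such a line through the images of two of its points. In all cases the mechanism is: $f_q$ preserves the class of conjugation-invariant generalized circles, it is an involution swapping $r$ and $f_q(r)$, and the diameter circle on a real segment is the unique conjugation-invariant circle through its two (real) endpoints.

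The main obstacle, such as it is, is purely one of rigor in the ``uniqueness'' step — arguing carefully that among generalized circles (circles or lines on $\hat{\mathbb{C}}$) through two distinct real points, the conjugation-invariant one is unique and equals the diameter circle — and handling $\infty$ uniformly. One can avoid even this by an alternative, more computational route: parametrize $C$ by the equation $|y - c| = \rho$ with $c = (r+f_q(r))/2 \in \mathbb{R}$, $\rho = |r - f_q(r)|/2$, substitute $y = f_q(w)$, and verify the resulting locus in $w$ is the same circle; but the symmetry argument above is shorter and I would present that, relegating the computational check to a remark if needed.
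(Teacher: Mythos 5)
Your proof is correct and is essentially the paper's argument in a different guise: the paper notes that $f_q$ preserves the real line (as $q\in\R$) and uses conformality to conclude that the image circle, which passes through $f_q(r)$ and $f_q(f_q(r))=r$, still meets $\R$ at right angles and hence equals $C(r,f_q(r))$, whereas you express the same symmetry as commutation with complex conjugation and conclude the image circle has real center; both arguments then rest on the same uniqueness of such a circle through the two real points. The degenerate cases you treat separately are left implicit in the paper, so nothing essential differs.
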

\begin{proof}
First note that $f_q$ maps the real line to itself, because $q$ is real.
Now let $C=C(r,f_q(r))$. 
Then $C$ intersects the real line at right angles.
The M\"obius transformation $f_q$ sends $C$ to a circle through $f_q(r),f_q(f_q(r))=r$, and because $f_q$ is conformal the image must again intersect the real line at right angles. Therefore $f_q(C)=C$.
\end{proof}

\begin{proposition}\label{prop: V^2=V^2}
Let $V\subseteq \C$ be a disk. 
Then
\[
V^2 = \{y^2 \mid y\in V\}.
\]
\end{proposition}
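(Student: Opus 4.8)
The plan is to prove the two inclusions $V^2 \subseteq \{y^2 \mid y\in V\}$ and $\{y^2 \mid y\in V\}\subseteq V^2$ separately. The second inclusion is immediate from the definition of $V^2$: if $y\in V$, then $y^2 = y\cdot y \in V^2$. So the entire content lies in the first inclusion, namely showing that every product $s_1 s_2$ with $s_1, s_2 \in V$ can be rewritten as a single square $y^2$ with $y\in V$.

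For the first inclusion, I would fix $s_1, s_2\in V$ and consider the two candidate square roots $\pm\sqrt{s_1 s_2}$ of the product. The idea is that one of these two square roots must lie in $V$. To see this, note that a square root $w$ of $s_1 s_2$ is the geometric mean of $s_1$ and $s_2$ in the sense that $w = \sqrt{|s_1||s_2|}\, e^{i(\theta_1+\theta_2)/2}$ for an appropriate choice of sign, where $\theta_j = \arg(s_j)$. Geometrically, the point $w$ (for the right sign choice) lies "between" $s_1$ and $s_2$: its modulus is between $|s_1|$ and $|s_2|$, and its argument is the average of the arguments. I would make this precise by parametrizing: consider the path $t\mapsto \gamma(t) = s_1^{1-t} s_2^{t}$ for $t\in[0,1]$ along a suitable branch, so that $\gamma(0)=s_1$, $\gamma(1)=s_2$, and $\gamma(1/2)^2 = s_1 s_2$. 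The key geometric claim is then that this path stays inside $V$.

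The cleanest way to establish that claim is via a \emph{logarithmic convexity} argument, treating $V$ as a disk. Actually, a disk need not be convex under the map $\log$, so I would instead argue directly: the set of points $s_1^{1-t}s_2^t$ for $t\in[0,1]$ traces out a logarithmic spiral arc from $s_1$ to $s_2$, and I would show any such arc connecting two points of a disk $V$ (not containing $0$ in a problematic way — but note $0$ could be in $V$; if $s_1=0$ or $s_2=0$ the product is $0=0^2$ and $0\in V$, so we may assume $s_1,s_2\neq 0$) stays in $V$. Alternatively, and more robustly, I would use the fact that $\sqrt{s_1 s_2}$, for the right branch, lies on the circle through $s_1$ and $s_2$ that is orthogonal to... hmm, that is not quite it either. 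The most reliable route: write $\sqrt{s_1s_2}$ and observe it lies in the closed disk bounded by the circle $C(s_1,s_2)$ having segment $[s_1,s_2]$ as diameter — indeed $w^2 = s_1 s_2$ iff $w-s_1$ and $w-s_2$ satisfy $\frac{w-s_1}{w-s_2}=-\frac{\overline{?}}{}$... The clean statement is that $w^2 = s_1s_2$ is equivalent to $(w-s_1)(w-s_2) = w^2 - (s_1+s_2)w + s_1s_2 = -( s_1+s_2)w + 2s_1s_2 = -(s_1+s_2)(w - \tfrac{2s_1s_2}{s_1+s_2})$, which does not obviously put $w$ on $C(s_1,s_2)$. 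So I expect the main obstacle to be pinning down exactly which elementary geometric fact makes "a square root of $s_1s_2$ lies in any disk containing $s_1$ and $s_2$" true, and then verifying it cleanly for an arbitrary disk $V$ (any center, any radius), handling the degenerate cases $s_1 = 0$, $s_2=0$, or $V$ a single point along the way. Once the right lemma is identified — most likely "for $s_1,s_2$ in a disk $V$, at least one of $\pm\sqrt{s_1 s_2}$ lies in $V$", proved by reducing via rotation/scaling to a normalized disk and a modulus-and-argument estimate — the proof is short.

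Here is the skeleton I would write:

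\begin{proof}[Proof sketch]
The inclusion $\{y^2\mid y\in V\}\subseteq V^2$ is immediate since $y^2=y\cdot y$. For the reverse inclusion, let $s_1,s_2\in V$; we must find $y\in V$ with $y^2=s_1s_2$. If $s_1=0$ or $s_2=0$, then $s_1s_2=0=0^2$ and $0\in V$ by taking $y=0$ only if $0\in V$; if $0\notin V$ this case does not occur. If $s_1=s_2$ we take $y=s_1$. Otherwise, write $s_j=\rho_j e^{i\varphi_j}$ with $\rho_j>0$ and $|\varphi_1-\varphi_2|<2\pi$, and set $y=\sqrt{\rho_1\rho_2}\,e^{i(\varphi_1+\varphi_2)/2}$, so that $y^2=s_1s_2$. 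It remains to check $y\in V$. By applying a rotation about the origin (which preserves disks, the squaring-to-product correspondence, and maps $V$ to another disk) we may assume $\varphi_1=-\varphi_2=:\varphi\ge 0$, so that $y=\sqrt{\rho_1\rho_2}\in\R_{>0}$ while $s_1=\bar s_2$. Then $V$ is a disk containing the two conjugate points $s_1,\bar s_1$, hence $V$ meets the real axis, and the real point $y$ lies between the real parts... [here one completes the elementary argument showing the real geometric mean of $s_1$ and $\bar s_1$ lies in $V$, using that $V\cap\R$ is an interval and that $\sqrt{\rho_1\rho_2}$ lies in the horizontal extent of $V$]. This shows $y\in V$ and completes the proof.
\end{proof}
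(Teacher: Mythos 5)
The easy inclusion $\{y^2\mid y\in V\}\subseteq V^2$ is fine, but the substantive half is not actually proved: your sketch reduces everything to the claim that, after rotating so that $\arg s_1=-\arg s_2=\varphi\ge 0$, the \emph{positive} square root $y=\sqrt{\rho_1\rho_2}$ lies in $V$, and then leaves exactly that claim as a bracketed placeholder. Worse, in that normalized form the claim is false. Take $V=B_{0.1}(-1)$ and $s_1=\bar{s_2}=-1+0.05i$; these already satisfy $\arg s_1=-\arg s_2$, the product $s_1s_2=1.0025$ is real and positive, and of its two square roots $\pm\sqrt{1.0025}$ only the \emph{negative} one lies in $V$. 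The statement you actually need is that \emph{at least one} of $\pm\sqrt{s_1s_2}$ lies in $V$, and your normalization $\varphi_1=-\varphi_2$ discards precisely the sign freedom that encodes which root it is. (Two smaller issues: $s_1=\bar s_2$ only holds when $|s_1|=|s_2|$, and ``$\sqrt{\rho_1\rho_2}$ lies in the horizontal extent of $V$'' would not imply $\sqrt{\rho_1\rho_2}\in V\cap\R$ anyway, since $V\cap\R$ is in general a strictly shorter interval than the horizontal extent of $V$.)

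The paper disposes of the nontrivial inclusion in one line: it is the bivariate case of the Grace--Walsh--Szeg\H{o} theorem, applied to the symmetric multiaffine polynomial $(z_1,z_2)\mapsto z_1z_2$ on the circular region $V$. If you want a self-contained argument in the spirit of your reduction, here is one that works. If $s_1s_2=0$ then $0\in V$ and $0=0^2$. Otherwise rotate and scale (both preserve disks and commute with the problem) so that the two square roots of $s_1s_2$ are $\pm1$, i.e.\ $s_2=1/s_1$. Suppose neither $1$ nor $-1$ lies in $V$. The M\"obius map $\phi(z)=(z-1)/(z+1)$ satisfies $\phi(1/z)=-\phi(z)$ and, since $-1\notin V$, maps $V$ to a closed disk which avoids $0=\phi(1)$. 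But that disk contains both $u=\phi(s_1)$ and $-u=\phi(1/s_1)$, and $|u-c|\le r$ together with $|{-u}-c|\le r$ forces $|c|\le r$, i.e.\ $0\in\phi(V)$ --- a contradiction. Hence one of $\pm 1$ lies in $V$, which is the lemma your sketch needs.
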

\begin{proof}
Obviously the second is contained in the first. The other inclusion is an immediate consequence of the Grace-Walsh-Szeg\H{o} theorem.
\end{proof}

Now we can get into the three lemmas mentioned.

\begin{lemma}\label{lem:(0,1)}
For each $q\in(0,1)$ there exists a closed disk $V\subseteq\C$ strictly contained in $B_{\sqrt{1-q}}$, satisfying $0\in V$, $f_q(V)=V$ and $V^2 \Subset V$.
\end{lemma}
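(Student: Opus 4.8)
The plan is to look for $V$ as a disk centered on the real axis, say $V = B_\rho(c)$ with $c,\rho \in \R$ and $0 < \rho$, and to exploit the two tools just established: Lemma~\ref{lem:real circles} to handle the constraint $f_q(V)=V$, and Proposition~\ref{prop: V^2=V^2} to reduce the constraint $V^2 \Subset V$ to the one-variable statement $\{y^2 : y \in V\} \Subset V$. Concretely, I would parametrize candidate disks by a single real parameter: pick a real number $r$ and set $V = V_r$ to be the closed disk whose boundary circle is $C(r, f_q(r))$, i.e.\ the disk with the segment $[r, f_q(r)]$ as a diameter (assuming $r$ is chosen so that $r < f_q(r)$, which holds for suitable $r<1$ since $f_q(y) = 1 + q/(y-1)$). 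By Lemma~\ref{lem:real circles} this disk automatically satisfies $f_q(V_r) = V_r$, disposing of one of the four required properties for free.

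Next I would pin down the range of $r$. For $0\in V_r$ we need $0$ to lie between $r$ and $f_q(r)$ on the real line; since $f_q(0) = 1 - q \in (0,1)$, and $f_q$ is an orientation-reversing involution on the relevant interval, one checks that $0 \in V_r$ precisely when $r \le 0 \le f_q(r)$, equivalently $r$ ranges over an interval of the form $(1-q \text{'s preimage considerations}, 0]$ — I would compute the exact endpoint but the upshot is that $r=0$ is allowed and gives $V_0 = B_{(1-q)/2}((1-q)/2)$, and slightly negative $r$ is also allowed. For the containment $V_r \Subset B_{\sqrt{1-q}}$, I would verify that the rightmost point $f_q(r)$ of $V_r$ satisfies $f_q(r) < \sqrt{1-q}$ and the leftmost point $r$ satisfies $-\sqrt{1-q} < r$; at $r=0$ the disk is $B_{(1-q)/2}((1-q)/2)$, whose rightmost point is $1-q$, and since $1-q < \sqrt{1-q}$ for $q \in (0,1)$ this is comfortably strict, so $r=0$ (or a small perturbation) works for this constraint too.

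The main obstacle is the squaring constraint $V_r^2 \Subset V_r$. By Proposition~\ref{prop: V^2=V^2} it suffices to show that the image of $V_r$ under $y \mapsto y^2$ has closure contained in the interior of $V_r$. Since $V_r$ is a disk meeting the real axis in the segment $[r, f_q(r)]$ with $r \le 0 \le f_q(r)$, its image under squaring is a region symmetric about the real axis contained in $B_{\max(r^2, f_q(r)^2)}$ intersected with $\{\Re \ge 0\}$-ish constraints; the key inequalities to check are that $f_q(r)^2 < f_q(r)$ (so the image doesn't escape to the right) and that $|y|^2$ stays small enough for all $y \in V_r$ that $y^2$ lands strictly inside. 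At $r=0$: the disk $B_{(1-q)/2}((1-q)/2)$ has all points of modulus at most $1-q < 1$, so squaring strictly shrinks moduli, and one verifies the squared image is strictly inside. I would then argue that this strict inclusion persists for $r$ in a neighborhood of $0$, and finally check $0 \in V_r$, $f_q(V_r) = V_r$, $V_r \Subset B_{\sqrt{1-q}}$ all simultaneously hold for such $r$ (taking $r=0$ if it already satisfies everything with room to spare). The only delicate point is confirming the strict (rather than merely non-strict) containments, which is why I keep $r$ as a free parameter to perturb if the boundary case $r=0$ is tight anywhere — but given $1-q < \sqrt{1-q}$ and moduli $< 1$ inside $V_0$, I expect $r=0$ to already work outright, making the proof a short explicit computation.
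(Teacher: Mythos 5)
Your general strategy --- a disk whose diameter is a real segment of the form $[t,f_q(t)]$, with invariance supplied by Lemma~\ref{lem:real circles} and the squaring condition reduced via Proposition~\ref{prop: V^2=V^2} --- is exactly the approach the paper takes, but your proposed parameter value is wrong, and this is a genuine gap rather than a boundary technicality. At $t=0$ the disk $V_0=B_{(1-q)/2}\bigl(\tfrac{1-q}{2}\bigr)$ fails $V_0^2\Subset V_0$, and not only because $0=0^2$ lies on $\partial V_0$: writing $y=u+iv$, one has $\Re(y^2)=u^2-v^2$, and minimizing this over $V_0$ gives $-\tfrac{(1-q)^2}{8}<0$. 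Concretely, $y=\tfrac{1-q}{4}(1+i\sqrt3)\in V_0$ but $y^2=\tfrac{(1-q)^2}{8}(-1+i\sqrt3)$ has negative real part, while $V_0$ lies in the closed right half-plane; so $V_0^2\not\subseteq V_0$. Your intuition that ``squaring strictly shrinks moduli'' controls only $|y^2|$, not where $y^2$ lands relative to a disk that hugs the origin; the image of a disk touching (or containing) $0$ under squaring always spills into the left half-plane. Moreover, a small perturbation $t=-\epsilon$ does not rescue this: the same computation shows $\min_{y\in V_t}\Re(y^2)\approx-\tfrac{(1-q)^2}{8}$, so you would need $\epsilon$ at least of order $(1-q)^2$, a quantity bounded away from $0$ in terms of $q$ --- not a perturbation.

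The repair is to place the diameter where the paper does: choose $a\in(1-q,\sqrt{1-q})$ with $b:=f_q(a)\in(-\sqrt{1-q},-(1-q))$, which is possible because $f_q(\sqrt{1-q})=-\sqrt{1-q}$ and $f_q'<0$ on $[0,1)$. With $V$ the closed disk on the diameter $[b,a]$ one gets $V\Subset B_{\sqrt{1-q}}$, hence $V^2\subseteq B_{1-q}$, while $b<-(1-q)<1-q<a$ forces $B_{1-q}\Subset V$; thus $V^2\Subset V$ follows with no analysis of the squared image at all. Note that $|t|$ must lie strictly between $1-q$ and $\sqrt{1-q}$ --- your candidate $t=0$ is on the wrong side of this window. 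A further small imprecision: Lemma~\ref{lem:real circles} only gives invariance of the boundary circle, so $f_q(V)=V$ is not ``automatic''; one must also check that $f_q$ does not exchange interior and exterior (the paper does this by observing that the interior point $0$ maps to the interior point $1-q$).
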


\begin{proof}
Let $r=\sqrt{1-q}$ and choose real numbers $a\in(r^2,r), b\in(-r,-r^2)$ with $f_q(a)=b$. They exist because $f_q(r)=-r$ and $f'_q(r)=\frac{-q}{(1-r)^2}<0$.
Let $V$ be the closed disk with diameter the line segment between $a$ and $b$. Clearly $V\Subset B_r$ and $0\in V$.
From Lemma~\ref{lem:real circles} it follows that the boundary of $V$ is mapped to itself. Further, the interior point $0\in V$ is mapped to $f_q(0)=1-q=r^2$ which is also an interior point of $V$. Therefore $f_q(V)=V$.
Last, we see that $V^2 \subseteq B_r^2=B_{r^2} \Subset V$, confirming all properties of $V$.
\end{proof}

\begin{lemma}\label{lem:(32/27)}
For each $q\in (1,32/27)$ there exists a closed disk $V\subseteq \mathbb{C}$ strictly contained in $B_{\sqrt{q-1}}$ satisfying $0\in V$, $f_q(V)=V$ and  $V^2\Subset V$.
\end{lemma}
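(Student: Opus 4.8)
The statement for $q \in (1, 32/27)$ is the series-parallel analogue of Lemma~\ref{lem:(0,1)}, but now $q-1 > 0$ so $f_q$ no longer has a real fixed point of the form $\pm\sqrt{q-1}$; instead $f_q(r) = 1 + \tfrac{q}{r-1}$ has fixed points that are the roots of $y^2 - 2y + (2-q) = 0$, i.e. $y = 1 \pm \sqrt{q-1}$, which for $q>1$ are both real and positive. So the first thing I would do is understand the real dynamics of $f_q$ on $\mathbb{R}$: it is an orientation-reversing involution (a Möbius map with a pole at $y=1$), with the two fixed points $1 - \sqrt{q-1}$ and $1 + \sqrt{q-1}$ straddling the pole. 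The natural candidate disk $V$ should have its real diameter $[a,b]$ symmetric under $f_q$ in the sense that $f_q(a) = b$, and should contain $0$.

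**Choosing the disk.** Following the pattern of Lemma~\ref{lem:(0,1)}: pick $a < 0 < b$ real with $f_q(a) = b$, let $V = \{z : |z - \tfrac{a+b}{2}| \le \tfrac{b-a}{2}\}$ be the closed disk with diameter $[a,b]$, and then check the three conditions. By Lemma~\ref{lem:real circles} the boundary circle $C(a, f_q(a)) = C(a,b)$ is $f_q$-invariant, and $0 \in V$ is an interior point mapped to $f_q(0) = 1 - q < 0$; since $1-q$ must lie in the interior of $V$ we need $a < 1-q$, which pins down a constraint on how to choose $a$. Given $f_q$ preserves the circle and maps one interior point to another interior point, $f_q(V) = V$ follows exactly as before. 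The genuinely new work is the inequality $V^2 \Subset V$: by Proposition~\ref{prop: V^2=V^2}, $V^2 = \{z^2 : z \in V\}$, so I need the squared image of the disk $V$ to sit strictly inside $V$ itself, and I also want $V \Subset B_{\sqrt{q-1}}$. The squaring map sends the disk of center $c = \tfrac{a+b}{2}$ and radius $\rho = \tfrac{b-a}{2}$ to a region contained in the disk of center $c^2 + \rho^2$ (roughly) — more carefully, $\{z^2 : |z-c|\le\rho\}$ is contained in the disk centered at $c^2$ of radius $2|c|\rho + \rho^2$, or one can use that it is contained in $\bar B_{(|c|+\rho)^2}$; I would use whichever bound is cleanest, probably comparing $|z|^2 \le (|c|+\rho)^2$ and tracking that the argument stays controlled.

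**The optimization / tightness issue.** The real obstacle is that $32/27$ is \emph{tight}: for $q$ approaching $32/27$ from below there may be essentially only one workable choice of disk, so I can't be sloppy with constants. I expect the cleanest route is to parametrize by a single real parameter (say $t = -a > 0$, with $b = f_q(-t) = 1 - \tfrac{q}{t+1}$, which is positive precisely when $t > q - 1$), write out the conditions $V \Subset B_{\sqrt{q-1}}$, $0 \in V$ ($b > 0$, automatic from $t>q-1$; and $a<0$ automatic), $1-q$ interior ($-t < 1-q$, i.e. $t > q-1$ again), and $V^2 \Subset V$ as explicit inequalities in $t$ and $q$, then show the feasible range of $t$ is nonempty for every $q \in (1, 32/27)$. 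The condition $V^2 \Subset V$ will reduce, after using $V^2 \subseteq \bar B_{\max(|a|,|b|)^2}$ and noting $\max(|a|,|b|) = |a| = t$ for the relevant choices, to $t^2 < b = 1 - \tfrac{q}{t+1}$, i.e. $(t^2-1)(t+1) < 1 - q$ rearranged — wait, more precisely $t^2(t+1) < (t+1) - q$, i.e. $t^3 + t^2 - t - 1 + q < 0$, i.e. $q < -(t^3 + t^2 - t - 1) = (1-t)(1+t)^2 = (1-t^2)(1+t)$. Maximizing $(1-t^2)(1+t)$ over $t \in (q-1, 1)$ (we need $t < 1$ for the right side to be positive, and $t > q-1$ from the other constraints): setting the derivative $\frac{d}{dt}[(1-t)(1+t)^2] = (1+t)^2 \cdot(-1) + (1-t)\cdot 2(1+t) = (1+t)(1-3t) = 0$ gives $t = 1/3$, and the maximum value is $(1 - 1/9)(1 + 1/3) = \tfrac{8}{9}\cdot\tfrac{4}{3} = \tfrac{32}{27}$. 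That is exactly where $32/27$ comes from, so for any $q < 32/27$ there is an open interval of valid $t$ around $1/3$ (intersected with $(q-1, 1)$, which is nonempty since $q-1 < 8/27 < 1/3$... one should double-check $q - 1 < 1/3$, i.e. $q < 4/3$; since $32/27 < 4/3$ this holds), and I should also verify the strictness $V \Subset B_{\sqrt{q-1}}$ separately, which amounts to $t < \sqrt{q-1}$ — hmm, this looks potentially incompatible with $t > q-1$ unless $q-1 < \sqrt{q-1}$, i.e. $q < 2$, fine, and one needs $b < \sqrt{q-1}$ too; I would need to recheck which disk actually sits in $B_{\sqrt{q-1}}$ and possibly the containment is the other way or uses a shifted ball — the bookkeeping here is where I expect to spend the most care, reconciling "disk centered away from $0$" with "contained in a ball centered at $0$."

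**Summary of the structure.** (1) Set up $a = -t$, $b = f_q(-t) = 1 - q/(t+1)$ with parameter $t$; (2) identify the constraints as inequalities in $t, q$; (3) use Lemma~\ref{lem:real circles} for $f_q(V) = V$ and Proposition~\ref{prop: V^2=V^2} to reduce $V^2 \Subset V$ to a polynomial inequality; (4) show the maximum of the relevant cubic over the admissible $t$-range is $32/27$, hence the constraint set is nonempty (indeed open) for all $q \in (1, 32/27)$; (5) clean up the $V \Subset B_{\sqrt{q-1}}$ containment. The main obstacle is step (5) combined with keeping all the strict-inclusion constants consistent near the tight endpoint $q = 32/27$.
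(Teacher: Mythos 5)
Your construction is, in its essentials, the same as the paper's: a closed disk $V$ with real diameter $[-t,\,f_q(-t)]$, invariance of $V$ from Lemma~\ref{lem:real circles}, and the reduction of $V^2\Subset V$ (via $V\subseteq B_t$ and $B_{f_q(-t)}\subseteq V$) to the cubic inequality $q<(1-t)(1+t)^2$, whose maximum $32/27$ at $t=1/3$ is exactly where the threshold comes from. That core computation is correct, and Proposition~\ref{prop: V^2=V^2} is not even needed once you pass to $V^2\subseteq B_t^2=B_{t^2}$.

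The genuine gap is the step you flag at the end and leave unresolved: the containment $V\Subset B_{\sqrt{q-1}}$. Since $t>b=f_q(-t)$ for every $q>1$ (this is equivalent to $q>1-t^2$), the point of $V$ farthest from the origin is $-t$, so the containment is precisely $t<\sqrt{q-1}$; it is not ``the other way,'' and no shifted ball helps. This kills your proposed choice of ``an open interval of valid $t$ around $1/3$'' for $1<q<10/9$, where $\sqrt{q-1}<1/3$. The feasible set is still nonempty there, but that needs an argument: for instance, take $t$ slightly below $\min(1/3,\sqrt{q-1})$ and check $t-t^2-t^3>q-1$ in both regimes (for $\epsilon:=q-1\le 1/9$ this boils down to $\epsilon^2-6\epsilon+1>0$, which holds since $1/9<3-2\sqrt2$). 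The paper sidesteps the case split by anchoring the construction at the solution $r\in(-1/3,0)$ of $f_q(r)=r^2$: the identity $q=(r^2-1)(r-1)$ gives $q-1=r^3-r^2-r>r^2$ directly from $r>-1/3$, so membership of $V$ in $B_{\sqrt{q-1}}$ is automatic for $t$ near $r$. Either fix completes the proof, but as written your feasibility claim is false on part of the interval $(1,32/27)$ and the argument is incomplete exactly at the point you yourself identified as delicate.
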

\begin{proof}
The equation $f_q(z)=z^2$ has a solution in $(-1/3,0)$, since $f_q(0)=1-q<0$ and $f_q(-1/3)=1-3q/4>1/9$. Denote one such solution as $r$. Then we see that 
\begin{equation}\label{eq:der smaller than 2r}
f'_q(r)=\frac{-q}{(r-1)^2}=-r-1<2r=[z^2]'_{z=r},
\end{equation}
and
\begin{equation}\label{eq:q-1 smaller than r^2}
q-1=r^3-r^2-r>-\tfrac{1}{3}r^2-r^2+3r^2>r^2.
\end{equation}
Since $f_q(r)=r^2<-r$, it follows that for $t\in (-1/3,r)$ close enough to $r$ we have $f_q(t)<-t$, $t^2<f_q(t)$ by \eqref{eq:der smaller than 2r} and $t>-\sqrt{q-1}$ by \eqref{eq:q-1 smaller than r^2}.
%
%
Fix such a value of $t$ and let $V$ be the closed disk with diameter the line segment between $t$ and $f_q(t)$. The exterior point $\infty$ is now mapped to the exterior point $1$, so by Lemma~\ref{lem:real circles} we then know that $f_q(V)=V$.
By construction we have that 
\[
V^2\subseteq B^2_t=B_{t^2} \Subset B_{f_q(t)}\subseteq V
\]
and so $V$ satisfies the desired properties.
\end{proof}

\begin{lemma}\label{lem:around 1}
There exists an open neighborhood $I$ around $1$ such that for each $q\in I\setminus\{1\}$ there exists a disk $V\subseteq \mathbb{C}$,
satisfying $0\in V$, $1-q\not\in V^2$, $V^2\subseteq V$ and $f_q(f_q(V)^2) \subseteq V$.
\end{lemma}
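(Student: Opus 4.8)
The plan is to construct $V$ as a small disk centered not at $0$ but at a point depending on $q$, chosen so that the image $f_q(f_q(V)^2)$ — which at $q=1$ degenerates because $f_1$ is the identity-like map $y\mapsto 1+\tfrac{1}{y-1}$, wait, more precisely $f_q$ has a removable-looking behavior near $q=1$ — stays inside $V$. The key point is to analyze the limiting situation at $q=1$ first: when $q=1$, the map $f_1(y)=1+\tfrac{1}{y-1}=\tfrac{y}{y-1}$, and one checks that the series-composition operation $y\mapsto f_1(f_1(y_1)f_1(y_2))$ together with parallel composition $y\mapsto y_1y_2$ has $y=0$ as a fixed point of the whole dynamics in a strong (contracting) sense away from the bad set $\{1,1-q\}=\{1,0\}$. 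So first I would set up the ``frozen'' picture at $q=1$: show there is a closed disk $V_0$ around $0$ with $V_0^2\subseteq V_0$ and $f_1(f_1(V_0)^2)\subseteq \operatorname{int}(V_0)$ strictly, and $0\notin$ the closure of $f_1(V_0)^2$-related bad points so that $1-q=0$ is excluded once we perturb.

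Second, I would invoke continuity/openness: the conditions ``$0\in V$'', ``$V^2\subseteq V$'', ``$f_q(f_q(V)^2)\subseteq V$'' and ``$1-q\notin V^2$'' are, for a disk $V$ with center $c$ and radius $\rho$, given by finitely many inequalities on the coefficients, which depend continuously (in fact holomorphically) on $q$ away from $q=0$ and on $(c,\rho)$. Since at $q=1$ we have strict inclusions (using $\Subset$), these strict inclusions persist for $q$ in a neighborhood $I$ of $1$ and for the same disk $V=V_0$ (or a slightly enlarged/shifted one). The only subtlety is the condition $1-q\notin V^2$: at $q=1$ this reads $0\notin V_0^2$, which fails if $0\in V_0$! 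So I cannot take $V_0$ centered at $0$ if I want $0\notin V_0^2$. This forces the real construction: $V$ must contain $0$ but $V^2$ must avoid $1-q$, which at $q$ near $1$ is near $0$; since $0\in V$ we have $0\in V^2$, so $V^2$ is a neighborhood of $0$ and cannot avoid $1-q$ for $q$ close to $1$ unless... Hmm — this means the statement as I'm reading it must be handled by noting $1-q\in V^2$ is actually \emph{allowed}, and re-reading: the hypothesis of Lemma~\ref{lem:condition for absence of zeros SP} requires $1-q\notin V^2$, and that's exactly what Lemma~\ref{lem:around 1} must deliver, so the resolution must be that $V^2$ is \emph{not} a full neighborhood of $0$ — e.g. if $V$ is a disk through $0$ with $0$ on its boundary, then $V^2$ is a cardioid-like region with a cusp at $0$, and $1-q$ can avoid it by approaching $0$ from the correct direction.

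So the refined plan: take $V=V_q$ to be a closed disk with $0$ \emph{on its boundary}, say centered at a small positive real $c(q)$ with radius $c(q)$, so $0\in V$ and $V^2$ is a region with a cusp at $0$ pointing in the direction of the negative reals (roughly), while $1-q$ for $q>1$ is a small negative real and for $q<1$ a small positive real — I'd check $1-q$ lands in the cusp's complement, using that near $0$ the boundary of $V^2$ is tangent to the imaginary axis to second order (since squaring doubles angles and $V$'s boundary at $0$ is the imaginary axis). Then verify $V^2\subseteq V$: since $V\subseteq B_{2c}(0)$, $V^2\subseteq B_{4c^2}(0)$, which sits inside $V$ once $c$ is small because $V$ contains a disk of radius $\sim c$ around $c$. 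Finally verify $f_q(f_q(V)^2)\subseteq V$: compute $f_q(V)$, which for $q$ near $1$ and $V$ near $0$ is near $f_1(0)=0$... actually $f_q(0)=1-q\to 0$, good, so $f_q(V)$ is a small region near $0$, $f_q(V)^2$ even smaller near $0$, and $f_q$ of that is near $f_q(0)=1-q$, small; then a quantitative estimate on the derivatives (Schwarz–Pick type, or just explicit Möbius estimates) shows the composite region is contained in $V$. The main obstacle I expect is exactly this last containment: one must track the scaling carefully — $f_q$ near its fixed/critical behavior at $q=1$ has derivative blowing up or the map degenerating, so the estimate $f_q(f_q(V)^2)\subseteq V$ needs the radius of $V$ chosen not too small relative to $|1-q|$ (so that $1-q\in V$) yet small enough that the quadratic contraction from squaring wins; balancing $c(q)\sim |1-q|$ (or some power) and checking all four conditions simultaneously in that regime is the delicate computation, best done by writing everything in terms of $\epsilon=q-1$ and keeping leading-order terms.
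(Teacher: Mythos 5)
There is a genuine gap, and it originates in the step where you rule out disks centered at $0$. You argue that since $0\in V$, the set $V^2$ is a neighborhood of $0$ and therefore ``cannot avoid $1-q$ for $q$ close to $1$.'' This is false for the statement actually being proved: the disk $V$ is allowed to depend on $q$, and for a \emph{fixed} $q\neq 1$ one may simply take $V=B_s$ centered at $0$ with $s^2<|1-q|$; then $V^2=B_{s^2}$ and $1-q\notin V^2$ automatically. This is exactly what the paper does: with $R=\sqrt{|1-q|}$ it takes $V=B_s$ for a suitable $0<s<R$, so that $0\in V$, $V^2\subseteq V$ and $1-q\notin V^2$ are all trivial, and the whole content of the lemma is reduced to the single condition $f_q(B_s)^2\subseteq f_q(B_s)$. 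That condition is then verified by sandwiching $B_{\rho^{-1}(s)}\subseteq f_q(B_s)\subseteq B_{\rho(s)}$ with $\rho(s)=\frac{R^2+s}{1-s}$ (using that $f_q$ is an involution) and checking that $\rho(s)^2<\rho^{-1}(s)$ has a solution $s\in(0,R)$ whenever $R<2-\sqrt 3$. Your misdiagnosis sends you toward a far more complicated construction (a disk with $0$ on its boundary, whose square is a cardioid with a cusp at $0$) that is not needed.

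Moreover, even granting your alternative construction, the proposal does not close. The decisive containment $f_q(f_q(V)^2)\subseteq V$ is precisely the hard part of the lemma, and you leave it as ``a quantitative estimate \ldots\ best done by keeping leading-order terms,'' without specifying the balance between $c(q)$ and $|1-q|$ or verifying it. Note also that the cusp argument only helps when $\arg(1-q)$ points into the cusp; for general complex $q$ near $1$ (e.g.\ $q<1$ real, where $1-q$ is a positive real and the cardioid extends to $4c^2$ along the positive axis) you would anyway need $|1-q|>4c^2$, i.e.\ the same kind of smallness condition on the radius relative to $|1-q|$ that already makes the centered disk work. So the extra geometric machinery buys nothing, and the proof as proposed is incomplete at its critical step.
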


\begin{proof}
Let $R=\sqrt{|1-q|}$. We claim that if $R$ is sufficiently small, there exists an $0<s<R$ such that $V=B_s$ satisfies the required conditions. Actually, we will show this to be true with $R<2-\sqrt{3}$, thus giving for $I$ the open disk $|q-1|<7-4\sqrt{3}$.

Trivially, $0\in V, 1-q\not\in V^2$ and $V^2\subseteq V$, so we only need to show that $f_q(f_q(V)^2)\subseteq V$, or equivalently $f_q(V)^2 \subseteq f_q(V)$.

We start with bounding the image of the disk $B_s$:
\begin{align*}
    f_q(B_s)&=\left\{\frac{y+q-1}{y-1} ~\middle|~ y\in B_s\right\}\\
    &\subseteq \left\{\frac{y+q'-1}{y'-1} ~\middle|~ y,y'\in B_s, q'\in B_{R^2}(1)\right\}\\
        &\subseteq \left\{\frac{z}{y'-1} ~\middle|~ y'\in B_s, z\in B_{R^2+s}\right\}\\
    &\subseteq \left\{z ~\middle|~ |z|\le \frac{R^2+s}{1-s}\right\}.
\end{align*}
So if we define $\rho(s)=\frac{R^2+s}{1-s}$, then $f_q(B_s)\subseteq B_{\rho(s)}$. 
Since $f_q$ is an involution, we have
\[
    B_{\rho^{-1}(s)}\subseteq f_q(B_s).    
\]
Now we claim that if $R<2-\sqrt{3}$, then there exists $0<s<R$ such that $\rho(s)^2<\rho^{-1}(s)$.
This is sufficient since for this value of $s$ we have
\[
f_q(B_s)^2\subseteq B^2_{\rho(s)}=B_{\rho(s)^2}\subseteq B_{\rho^{-1}(s)}\subseteq f_q(B_s),
\]
as desired.

We now prove the claim.
As $0<s<R<1$, the inequality $\rho(s)^2<\rho^{-1}(s)=\frac{s-R^2}{1+s}$ is equivalent to
\begin{align*}
    (R^2+1)(3s^2+(R^2-1)s+R^2)&<0, & 0&<s<R.
\end{align*}
If we have a solution, then the quadratic polynomial in the variable $s$ should have $2$ real solutions, since its main coefficient is positive. Since the linear term is negative and the constant term is positive, both roots are positive. 
Thus it is sufficient to prove that  the ``smaller'' real root is less then $R$, i.e.
\[
    \frac{(1-R^2)-\sqrt{(1-R^2)^2-12R^2}}{6}<R.
\]
This indeed holds true for $R<2-\sqrt{3}$.
\end{proof}

Now we are ready to prove Theorem~\ref{thm:(0,32/27)}.
\begin{proof}[Proof of Theorem~\ref{thm:(0,32/27)}]
For every $q\in(0,32/27)$ we will now find an open $U$ around $q$, such that $U\setminus\{1\}$ does not contain chromatic zeros of series-parallel graphs. For $q=1$ this follows directly from Lemmas~\ref{lem:around 1} and \ref{lem:condition for absence of zeros SP}. For $q\in(0,1)$ and $q\in(1,32/27)$ we appeal to Lemmas~\ref{lem:(0,1)} and \ref{lem:(32/27)} respectively to obtain a closed disk $V$ with $V\Subset B_{\sqrt{|1-q|}}$, $f_q(V)=V$ and $V^2\Subset V$. We then claim that there is an open $U$ around $q$, for which this disk $V$ still satisfies the requirements of Lemma~\ref{lem:condition for absence of zeros SP} for all $q'\in U$.\\
Certainly $0\in V$ and $V^2 \subseteq V$ remain true. Because $V\Subset B_{\sqrt{|1-q|}}$ holds, we can take $U$ small enough such that $V\subseteq B_{\sqrt{|1-q'|}}$ still holds, which confirms $1-q' \not\in V^2$. Lastly, we know that $f_q(f_q(V)^2) =f_q(V^2) \Subset f_q(V)=V$. Because $V$ is compact, and the function $y\mapsto f_{q'}(f_{q'}(y)^2)$ depends continuously on $q'$, the inclusion $f_{q'}(f_{q'}(V)^2)\Subset V$ remains true on a small enough open $U$ around $q$.
\end{proof}

\section{Activity and zeros}\label{sec:active}
In this section we prove Theorems~\ref{thm:q>32/27} and \ref{thm:Re(q)>3/2}. 
We start with a theorem that gives a concrete condition to check for presence of chromatic zeros.
For any $q\neq 0$ we call any $y\in f_q(\mathcal{E}(q))$ a \emph{virtual interaction}.
For example, $f_{q}(0)=1-q$ is a virtual interaction (obtained from the effective edge interaction of a single edge).

\begin{theorem}\label{thm:to escape or not to escape}
Let $q_0\in \mathbb{C}\setminus \{0\}.$
If there exists either an effective edge interaction $y\in \mathcal{E}(q_0)$ or a virtual interaction $y\in f_{q_0}(\mathcal{E}(q_0))$ such that $|y|>1$, then there exists $q$ arbitrarily close to $q_0$ and $G\in \mathcal{G}_{\mathrm{SP}}$ such that $Z(G;q)=0$.
\end{theorem}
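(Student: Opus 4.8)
The plan is to use the family of rational functions $\{q \mapsto y_G(q) : G \in \mathcal{G}_{\mathrm{SP}}\}$ together with Montel's theorem, following the strategy of \cite{de2021zeros}. The key point is that if some $y_G(q_0)$ has modulus exceeding $1$, then by iterating parallel compositions (which multiply effective edge interactions, by Lemma~\ref{lem:formulas effective}) we can make the modulus blow up, and this kind of ``escape to infinity'' behaviour obstructs normality of the family of rational functions near $q_0$, which by Montel forces the existence of a nearby $q$ and a graph $G$ with $Z(G;q)=0$.

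\smallskip

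\textbf{Step 1: Reduce the virtual case to the effective case.} Suppose we only have a virtual interaction $y \in f_{q_0}(\mathcal{E}(q_0))$ with $|y|>1$, say $y = f_{q_0}(y_H(q_0))$ for some $H \in \mathcal{G}_{\mathrm{SP}}$. Note $f_{q_0}(y_H) = f_q(f_q(0)f_q(y_H))\big|_{q=q_0}$ since $f_q(0) = 1-q$ and $f_q$ is an involution with $f_q(1-q)=0$... more carefully: one wants a genuine series-parallel graph whose effective edge interaction at $q_0$ equals (or is close to) this virtual value. The cleanest route is: the series composition $K_2 \bowtie H$ has $y_{K_2 \bowtie H} = f_q(f_q(0)f_q(y_H)) = f_q((1-q)f_q(y_H))$, which is not quite $f_q(y_H)$. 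Instead, observe that attaching more structure — e.g. taking a long path in series, or using that $f_{q_0}(\mathcal{E}(q_0))$ values arise as genuine $y_G(q_0)$ for graphs $G$ built by one extra series composition — lets us realize virtual interactions as effective interactions of actual graphs up to the freedom we need. So I would first argue that it suffices to treat the case of an actual effective edge interaction $y_{G_0}(q_0)$ with $|y_{G_0}(q_0)|>1$, possibly after enlarging the neighborhood of $q_0$ we are willing to move to.

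\smallskip

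\textbf{Step 2: Set up the non-normal family.} Fix $G_0$ with $|y_{G_0}(q_0)| > 1$. Since $y_{G_0}$ is a nonconstant rational function (by Remark~\ref{rem:no weird things can happen}, as $G_0 \notin$ the class with an $s$--$t$ edge, because an $s$--$t$ edge forces $y \equiv 0$), there is an open neighborhood $N$ of $q_0$ on which $|y_{G_0}(q)| > 1+\delta$ for some $\delta > 0$. Now consider the parallel powers $G_0^{\parallel k}$, which by Lemma~\ref{lem:formulas effective} have $y_{G_0^{\parallel k}}(q) = y_{G_0}(q)^k$. On $N$ these tend to $\infty$ locally uniformly. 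The family $\mathcal{F} = \{q \mapsto y_G(q) : G \in \mathcal{G}_{\mathrm{SP}}\}$, viewed as maps $N \to \hat{\mathbb{C}}$, contains these, so it is ``spreading.'' To invoke Montel I want to exhibit that $\mathcal{F}$ omits three values on $N$ — but it does not; rather the point is the reverse: if $\mathcal{F}$ were normal near $q_0$, a subsequence would converge locally uniformly, contradicting the blow-up combined with the fact that $0 \in \mathcal{E}(q)$ always (a fixed value in the family that does not blow up). So $\mathcal{F}$ is \emph{not} normal at $q_0$.

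\smallskip

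\textbf{Step 3: Apply Montel to produce a zero.} Here is the decisive step. We want $Z(G;q) = 0$, equivalently (for $q \notin \{0,1,2\}$, and handling those finitely many points separately or avoiding them since we only need $q$ near $q_0$) $y_G(q) = 1-q$, i.e. $R(G;q) = -1$, by Lemma~\ref{lem:Z=0 R-=-1}. Consider the modified family $g_G(q) := \phi_q(y_G(q))$ where $\phi_q$ is a suitable Möbius transformation (depending holomorphically on $q$) sending the three ``bad'' target functions $q \mapsto 0$, $q \mapsto \infty$, $q \mapsto 1-q$ to $0, 1, \infty$ — concretely something built from $f_q$. Since $0 \in \mathcal{E}(q)$ and $\infty \in \mathcal{E}(q)$ (the $s$--$t$-edge graph has $y \equiv 0$, and parallel composition with it... actually $\infty$ arises too, or we get it as a limit) are always attained, by Remark~\ref{rem:no weird things can happen} no member of $\mathcal{F}$ equals $1-q$ identically. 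If additionally no member ever hits the value $1-q$ at any point of $N$, then the family $\{g_G\}$ omits $0, 1, \infty$ on $N$ (using $0 \in \mathcal{E}$, $\infty \in \mathcal{E}$ give... hmm, we need each $g_G$ to omit all three, which requires $y_G$ to omit $0$, $\infty$, $1-q$ on $N$). That is false in general ($y_G$ can hit $0$), so the correct formulation is: take the subfamily, or observe that the functions $y_{G_0^{\parallel k}} = y_{G_0}^k$ on $N$ omit $0$ (since $|y_{G_0}|>1$ there) and omit $\infty$ eventually... This needs care. The robust argument is the one in \cite{de2021zeros}: non-normality of $\mathcal{F}$ at $q_0$ plus the fact that the three functions $0, 1-q, \infty$ are each in the closure of $\mathcal{F}$'s value set in a structured way implies, via Montel's theorem applied after the Möbius change of coordinates, that some $g_G$ must take the value $\infty$ (i.e. some $y_G(q) = 1-q$) for $q$ arbitrarily close to $q_0$ — otherwise $\{g_G\}$ would omit $\{0,1,\infty\}$ hence be normal, contradicting non-normality.

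\smallskip

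\textbf{Main obstacle.} The delicate point is Step 3: making the Montel argument airtight. The family $\mathcal{F}$ does not literally omit three values, so one cannot apply Montel to $\mathcal{F}$ directly; the trick (from \cite{de2021zeros,PetersRegts}) is to assume for contradiction that $Z(G;q) \neq 0$ for all $G$ and all $q$ in a small disk $N$ around $q_0$, deduce that the \emph{shifted} family $\{f_q \circ y_G\}$ or a Möbius-renormalized version omits the three values $0$, $1$, $\infty$ on $N$ (here one uses that $y_G \ne 1-q$ is the assumption, $y_G \ne 0$ and $y_G \ne \infty$ are arranged by restricting to an appropriate subfamily such as the parallel powers of $G_0$, whose iterates stay in $|y|>1$), conclude $\{g_G\}$ is normal, and then extract from local uniform convergence of the iterates $y_{G_0}^{k}$ a contradiction with the fact that these must converge to $\infty$ while another branch of the construction stays bounded — pinning down exactly which finite subfamily gives the contradiction, and checking it genuinely omits all three values, is where the real work lies.
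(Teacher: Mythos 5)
Your overall strategy (the equivalence of $Z(G;q)=0$ with $R(G;q)\in\{0,-1,\infty\}$ from Lemma~\ref{lem:Z=0 R-=-1}, plus Montel applied to the family of ratios, plus a non-normal subfamily built from compositions of a single escaping graph) is the same as the paper's. However, there are two genuine gaps. The decisive one is in Step~2: local uniform divergence of the parallel powers $y_{G_0}^k\to\infty$ on a neighborhood where $|y_{G_0}|>1+\delta$ does \emph{not} contradict normality, because in the definition used here a subsequence is allowed to converge to the constant function $\infty$ (a perfectly good holomorphic map to $\hat{\mathbb{C}}$), and the presence of the separate constant function $0$ in the family is irrelevant since normality only asks for \emph{some} convergent subsequence of each sequence. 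To genuinely break normality you need a single sequence of interactions that, on every neighborhood of $q_0$, takes values both inside and outside the unit circle, so that any locally uniform limit would be identically $0$ on one open subset and identically $\infty$ on another. Producing such a sequence is exactly the content of the paper's Lemma~\ref{lem:activity outside unit disk}: one uses that $y_{G_0}$ is nonconstant, so a neighborhood $U'$ of $q_0$ maps onto an open set $U$ outside the closed unit disk; the arguments of $\{u^N\mid u\in U\}$ cover the whole circle for large $N$, and near $\infty$ the $q$-dependent line $f_q(C)$ separates two sectors mapped respectively inside and outside the unit disk, so for a suitable $N$ the function $q\mapsto f_q(y_{G_0}(q)^N)$ straddles the unit circle on $U'$; its powers then form the required non-normal family. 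None of this appears in your argument, and without it Step~3 (which you yourself leave as ``where the real work lies'') cannot close.

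The second gap is Step~1: the reduction of the virtual case to the effective case is asserted but not carried out, and the naive attempts you sketch do not work. For instance, if $|f_{q_0}(y_H(q_0))|>1$ then the series powers satisfy $y_{H^{\bowtie N}}(q_0)=f_{q_0}\bigl(f_{q_0}(y_H(q_0))^N\bigr)\to f_{q_0}(\infty)=1$, which lies \emph{on} the unit circle, so at $q_0$ itself you need not ever obtain an effective interaction of modulus exceeding $1$; one must perturb $q$ and again invoke the open-image/argument-covering mechanism to land in the correct sector. In the paper the two cases are treated by the same sector argument but with the roles of series and parallel composition interchanged, rather than by a reduction of one case to the other.
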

We will provide a proof for this result in the next subsection. First we consider some corollaries.

The first corollary recovers a version of Sokal's result~\cite{Sokaldense}.
\begin{corollary}\label{cor:|q-1|>1}
Let $q\in \mathbb{C}$ such that $|1-q|>1$. 
Then there exists $q'$ arbitrarily close to $q$ and $G\in  \mathcal{G}_{\mathrm{SP}}$ such that $Z(G;q')=0$.
\end{corollary}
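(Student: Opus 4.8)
The plan is to deduce Corollary~\ref{cor:|q-1|>1} directly from Theorem~\ref{thm:to escape or not to escape} by exhibiting, for each $q$ with $|1-q|>1$, a single virtual interaction of modulus exceeding $1$. The most economical choice is the virtual interaction coming from the single edge $K_2$: since $y_{K_2}(q)=0$, we have $f_q(0)=1-q\in f_q(\mathcal{E}(q))$, and $|1-q|>1$ by hypothesis. Hence the hypothesis of Theorem~\ref{thm:to escape or not to escape} is satisfied at $q_0=q$ (note $q\neq 0$, since $|1-0|=1\not>1$), and the theorem immediately yields $q'$ arbitrarily close to $q$ and $G\in\mathcal{G}_{\mathrm{SP}}$ with $Z(G;q')=0$.

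So the proof is essentially a one-line application. The only point requiring a word of care is the hypothesis $q_0\neq 0$ in Theorem~\ref{thm:to escape or not to escape}, which is automatic here, and the fact that $f_q$ is defined precisely when $q\neq 0$, which is the same condition. I would write:

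\begin{proof}[Proof of Corollary~\ref{cor:|q-1|>1}]
Since $|1-q|>1$ we have $q\neq 0$, so the M\"obius transformation $f_q$ is defined. The single edge $K_2$ is a series-parallel graph with $y_{K_2}(q)=0$, so $0\in\mathcal{E}(q)$ and therefore $1-q=f_q(0)\in f_q(\mathcal{E}(q))$ is a virtual interaction. By assumption $|1-q|>1$, so Theorem~\ref{thm:to escape or not to escape} applied with $q_0=q$ and this virtual interaction gives $q'$ arbitrarily close to $q$ and $G\in\mathcal{G}_{\mathrm{SP}}$ with $Z(G;q')=0$.
\end{proof}

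There is no real obstacle here; the substance of the argument lives entirely in Theorem~\ref{thm:to escape or not to escape} (and ultimately in Montel's theorem, as sketched in the introduction). If one instead wanted to avoid invoking virtual interactions and use an \emph{effective} edge interaction of modulus $>1$, one could observe via Lemma~\ref{lem:formulas effective} that for suitable graphs — e.g.\ iterated parallel compositions of the path $P_2$, whose effective interaction is $\frac{q-1}{q-2}$ — the effective interactions can be made large in modulus in the relevant region; but this is strictly more work than necessary, so I would stick with the virtual-interaction argument above.
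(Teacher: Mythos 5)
Your proof is correct and is essentially identical to the paper's: both observe that $1-q=f_q(0)$ is a virtual interaction of modulus greater than $1$ and then apply Theorem~\ref{thm:to escape or not to escape}. The extra remark that $|1-q|>1$ forces $q\neq 0$ is a harmless (and accurate) addition.
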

\begin{proof}
First of all note that as mentioned above, $y=f_q(0)=1-q$, is a virtual interaction (since $0$ is the effective edge interaction of a single edge).
By assumption we thus have a virtual interaction $y$ such that $|y|>1.$
The result now directly follows from Theorem~\ref{thm:to escape or not to escape}.
\end{proof}
\begin{remark}
Recall that a generalized theta graph is the  parallel composition of a number of equal length paths.
Sokal~\cite{Sokaldense} in fact showed that we can take $G$ in the corollary above to be a generalized theta graph.
Our proof of Theorem~\ref{thm:to escape or not to escape} in fact also gives this. We will elaborate on this in Corollary~\ref{cor:|q-1|>1 theta} after giving the proof.
\end{remark}

Our second corollary gives us Theorem~\ref{thm:q>32/27}.
\begin{corollary}\label{cor:q>32/27}
Let $q>32/27$. Then there exists $q'$ arbitrarily close to $q$ and $G\in \mathcal{G}_{\mathrm{SP}}$ such that $Z(G;q')=0$.
\end{corollary}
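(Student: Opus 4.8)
The plan is to deduce Corollary~\ref{cor:q>32/27} from Theorem~\ref{thm:to escape or not to escape} by exhibiting, for every fixed $q>32/27$, a series-parallel graph $G$ whose effective edge interaction $y_G(q)$ satisfies $|y_G(q)|>1$. Since Theorem~\ref{thm:to escape or not to escape} only requires the existence of such a $G$ for the \emph{given} value $q_0=q$ (it then produces zeros at nearby parameters automatically), the task reduces to a purely real one-variable analysis: track the orbit of $0$ under the two operations $y\mapsto y_1 y_2$ (parallel) and $y\mapsto f_q(f_q(y_1)f_q(y_2))$ (series) from Lemma~\ref{lem:formulas effective}, and show that for $q>32/27$ one can force the effective edge interaction to leave the closed unit disk.

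The natural building block is iteration of a single map. First I would recall from the Remark after Lemma~\ref{lem:formulas effective} that $y_{P_2}=y_{K_2\bowtie K_2}=\frac{q-1}{q-2}$, and more generally consider the family obtained by repeatedly putting a graph in series with a single edge: if $G$ has effective edge interaction $y$, then $G\bowtie K_2$ has effective edge interaction $f_q(f_q(y)f_q(0))=f_q((1-q)f_q(y))$. Composing with parallel operations, a cleaner choice is to analyze the map $\psi_q(y):=f_q(f_q(y)^2)$, which corresponds to taking two copies of $G$ in parallel and then in series with an edge (or some such fixed gadget built from $G$). The key point is that $\psi_q$ has $y=0$ among its relevant dynamics, and I would examine the fixed points of $\psi_q$ and the derivative there: for $q>32/27$ the relevant fixed point of the series-type recursion becomes repelling, exactly mirroring the threshold that appears in Lemma~\ref{lem:(32/27)} (where the construction of an invariant disk inside $B_{\sqrt{q-1}}$ breaks down precisely at $q=32/27$). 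Once the fixed point is repelling and $0$ is not itself that fixed point, the forward orbit of $0$ cannot stay bounded inside the unit disk; I would make this quantitative by showing the orbit either escapes to a value of modulus $>1$ or leaves a small invariant neighborhood, and in the latter case a few more iterations push the modulus past $1$.

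Concretely, the steps in order: (1) identify the explicit gadget — iterated series compositions with an edge, possibly interleaved with a parallel doubling — and write down the induced real map $g_q$ on effective edge interactions using Lemma~\ref{lem:formulas effective}; (2) compute its fixed point(s) $y^*(q)$ on the real line and the derivative $g_q'(y^*)$, and verify that $|g_q'(y^*)|>1$ exactly when $q>32/27$ (this is where the cubic $q-1=r^3-r^2-r$ / the number $32/27$ will reappear, so it should dovetail with the computations already done in Lemma~\ref{lem:(32/27)}); (3) check that the orbit of $0$ under $g_q$ is not eventually fixed at $y^*$ and that $g_q$ has no other attracting behavior trapping the orbit in $B_1$; (4) conclude that some iterate $y_n=g_q^{(n)}(0)$ has $|y_n|>1$, that $y_n\in\mathcal{E}(q)$ by construction, and invoke Theorem~\ref{thm:to escape or not to escape}.

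The main obstacle I anticipate is step (3): ruling out that the orbit of $0$ converges to the repelling fixed point from the "wrong side" or gets trapped by some other invariant set inside the unit disk. For a real one-dimensional map this is usually handled by a monotonicity or sign argument — showing $g_q$ maps a suitable real interval containing $0$ but not $y^*$ into itself in a way that marches the orbit monotonically away from $y^*$ and toward the boundary of the disk — but getting the interval bookkeeping exactly right (and handling the involution $f_q$, whose pole at $y=1$ can send things to $\infty$, which is actually \emph{good} here since $|\infty|>1$) will require care. A possible shortcut, which I would try first, is to avoid dynamics entirely: pick $G$ to be a fixed explicit generalized theta graph (parallel composition of $k$ copies of $P_2$), compute $y_G(q)=\left(\frac{q-1}{q-2}\right)^k$, and note this exceeds $1$ in modulus for $q\in(2,\infty)$ and all $k$, and for $q\in(32/27,2)$ one can instead use $f_q$ of such a gadget or a slightly larger explicit construction; checking that the virtual interaction $f_q(y_G(q))$ or some two-step composite exceeds $1$ on the whole interval $(32/27,2)$ may already suffice and would sidestep the dynamical argument.
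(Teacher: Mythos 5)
Your overall strategy is the paper's: iterate the map $g(z)=f_q(z^2)$ starting from $0$ (each step alternating between an effective and a virtual interaction of an explicit series-parallel graph), show the orbit leaves the closed unit disk for $q>32/27$, and invoke Theorem~\ref{thm:to escape or not to escape}. The threshold does come from the cubic you anticipate: the fixed-point equation $g(z)=z$ reads $q=(z-1)^2(z+1)$, whose maximum on $(-1,0]$ is $32/27$, attained at $z=-1/3$. But your central mechanism --- locate a fixed point $y^*$ and show it is \emph{repelling} for $q>32/27$ --- is misdirected: for $q>32/27$ the map $g$ has \emph{no} real fixed point in $(-1,0]$ at all, and that absence is exactly what resolves your step~(3). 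Since $g(0)=1-q<0$ and $g(z)\neq z$ on $(-1,0)$, one gets $g(z)<z$ throughout $(-1,0]$, so the orbit of $0$ is strictly decreasing; if it stayed in $(-1,0]$ it would converge to a fixed point of $g$ in $[-1,0]$, which does not exist (note $g(z)\to-\infty$ as $z\to-1^+$), so some iterate drops to $\leq-1$. No derivative computation and no trapping argument are needed. As written, your step~(2) (``verify $|g_q'(y^*)|>1$'') cannot be carried out because $y^*$ does not exist in the relevant range, and your step~(3) is left as an acknowledged obstacle rather than resolved.

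Two further points. First, the boundary case where the orbit lands exactly on $-1$ (so the next iterate is $\infty$) has modulus exactly $1$, not $>1$; the paper handles it by noting (Remark~\ref{rem:no weird things can happen}) that the relevant interaction is a non-constant rational function of $q$, so perturbing $q$ pushes the value strictly outside the unit disk while keeping it finite. Your remark that hitting the pole is ``good since $|\infty|>1$'' glosses over this. Second, your proposed shortcut via a single fixed generalized theta graph cannot cover $(32/27,3/2)$: there $\left|\tfrac{q-1}{q-2}\right|<1$, so $\left(\tfrac{q-1}{q-2}\right)^k$ stays inside the unit disk for every $k$, and more generally no bounded-depth gadget suffices as $q\downarrow 32/27$ since the number of iterations of $g$ required to escape is unbounded there; the dynamical argument is genuinely needed.
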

\begin{proof}
Consider the map $g(z)=f_q(z^2)$. We claim that $g(z)<z$ for any $z\in(-1,0]$. As $g(0)=1-q<0$, it is sufficient to show that $g(z)\neq z$ for any $z\in(-1,0)$. Or equivalently,
\begin{align*}
    q\neq (z-1)^2(z+1).
\end{align*}
The maximal value of $(z-1)^2(z+1)$ on the interval $(-1,0]$ is $32/27$ (which is achieved at $-1/3$), thus the claim holds.

We next claim that there exists $k$ such that $g^{\circ k}(0)\leq -1$. 
(Here $g^{\circ k}$ denotes the $k$-fold iterate of the map $g$.)
Suppose not, then since the sequence $\{g^{\circ k}(0)\}_{k\geq 0}$ is decreasing it must have a limit $L$.
By construction, $L\in[-1,0]$ and it must be a fixed point of the map $g$. Since $\lim_{z\to -1^{+}}g(z)=-\infty$, it follows that $g$ has no fixed points in $[-1,0]$, a contradiction.

We also claim that $g^{\circ k}(0)$ is an element of $\mathcal{E}(q) \cup f_q(\mathcal{E}(q))$ for any integer $k\geq0$.
Indeed this follows by induction, the base case being $k=0$.
Assuming that $g^{\circ i}(0)\in \mathcal{E}(q)$ for some $i\geq 0$, it follows that $g^{\circ i}(0)^2\in  \mathcal{E}(q)$ by Lemma~\ref{lem:formulas effective} and therefore $g^{\circ i+1}(0)\in f_q(\mathcal{E}(q))$.
And similarly, if $g^{\circ i}(0)\in f_q(\mathcal{E}(q))$ for some $i\geq 0$, it follows that $g^{\circ i}(0)=f_q(y)$ for some $y\in \mathcal{E}(q)$ and hence by Lemma~\ref{lem:formulas effective}, $g^{\circ i+1}(0)=f_q(f_q(y)^2)\in  \mathcal{E}(q)$.

To finish the proof, we choose $k\in\mathbb{N}$ such that $g^{\circ k}(0)\leq -1$.
If the inequality is actually strict, so $g^{\circ k}(0) < -1$, the result now directly follows from Theorem~\ref{thm:to escape or not to escape}, since $g^{\circ k}(0)$ is an element of $\mathcal{E}(q)\cup f_q(\mathcal{E}(q))$.
If on the other hand $g^{\circ k}(0)=-1$, then $g^{\circ k+1}(0)=\infty$. For even $k$, we see that $g^{\circ k}(0)$ is an effective interaction. As a rational function of $q$, it cannot be constant $-1$ by Remark~\ref{rem:no weird things can happen}. So the value of $g^{\circ k}(0)$ for some $q'$ arbitrarily close to $q$ is outside the unit disk and we again apply Theorem~\ref{thm:to escape or not to escape}. For odd $k$ we see that $g^{\circ k+1}(0)$ is an effective interaction and cannot be constant $\infty$, again by Remark~\ref{rem:no weird things can happen}.
Hence there again exists $q'$ arbitrarily close to $q$ where the value is finite and outside the unit disk and we again can apply Theorem~\ref{thm:to escape or not to escape}.
\end{proof}

Our next corollary gives us Theorem~\ref{thm:Re(q)>3/2}.

\begin{corollary}\label{cor:Re(q)>3/2}
Let $q\in \mathbb{C}$ such that $\Re(q) > 3/2$. Then there exists $q'$ arbitrarily close to $q$ and $G\in \mathcal{G}_{\mathrm{SP}}$ such that $Z(G;q')=0$.
\end{corollary}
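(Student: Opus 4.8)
The plan is to deduce Corollary~\ref{cor:Re(q)>3/2} from Theorem~\ref{thm:to escape or not to escape} by exhibiting, for a given $q$ with $\Re(q)>3/2$, an effective edge interaction or virtual interaction of modulus greater than $1$. The natural first move is to look at the simplest building blocks: the single edge has $y_{K_2}=0$, and its series square, the path $P_2$, has $y_{P_2}=\tfrac{q-1}{q-2}$ (as computed in the remark following Lemma~\ref{lem:formulas effective}). More generally, parallel composition multiplies effective interactions, and we can apply $f_q$ to pass to virtual interactions. So the first thing I would do is compute $|f_q(y_{P_2})|$ and, more usefully, iterate: set $h(z)=y_{K_2\bowtie (\text{something})}$ so that we get a one-parameter family whose modulus we can track. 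Concretely, consider the map $z\mapsto f_q(f_q(z)\,f_q(0))=f_q((1-q)f_q(z))$, which is the effect on the effective interaction of taking a series composition with a single edge; iterating this starting from $0$ produces effective interactions of the path graphs $P_n$.

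The key computation is then to understand the dynamics of the Möbius map $M(z)=f_q\bigl((1-q)f_q(z)\bigr)$ (series composition with an edge) or the closely related map coming from taking parallel compositions. Since a composition of Möbius maps is Möbius, $M$ has at most two fixed points, and the iterates $M^{\circ n}(0)$ either converge to an attracting fixed point or the orbit is more complicated. The plan is to show that when $\Re(q)>3/2$ some iterate of $0$ under an appropriate Möbius map (series with an edge, possibly combined with parallel doubling) lands outside the closed unit disk, OR that the relevant fixed point itself has modulus exceeding $1$; in the latter degenerate case (orbit converging to the fixed point from inside) one instead argues as in Corollary~\ref{cor:q>32/27}, using Remark~\ref{rem:no weird things can happen} to perturb $q$ slightly so that a genuine effective interaction of the graph exceeds $1$ in modulus. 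I expect the cleanest route is: let $G_n$ be the $n$-fold parallel composition of $P_2$ with itself, so $y_{G_n}=\left(\tfrac{q-1}{q-2}\right)^n$; then $|y_{G_n}|>1$ for all large $n$ precisely when $\left|\tfrac{q-1}{q-2}\right|>1$, i.e. when $|q-1|>|q-2|$, i.e. when $\Re(q)>3/2$. That is exactly the hypothesis, so $y_{G_n}\in\mathcal{E}(q)$ has modulus greater than $1$ for $n$ large, and Theorem~\ref{thm:to escape or not to escape} applies directly.

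So the argument reduces to the elementary observation that $\{q:|q-1|>|q-2|\}$ is the half-plane $\Re(q)>3/2$: writing $q=x+iy$, $|q-1|^2-|q-2|^2 = (x-1)^2-(x-2)^2 = 2x-3$, which is positive iff $x>3/2$. Given such a $q$, pick $n$ with $\left|\tfrac{q-1}{q-2}\right|^n>1$; since $P_2 = K_2\bowtie K_2$ is a series-parallel graph and $G_n$ is its $n$-fold parallel composition, $G_n\in\mathcal{G}_{\mathrm{SP}}$ and $y_{G_n}(q)=\left(\tfrac{q-1}{q-2}\right)^n$ has modulus exceeding $1$. (One should note $q\neq 2$ is automatic since $\Re(q)>3/2$ does not force $q=2$, but if $q=2$ we would need a separate remark — however $q=2$ has $\Re(q)=2>3/2$ and $y_{P_2}$ has a pole there, so one instead perturbs: by Remark~\ref{rem:no weird things can happen} the rational function $q\mapsto y_{G_n}(q)$ is not constant $\infty$, so at nearby $q'$ it is finite with modulus still $>1$, and we invoke the theorem at $q'$.) Then Theorem~\ref{thm:to escape or not to escape} gives $q'$ arbitrarily close to $q$ and $G\in\mathcal{G}_{\mathrm{SP}}$ with $Z(G;q')=0$.

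The main obstacle, such as it is, is really just bookkeeping around the excluded values $q\in\{0,1,2\}$ and the points where $y_{P_2}$ or its powers hit $\infty$ or are otherwise degenerate; the genuinely substantive content — that $\Re(q)>3/2$ is equivalent to $|q-1|>|q-2|$ and hence to the blow-up of the powers of the path-ratio — is immediate. If one wanted the statement uniformly (avoiding the perturbation near $q=2$), an alternative is to use a different two-terminal gadget whose effective interaction is a non-constant rational function equal to $\tfrac{q-1}{q-2}$ away from poles but regular at $q=2$; but since the theorem only asks for zeros arbitrarily close to $q$, the perturbation argument via Remark~\ref{rem:no weird things can happen} is the simplest and is the step I would actually write down.
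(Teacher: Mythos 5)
Your proposal is correct and is essentially the paper's own argument: both hinge on the effective edge interaction $y_{P_2}=\tfrac{q-1}{q-2}$ of the path of length two having modulus greater than $1$ exactly when $\Re(q)>3/2$, followed by a direct appeal to Theorem~\ref{thm:to escape or not to escape} (the paper verifies the half-plane correspondence by M\"obius geometry, you by the computation $|q-1|^2-|q-2|^2=2\Re(q)-3$). The parallel powers $G_n$ and the $q=2$ perturbation are harmless but unnecessary --- $n=1$ already works, and $q_0=2$ is handled inside Theorem~\ref{thm:to escape or not to escape} itself.
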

\begin{proof}
Consider the path $P_2$ of length $2$, which is the series composition of two single edges.
Therefore, by Lemma~\ref{lem:formulas effective} its effective edge interaction is given by 
\[
f_q(f_q(0)^2)=f_q((1-q)^2)=\frac{q-1}{q-2}.
\]
Now the M\"obius transformation $q\mapsto \frac{q-1}{q-2}$ maps the half plane $\{z\mid \Re(z)\geq 3/2\}$ to the complement of the unit disk, since $\infty\mapsto 1$, $3/2\mapsto -1$ and the angle that the image of $\{z\mid \Re(z)= 3/2\}$ makes with $\mathbb{R}$ at $-1$ is $90$ degrees and since $0\mapsto 1/2$.
The result now directly follows from Theorem~\ref{thm:to escape or not to escape}.
\end{proof}


\subsection{Proof of Theorem~\ref{thm:to escape or not to escape}}

 We first introduce some definitions inspired by~\cite{de2021zeros}.
Let $\mathcal{G}$ be a family of two-terminal graphs. Let $q_0\in \hat{\mathbb{C}}$. 
Then we call $q_0$ \emph{passive for $\mathcal{G}$} if there exists an open neighborhood $U$ around  $q_0$ such that the family of ratios $\{q\mapsto R(G;q)\mid G\in \mathcal{G}\}$ is a normal family on $U$, that is, if any infinite sequence of ratios contains a subsequence that converges uniformly on compact subsets of $U$ to a holomorphic function $f:U\to \hat{\mathbb C}$.
We call $q_0$ \emph{active for $\mathcal{G}$} is $q_0$ is not passive for $\mathcal{G}$.
We define the \emph{activity locus of $\mathcal{G}$} by
\begin{equation}
    \mathcal{A}_\mathcal{G}:=\{q_0\in \hat{\mathbb{C}}\mid q_0 \text{ is active for } \mathcal{G}\}.
\end{equation}
Note that the activity locus is a closed subset of $\hat{\mathbb{C}}$.

We next state Montel's theorem, see~\cite{CarlesonGamelinBook,MilnorBook} for proofs and further background.
\begin{theorem}[Montel] \label{thm:montel}
Let $\mathcal{F}$ be a family of rational functions on an open set $U\subseteq \hat{\mathbb{C}}$. 
If there exists three distinct points $a,b,c\in \hat{\mathbb{C}}$ such that for all $f\in \mathcal{F}$ and all $u\in U$, $f(u)\notin \{a,b,c\}$, then $\mathcal{F}$ is a normal family on $U$.
\end{theorem}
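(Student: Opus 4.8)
The plan is to prove the statement in the normalized form where the three omitted values are $0,1,\infty$, and then to exploit the fact that the thrice-punctured sphere is hyperbolic. First I would reduce to this normalization: choose a M\"obius transformation $m$ with $m(a)=0$, $m(b)=1$, $m(c)=\infty$ and replace $\mathcal{F}$ by $\{m\circ f \mid f\in\mathcal{F}\}$. Since post-composition with a fixed M\"obius map is a homeomorphism of $\hat{\mathbb C}$ that carries normal families to normal families, this is harmless, and now every $f\in\mathcal{F}$ maps $U$ into $X:=\hat{\mathbb C}\setminus\{0,1,\infty\}$. Because normality is a local property, it further suffices to prove it on each disk $D\Subset U$, which I equip with its Poincar\'e (hyperbolic) metric $\rho_D$.

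The key input is that $X$ carries a complete conformal metric $\rho_X$ of constant curvature $-1$; equivalently, $X$ admits a holomorphic universal covering $\pi\colon\mathbb{D}\to X$. This is exactly where the hypothesis of \emph{three} omitted points is used: for two points $\hat{\mathbb C}\setminus\{0,\infty\}=\mathbb{C}^{*}$ is covered by $\mathbb{C}$ rather than by $\mathbb{D}$, and indeed the family $\{z\mapsto z^{n}\}$ omits $0,\infty$ but is not normal. Granting the covering, the Schwarz--Pick lemma applies: since $D$ is simply connected, every $f\colon D\to X$ lifts through $\pi$ to a holomorphic $\tilde f\colon D\to\mathbb{D}$, and the classical Schwarz--Pick estimate on the disk gives $f^{*}\rho_X\le\rho_D$ for \emph{every} $f\in\mathcal{F}$.

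Next I would compare $\rho_X$ with the spherical metric $\sigma$ on $\hat{\mathbb C}$. Writing both as conformal densities on $X$, the ratio $\sigma/\rho_X$ is continuous and positive on $X$ and tends to $0$ at each of the three punctures, since $\rho_X$ blows up there (like $|z|^{-1}(\log\tfrac1{|z|})^{-1}$ near $z=0$) while $\sigma$ stays bounded; hence $\sigma\le C\,\rho_X$ on $X$ for some constant $C$. Combined with Schwarz--Pick this yields $f^{*}\sigma\le C\,\rho_D$ for all $f\in\mathcal{F}$, i.e. the family is uniformly Lipschitz from $(D,\rho_D)$ into the compact metric space $(\hat{\mathbb C},\sigma)$, hence equicontinuous on compact subsets of $D$. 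By the Arzel\`a--Ascoli theorem every sequence in $\mathcal{F}$ then has a subsequence converging uniformly on compacta in the spherical metric, and a locally uniform limit of holomorphic maps into $\hat{\mathbb C}$ is again holomorphic; this is precisely normality on $D$, and therefore on $U$.

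The deepest ingredient by far is the existence of the hyperbolic covering $\pi\colon\mathbb{D}\to X$ used in the second paragraph, which rests on uniformization (concretely, one may take $\pi$ to be the elliptic modular function); once this is in hand the remaining steps are soft. An alternative route avoiding the metric comparison is to lift a given sequence $f_n$ to $\tilde f_n\colon D\to\mathbb{D}$, apply the elementary ``little Montel'' theorem (a locally bounded family of holomorphic functions is normal, via Cauchy estimates and Arzel\`a--Ascoli) to extract $\tilde f_n\to\tilde f\colon D\to\overline{\mathbb{D}}$, and then project by $\pi$. The one delicate point there is the maximum-principle case $|\tilde f|\equiv 1$, where one must check that $f_n=\pi\circ\tilde f_n$ converges to a constant puncture; it is precisely to sidestep this boundary analysis that I prefer the spherical-metric argument above.
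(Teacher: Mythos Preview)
The paper does not supply its own proof of Montel's theorem; it merely states the result and refers the reader to the textbooks of Carleson--Gamelin and Milnor. Your sketch is correct and is essentially the standard argument presented in those references: reduce to the omitted set $\{0,1,\infty\}$ by a M\"obius change of coordinates, use the uniformization of the thrice-punctured sphere by the unit disk (via the modular function) together with Schwarz--Pick to get a uniform contraction from the Poincar\'e metric on a small disk into the hyperbolic metric on $X$, compare the latter with the spherical metric, and conclude equicontinuity and hence normality by Arzel\`a--Ascoli. The remark that three omitted points are essential (two would give $\mathbb{C}^{*}$, which is parabolic) and the alternative lifting argument are both apt.
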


Montel's theorem combined with activity and Lemma~\ref{lem:Z=0 R-=-1} give us a very quick way to demonstrate the presence of chromatic zeros.

\begin{lemma}\label{lem:activity gives zeros}
Let $q_0\in \mathbb{C}\setminus\{0,1,2\}$ 
and suppose that $q_0$ is contained in the activity locus of $\mathcal{G}_{\mathrm{SP}}$.
Then there exists $q$ arbitrarily close to $q_0$ and $G\in \mathcal{G}_{\mathrm{SP}}$ such that $Z(G;q)=0$.
\end{lemma}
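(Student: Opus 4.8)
The plan is to use the contrapositive together with Montel's theorem (Theorem~\ref{thm:montel}) and the equivalence in Lemma~\ref{lem:Z=0 R-=-1}. Suppose, for contradiction, that there is an open neighborhood $U$ of $q_0$ (which we may shrink to avoid $0,1,2$, since $q_0\notin\{0,1,2\}$) such that no $q\in U$ is a chromatic zero of any series-parallel graph. By Lemma~\ref{lem:Z=0 R-=-1}, applied at each $q\in U$, this means that for every $G\in\mathcal{G}^*_{\mathrm{SP}}$ the value $R(G;q)$ never lies in $\{0,-1,\infty\}$ for any $q\in U$. In other words, the family $\mathcal{F}:=\{q\mapsto R(G;q)\mid G\in\mathcal{G}^*_{\mathrm{SP}}\}$, viewed as rational functions on $U$, omits the three distinct values $a=0$, $b=-1$, $c=\infty$ on all of $U$. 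Montel's theorem then tells us that $\mathcal{F}$ is a normal family on $U$.

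The remaining point is to upgrade normality of $\{R(G;q)\mid G\in\mathcal{G}^*_{\mathrm{SP}}\}$ to normality of $\{R(G;q)\mid G\in\mathcal{G}_{\mathrm{SP}}\}$, i.e.\ to include the series-parallel graphs whose terminals $s,t$ are joined by an edge. This is easy: as noted just after~\eqref{eq:ratio}, if $G\in\mathcal{G}_{\mathrm{SP}}\setminus\mathcal{G}^*_{\mathrm{SP}}$ then $q\mapsto R(G;q)$ is identically $0$. So the family of ratios over all of $\mathcal{G}_{\mathrm{SP}}$ is the family over $\mathcal{G}^*_{\mathrm{SP}}$ together with the single constant function $0$; adjoining one holomorphic function to a normal family keeps it normal (any sequence either uses the constant $0$ infinitely often, in which case it has a constant convergent subsequence, or eventually lies in the normal subfamily). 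Hence $\{q\mapsto R(G;q)\mid G\in\mathcal{G}_{\mathrm{SP}}\}$ is normal on $U$, so $q_0$ is passive for $\mathcal{G}_{\mathrm{SP}}$, contradicting the hypothesis that $q_0\in\mathcal{A}_{\mathcal{G}_{\mathrm{SP}}}$.

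I do not expect a serious obstacle here; the lemma is essentially a packaging of Montel's theorem with the combinatorial equivalence already established in Lemma~\ref{lem:Z=0 R-=-1}. The one place to be slightly careful is the direction of the logic: Lemma~\ref{lem:Z=0 R-=-1} is an equivalence at a \emph{fixed} $q$, so to conclude that $\mathcal{F}$ omits $\{0,-1,\infty\}$ on all of $U$ one applies the ``(i)$\Leftrightarrow$(iii)'' equivalence pointwise for each $q\in U$ — using precisely the assumption (from the contradiction hypothesis) that (i) fails throughout $U\setminus\{\text{nothing}\}$, which is legitimate since we arranged $U\cap\{0,1,2\}=\varnothing$. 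After that the argument is just normality bookkeeping.
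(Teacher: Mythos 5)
Your proof is correct and follows essentially the same route as the paper's: assume no nearby zeros, invoke Lemma~\ref{lem:Z=0 R-=-1} to see that the ratios omit $\{0,-1,\infty\}$ on a neighborhood, apply Montel's theorem, and contradict activity. Your extra step of applying Montel only to the subfamily indexed by $\mathcal{G}^*_{\mathrm{SP}}$ and then adjoining the constant-$0$ ratios of the remaining graphs is a careful touch the paper glosses over, since the full family over $\mathcal{G}_{\mathrm{SP}}$ does contain the constant function $0$ and so does not literally omit that value.
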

\begin{proof}
Suppose not. 
Then by Lemma~\ref{lem:Z=0 R-=-1}, there must be an open neighborhood of $q_0$ on which family of ratios must avoid the points $-1,0,\infty$.
Montel's theorem then gives that the family of ratios must be normal on this neighborhood, contradicting the assumptions of the lemma.
\end{proof}


\begin{lemma}\label{lem:activity outside unit disk}
Let $q_0\in\mathbb{C}$, and assume there exists an effective edge interaction $y\in\mathcal{E}(q_0)$ or a virtual interaction $y\in f_{q_0}(\mathcal{E}(q_0))$ such that $|y|>1$. Then $q_0$ is contained in the activity locus of $\mathcal{G}_{\mathrm{SP}}$.
\end{lemma}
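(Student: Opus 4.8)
The plan is to show that if some $G_0\in\mathcal G_{\mathrm{SP}}$ has $|y_{G_0}(q_0)|>1$ (the virtual case being symmetric, see below), then the family of effective edge interactions $\{q\mapsto y_G(q)\}$ — equivalently the family of ratios — fails to be normal at $q_0$, by building from $G_0$ a sequence of series-parallel graphs whose effective edge interactions at $q_0$ escape to $\infty$ while their values near $q_0$ cannot all stay bounded away from a third point. Concretely, parallel composition multiplies effective edge interactions (Lemma~\ref{lem:formulas effective}), so taking the $k$-fold parallel composition $G_0^{\parallel k}$ gives $y_{G_0^{\parallel k}}(q_0)=y_{G_0}(q_0)^k$, which tends to $\infty$ since $|y_{G_0}(q_0)|>1$. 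If the family were normal on a neighborhood $U$ of $q_0$, some subsequence of $q\mapsto y_{G_0}(q)^k$ would converge locally uniformly to a holomorphic $h:U\to\hat{\mathbb C}$; since the limit at $q_0$ is $\infty$, we would need $h\equiv\infty$ on $U$, forcing $|y_{G_0}(q)|\geq 1$ on all of $U$. But $y_{G_0}$ is a nonconstant rational function (by Remark~\ref{rem:no weird things can happen} it is not constant $\infty$, and if it were a nonzero constant it would have modulus $1$ on a neighborhood only if it were a unimodular constant, which a rational function with equal leading coefficients in numerator and denominator — value $1$ at $\infty$ — is not unless identically $1$, again excluded), so $|y_{G_0}|$ is nonconstant near $q_0$ and drops below $1$ somewhere in $U$: contradiction. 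Hence $q_0$ is active.

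More carefully, I would argue via the ratios directly so that Lemma~\ref{lem:activity gives zeros} applies downstream. Recall $R(G;q)=y_G(q)/(q-1)$, so $|y_G(q_0)|>1$ with $q_0\neq 1$ gives a concrete nonzero value of $R(G_0;q_0)$; the point is to produce a sequence $G_k$ with $R(G_k;q_0)\to\infty$ and argue non-normality. Taking $G_k:=G_0^{\parallel k}$ works at the level of $y$; translating back, $R(G_k;q_0)=y_{G_0}(q_0)^k/(q_0-1)\to\infty$. For non-normality on any neighborhood $U$ of $q_0$: a normal-family limit $f$ of a subsequence of $\{R(G_k;\cdot)\}$ would have $f(q_0)=\infty$, hence (being a nonconstant-or-constant holomorphic map to $\hat{\mathbb C}$ that hits $\infty$) either $f\equiv\infty$ or $q_0$ is an isolated pole. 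In the isolated-pole case $|R(G_k;q)|\to\infty$ locally uniformly on a punctured neighborhood, which is impossible because each $R(G_k;\cdot)$ has zeros accumulating — actually cleaner: work with $1/R$ or with $y_{G_0}^k$ and use Hurwitz, since $y_{G_0}^k\to 0$ wherever $|y_{G_0}|<1$ would be forced, contradicting $y_{G_0}^k(q_0)=\infty$. I would phrase the final contradiction as: the nonconstant rational function $y_{G_0}$ takes a value of modulus $<1$ at some point arbitrarily close to $q_0$ (by the open mapping theorem, since $y_{G_0}$ is nonconstant and $|y_{G_0}(q_0)|>1$ is not a local max of $0$ unless... — in fact simply because $y_{G_0}$ is open, its image of a small disk around $q_0$ is an open set containing $y_{G_0}(q_0)$, but that alone does not give modulus $<1$; instead use that $|y_{G_0}|$ is a nonconstant harmonic-type function with no local minimum at a zero-free point, and $y_{G_0}$ does have a zero, namely at the leading-coefficient-cancellation point or simply because $y_{G_0}$ is nonconstant rational hence surjective onto $\hat{\mathbb C}$, so $|y_{G_0}|<1$ somewhere, and by connectedness of $\hat{\mathbb C}\setminus(\text{finitely many points})$ there are such points near $q_0$... ) — here I would instead just invoke the clean argument: $\{q\mapsto y_{G_0}(q)^k\}_k$ is not normal at $q_0$ because it converges to $\infty$ at $q_0$ but $y_{G_0}$ is nonconstant so has a zero $q_1$; along any subsequence converging locally uniformly on a neighborhood $U\ni q_0$, the limit is $\infty$ on all of $U$ if $q_1\in U$ fails... the honest route is Montel: it suffices to exhibit three values avoided, which is exactly Lemma~\ref{lem:activity gives zeros}'s job — so really I only need non-normality, and I get it from the dichotomy that a locally uniform limit of $y_{G_0}^k$ is $\equiv\infty$ (impossible: near a zero $q_1$ of $y_{G_0}$, which lies in some neighborhood of $q_0$ only if we shrink... it need not) — **the genuinely clean statement** is: since $y_{G_0}$ is nonconstant rational, $|y_{G_0}|$ is not locally constant, so on every neighborhood $U$ of $q_0$ there are points with $|y_{G_0}|>1$ and, unless $q_0$ is on the boundary curve $|y_{G_0}|=1$ which has empty interior, also points with $|y_{G_0}|<1$; at the former $y_{G_0}^k\to\infty$, at the latter $y_{G_0}^k\to 0$, so no subsequence converges locally uniformly — done.

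**The main obstacle.** The delicate point is the virtual-interaction case and making the "escape" argument fully rigorous at the level of the activity locus (a topological/normal-families notion) rather than just pointwise. For the virtual case, if $y=f_{q_0}(y_{G_0}(q_0))$ with $|y|>1$, I would use that $f_q(y_G)$ is, up to taking a series composition with a single edge (Lemma~\ref{lem:formulas effective}: $f_q(y_{K_2})=1-q$, and $y_{G\bowtie K_2}=f_q(f_q(y_G)f_q(1-q))=f_q(f_q(y_G)\cdot 0)$ — that degenerates, so instead use a different gadget), realized as an effective edge interaction of an actual series-parallel graph, or argue directly that the family $\{f_{q_0}\circ R(G;\cdot)\}$ being non-normal forces $\{R(G;\cdot)\}$ non-normal since $f_{q_0}$ is a Möbius map (a homeomorphism of $\hat{\mathbb C}$), hence preserves normality — this last observation is the cleanest: normality is invariant under post-composition with a fixed Möbius transformation, so a virtual interaction of modulus $>1$ is as good as an effective one. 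I expect writing this $f_{q_0}$-invariance remark carefully, and handling the edge case where $y_{G_0}$ could a priori be constant (ruled out by Remark~\ref{rem:no weird things can happen} together with the equal-leading-coefficient observation), to be where most of the care goes; the escape-to-$\infty$ core is short.
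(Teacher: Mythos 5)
There is a fatal gap at the heart of your argument. Your ``genuinely clean statement'' asserts that on every neighbourhood of $q_0$ there are points with $|y_{G_0}|>1$ \emph{and} points with $|y_{G_0}|<1$. But the hypothesis is $|y_{G_0}(q_0)|>1$, so by continuity there is a neighbourhood $U$ of $q_0$ on which $|y_{G_0}|>1+\epsilon$ everywhere; on such a $U$ the powers $y_{G_0}^k$ converge uniformly on compact sets to the constant function $\infty$, which is a perfectly good holomorphic map $U\to\hat{\mathbb{C}}$ under the paper's definition of normality. Hence your family of parallel powers is \emph{normal} at $q_0$ and the contradiction evaporates. Your earlier variant fails for the same reason: arguing that a limit $h\equiv\infty$ forces $|y_{G_0}|\geq 1$ on all of $U$ and that $|y_{G_0}|$ must drop below $1$ somewhere in $U$ is not a contradiction, because normality at $q_0$ only requires \emph{some} neighbourhood, which one is free to shrink until $|y_{G_0}|>1$ throughout. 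In short: escape to $\infty$ alone never certifies activity.

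This is precisely the difficulty the paper's proof is built to overcome, and it needs an idea your proposal does not contain. The paper first chooses $N$ so large that the image of a small neighbourhood $U'$ of $q_0$ under $q\mapsto f_q(y_G(q))^N$ --- whose arguments sweep the entire circle as $N$ grows, since the image neighbourhood lies strictly outside the unit disk --- meets both a sector $S_1$ near $\infty$ that every $f_q$ ($q\in U'$) maps inside the unit circle and a sector $S_2$ mapped outside it. Only \emph{after} this interleaving with $f_q$ does one take $m$-th powers: the family $\{q\mapsto f_q(f_q(y_G(q))^N)^m\}_{m\in\mathbb{N}}$ then has, within the single neighbourhood $U'$, points whose values tend to $0$ and points whose values tend to $\infty$, and that genuine dichotomy is what rules out a holomorphic limit. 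Your remark that post-composition with a fixed M\"obius map preserves normality is correct as far as it goes (though note the virtual interactions involve $f_q$ with $q$ varying, not the fixed $f_{q_0}$), but it cannot repair the core argument, which as written proves nothing.
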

\begin{proof}
We will show that for every open $U'$ around $q_0$ there exists a family of series-parallel graphs $\mathcal{G}$ such that $\{q\mapsto y_G(q) \mid G\in\mathcal{G} \}$ is non-normal. 
This of course implies non-normality of the family $\{q\mapsto R(G;q) \mid G\in\mathcal{G}\}$ on $U'$ and hence that $q_0$ is contained in the activity locus $\mathcal{A}_{\mathcal{G}_{\mathrm{SP}}}$.

We will first assume that $y\in f_{q_0}(\mathcal{E}(q_0))$ and $|y|>1$.
 Suppose $y=f_{q_0}(y_{q_0}(G)))$ for some series-parallel graph $G$. The virtual interaction is not a constant function of $q$, because at $q=\infty$ the virtual interaction is $\infty$, cf. Remark~\ref{rem:no weird things can happen}. Therefore any open neighborhood $U'$ of $q_0$ is mapped to an open neighborhood $U$ of $y$ and we may assume that $U'$ is small enough, such that $U$ lies completely outside the closed unit disk. Now the pointwise powers $\{u^n\mid u\in U\}_{n\in\mathbb{N}}$ converge to $\infty$ and the complex argument of the powers $\arg(\{u^n\mid u\in U\})=n \arg(U)$ cover the entire unit circle for $n$ large enough.

Let us denote the unit circle by $C\subseteq\mathbb{C}$. 
Then $f_q(C)$ is a straight line through $1$ for every $q$. Inside the Riemann sphere, $\hat{\mathbb{C}}$, these lines are circles passing through $\infty$. For $U'$ small enough and $q\in U'$, and in a neighborhood of $\infty$, these circles will lie in two sectors. More precisely, there exists $R$ large enough such that the argument of the complex numbers in $\bigcup_{q\in U'}f_q(C) \cap \{z\in\mathbb{C} \mid |z| >R\}$ are contained in two small intervals. Therefore we can find two sectors $S_1$ and $S_2$ around $\infty$ such that $f_q(S_1)$ lies inside $C$ for all $q\in U'$ and $f_q(S_2)$ lies outside of $C$ for all $q\in U'$. Because the pointwise powers $\{u^n\mid u\in U\}$ converge towards $\infty$ and the argument of the complex numbers are spread over the entire unit circle, there must be an $N$ for which $\{u^N\mid u\in U\}$ intersects with both $S_1$ and $S_2$. Then $\{f_q(f_q(y_G(q))^N) \mid q\in U'\}$ has points inside and outside the unit circle. Now the family $\{q\mapsto f_q(f_q(y_G(q))^N)^m \mid m\in\mathbb{N}\}$ is non-normal on $U'$.
Indeed, the values inside the unit circle converge to $0$, and the values outside the unit circle converge to $\infty$. So any limit function of any subsequence can therefore not be holomorphic.
An easy induction argument, as in the proof of Corollary~\ref{cor:q>32/27}, shows that $f_q(f_q(y_G(q))^N)^m$ is the effective edge interaction of the parallel composition of $m$ copies of the series composition of $N$ copies of the graph $G$.

For the case $y\in \mathcal{E}(q_0)$ with $|y|>1$, we note again that this interaction cannot be a constant function of $q$, because at $q=\infty$ the value must be $1$, cf. Remark~\ref{rem:no weird things can happen}. 
If we perform the same argument as above, we obtain a non-normal family of virtual interactions on $U'$. 
Applying $f_q$ to this family, produces a non-normal family on $U'$ of effective edge interactions of series compositions of copies of parallel compositions of copies of the graph $G$.

\end{proof}

\begin{remark}\label{rem:active family of graphs}
For later reference we record the family of graphs that provides the non-normal family of interactions/ratios. 
In the case that we have a virtual interaction $|f_{q_0}(y_G(q_0))|>1$ for a graph $G$, the family consists of $N$ copies of $G$ in series, and $m$ copies of this in parallel.
For the case of an effective edge interaction $|y_G(q_0)|>1$, we instead put $N$ copies of $G$ in parallel, and $m$ copies of this in series.
\end{remark}

\begin{proof}[Proof of Theorem~\ref{thm:to escape or not to escape}]
For $q\in\mathbb{C}\setminus\{0,1,2\}$ where either the interaction or the virtual interaction escapes the unit disk, the theorem is a direct consequence of Lemmas~\ref{lem:activity gives zeros} and \ref{lem:activity outside unit disk}. If for $q\in\{0,1,2\}$ there is an interaction or virtual interaction escaping the unit disk, this holds for all $q'$ in a neighborhood as well. At these values, we already know that zeros accumulate, so they will accumulate at $q$ as well.
\end{proof}

We now explain how to strengthen Corollary~\ref{cor:|q-1|>1} to generalized theta graphs. Let $\Theta$ denote the family of all generalized theta graphs.
\begin{corollary}\label{cor:|q-1|>1 theta}
Let $q\in \mathbb{C}$ such that $|1-q|>1$. 
Then there exists $q'$ arbitrarily close to $q$ and $G\in  \Theta$ such that $Z(G;q')=0$.
\end{corollary}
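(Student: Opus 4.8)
The plan is to mimic the proof of Corollary~\ref{cor:|q-1|>1} but to track carefully which graphs arise, checking that each step stays inside the family $\Theta$ of generalized theta graphs (up to the allowed ``single edge'' base case). Recall that the effective edge interaction of a single edge $K_2$ is $y_{K_2}=0$, so $f_q(0)=1-q$ is a virtual interaction, and by hypothesis $|1-q|>1$. Thus $q$ satisfies the escape condition of Theorem~\ref{thm:to escape or not to escape} via a virtual interaction of the single-edge graph $G=K_2$. So it suffices to rerun the proof of Lemma~\ref{lem:activity outside unit disk} with $G=K_2$ and observe that the non-normal family of graphs produced consists of generalized theta graphs.

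Concretely, first I would recall from Remark~\ref{rem:active family of graphs} exactly which graphs give the non-normal family: when the escape happens through a virtual interaction $|f_{q_0}(y_G(q_0))|>1$ of a graph $G$, the non-normal family consists of $m$ parallel copies of the series composition of $N$ copies of $G$. With $G=K_2$, the series composition of $N$ copies of $K_2$ is simply the path $P_N$ on $N+1$ vertices, and the parallel composition of $m$ copies of $P_N$ is by definition a generalized theta graph $\Theta_{N,m}$. Hence the non-normal family $\{q\mapsto y_{\Theta_{N,m}}(q)\mid m\in\mathbb N\}$ on a suitable small neighborhood $U'$ of $q$ already lives inside $\Theta$. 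The argument of Lemma~\ref{lem:activity outside unit disk} then shows $q$ is active \emph{for the family $\Theta$} (not merely for $\mathcal{G}_{\mathrm{SP}}$): the ratios of these theta graphs do not form a normal family near $q$.

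Next I would run the Montel argument of Lemma~\ref{lem:activity gives zeros}, but with $\mathcal F$ the family of ratios $\{q\mapsto R(G;q)\mid G\in\Theta\}$. If no graph in $\Theta$ had a chromatic zero arbitrarily close to $q$, then on some neighborhood of $q$ this family of ratios would avoid $-1$; it also avoids $0$ and $\infty$ there, the latter by Remark~\ref{rem:no weird things can happen} since a generalized theta graph with at least one path of length $\geq 2$ has no edge between $s$ and $t$ (and for those few theta graphs that are a single edge or a multigraph of parallel edges the ratio is constantly $0$, which we may simply exclude from $\mathcal F$, or handle the finitely many exceptional $q$ separately). By Montel's theorem $\mathcal F$ would then be normal near $q$, contradicting the non-normality just established. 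For the excluded values $q\in\{0,1,2\}$ one argues as in the proof of Theorem~\ref{thm:to escape or not to escape}: the escape condition $|1-q|>1$ is open, so it holds in a punctured neighborhood where zeros accumulate, and hence they accumulate at $q$ as well.

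The only real point requiring care — and the part I expect to be the main obstacle — is verifying that the value $0$ and $\infty$ (in addition to $-1$) are genuinely avoided by the \emph{theta-graph} ratios near $q$, since Lemma~\ref{lem:Z=0 R-=-1} was stated for $\mathcal{G}_{\mathrm{SP}}$ rather than for $\Theta$. Here I would not invoke Lemma~\ref{lem:Z=0 R-=-1} directly; instead I would argue pointwise using Remark~\ref{rem:no weird things can happen}: for each generalized theta graph $G$ with at least two paths, $y_G$ is a non-constant rational function never equal to $-1$, $\infty$, or $1-q$, so $R(G;q)=y_G(q)/(q-1)$ is never $-1$, never $\infty$ (away from $q=1$), and we need only separately rule out $R(G;q)=0$, i.e.\ $Z^{\mathrm{same}}(G;q)=0$. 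But $Z^{\mathrm{dif}}(G;q)\neq 0$ forces $Z(G;q)=Z^{\mathrm{same}}(G;q)$, so a zero of $Z^{\mathrm{same}}$ with $Z^{\mathrm{dif}}\neq 0$ is again a chromatic zero of $G\in\Theta$; thus the set of $q$ where some theta ratio hits $\{-1,0,\infty\}$ is contained in (the closure of) the chromatic zeros of $\Theta$ together with the finitely many poles of the relevant Möbius maps, and Montel applies on the complement, giving the claim.
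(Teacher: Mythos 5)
Your overall strategy is the paper's: use $f_q(0)=1-q$ as an escaping virtual interaction, note via Remark~\ref{rem:active family of graphs} that the resulting non-normal family consists of generalized theta graphs (so $q$ is active for $\Theta$ itself), and then apply Montel to the theta-graph ratios. That part is fine. The gap is in the step you yourself flag as the main obstacle: ruling out the values $0$ and $\infty$ for the theta-graph ratios. Your argument there contains a genuine error. You claim that ``a zero of $Z^{\mathrm{same}}$ with $Z^{\mathrm{dif}}\neq 0$ is again a chromatic zero of $G\in\Theta$''; but $Z(G;q)=Z^{\mathrm{same}}(G;q)+Z^{\mathrm{dif}}(G;q)$, so if $Z^{\mathrm{same}}(G;q)=0$ and $Z^{\mathrm{dif}}(G;q)\neq 0$ then $Z(G;q)=Z^{\mathrm{dif}}(G;q)\neq 0$: the ratio being $0$ does \emph{not} produce a chromatic zero of a theta graph. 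Likewise $R(G;q)=\infty$ means $Z(G\parallel K_2;q)=0$, and $G\parallel K_2$ is not a theta graph. You also misuse Remark~\ref{rem:no weird things can happen}: it says $y_G$ is not \emph{constantly} $\infty$, not that $y_G$ never takes the value $\infty$ at particular $q$ (it does, at every zero of $Z^{\mathrm{dif}}$); and the set of such points, taken over the infinite family $\Theta$, is not ``finitely many poles of the relevant M\"obius maps.''

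The missing idea, which is what the paper uses, is specific to theta graphs and to the region $|1-q|>1$: for a generalized theta graph $G$, identifying $s$ with $t$ produces a bouquet of cycles glued at a vertex (whose chromatic polynomial equals $Z^{\mathrm{same}}(G;q)$), and adding the edge $st$ produces cycles glued along an edge (whose chromatic polynomial equals $Z^{\mathrm{dif}}(G;q)$). Both factor, up to powers of $q$ and $q-1$, into chromatic polynomials of cycles, all of whose zeros lie in $B_1(1)$. Hence for $|q-1|>1$ neither $Z^{\mathrm{same}}(G;q)$ nor $Z^{\mathrm{dif}}(G;q)$ can vanish, so the theta-graph ratios genuinely omit $0$ and $\infty$ on the whole region, and Montel forces the remaining omitted value $-1$ to be attained, i.e.\ a chromatic zero of a theta graph. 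Without this cycle-factorization observation your Montel argument does not close.
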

\begin{proof}
Note that $y=f_q(0)=1-q$ is a virtual activity such that $|y|>1.$
From Lemma~\ref{lem:activity outside unit disk} and Remark~\ref{rem:active family of graphs} we in fact find that $q$ is in the activity locus of $\Theta$.
By Theorem~\ref{thm:montel} (Montel's theorem) we may thus assume that there exists $G\in \Theta$ such that $R(G;q)\in \{-1,0,\infty\}$.
We claim that the ratio must in fact equal $-1$, meaning that $q$ is in fact a zero of the chromatic polynomial of the generalized theta graph $G$.

The argument follows the proof of `(iii) $\Rightarrow$ (i)' in Lemma~\ref{lem:Z=0 R-=-1}.
Suppose that the ratio is $\infty$. 
Then we add an edge between the two terminals and realize that the resulting graph is equal to a number cycles glued together on an edge. Since chromatic zeros of cycles are all contained in $B_1(1)$, this implies that the ratio could not have been equal to $\infty$. 
If the ratio equals $0$, then we again obtain a chromatic zero of a cycle after identifying the start and terminal vertices.
This proves the claim and hence finishes the proof.
\end{proof}

\section{Chromatic zeros of leaf joined trees from independence zeros}\label{sec:leaf joined}
This section is devoted to proving Theorem~\ref{thm:leaf joined}.
Fix a positive integer $\Delta\geq 2$ and write $d=\Delta-1$.
Given a rooted tree $(T,v)$ consider the two-terminal graph $\hat T$ obtained from $(T,v)$ by identifying all leaves (except $v$) into a single vertex $u.$ We take $v$ as the start vertex and $u$ as the terminal vertex of $\hat T$. 
Following Royle and Sokal~\cite{RoyleSokal}, we call $\hat T$ a \emph{leaf joined tree}.
We abuse notation and say that a leaf joined tree $\hat T$ has maximum degree $\Delta=d+1$ if all its vertices except possibly its terminal vertex have degree at most $\Delta.$
We denote by $\mathcal{T}_d$ the collection of leaf joined trees of maximum degree at most $d+1$ for which the start vertex has degree at most $d$.

Our strategy will be to use Lemma~\ref{lem:Z=0 R-=-1} in combination with an application of Montel's theorem, much like in the previous section.
To do so we make use of an observation of Royle and Sokal in the appendix of the arXiv version of~\cite{RoyleSokal} saying that ratios of leaf joined trees, where the underlying tree is a Cayley tree, are essentially the \emph{occupation ratios} (in terms of the independence polynomial) of the Cayley tree. 
We extend this relation here to all leaf-joined trees and make use of a recent description of the zeros of the independence polynomial on bounded degree graphs of large degree due to the first author, Buys and Peters~\cite{bencs2021limit}.



\subsection{Ratios and occupation ratios}
For a graph $G=(V,E)$ the independence polynomial in the variable $\lambda$ is defined as
\begin{equation}\label{eq:def ind}
    I(G;\lambda)=\sum_{\substack{I\subseteq V\\I \text{ ind.}}}\lambda^{|I|},
\end{equation}
where the sum ranges over all sets of $G$. 
(Recall that a set of vertices $I\subseteq V$ is called \emph{independent} if no two vertices in $I$ form an edge of $G$.)
We define the \emph{occupation ratio} of $G$ at $v\in V$ as the rational function
\begin{equation}\label{eq:ind ratio}
    P_{G,v}(\lambda):=\frac{\lambda I(G\setminus N[v];\lambda)}{I(G-v;\lambda)},
\end{equation}
where $G-v$ (resp. $G\setminus N[v]$) denotes the graph obtained from $G$ by removing $v$ (resp. $v$ and all its neighbors).
We define for a positive integer $\Delta$, $\mathcal{G}_\Delta$ to be the collection of rooted graphs $(G,v)$ of maximum degree at most $\Delta$ such that the root vertex, $v$, has degree at most $d:=\Delta-1$.
We next define the relevant collection of occupation ratios,
\[
\mathcal{P}_\Delta:=\{ P_{G,v}\mid (G,v)\in \mathcal{G}_\Delta\}.
\]
A parameter $\lambda_0\in \mathbb{C}$ is called \emph{active for $\mathcal{G}_\Delta$} if the family $\mathcal{P}_\Delta$ is not normal at $\lambda_0$.

We will use the following alternative description of $\mathcal{P}_\Delta$.
Define 
\[
F_{\lambda,d}(z_1,\ldots,z_{d})=\frac{\lambda}{\prod_{i=1}^d(1+z_i)}
\]
and let $\mathcal{R}_{\lambda,d}$ be the family of rational maps, parametrized by $\lambda$, and defined by 
\begin{itemize}
    \item[(i)] the identify map $z\mapsto z$ is contained in $\mathcal{R}_{\lambda,d}$
    \item[(ii)] if $r_1,\ldots,r_d\in \mathcal{R}_{d,\lambda}$, then $F_{\lambda,d}(r_1(z),\ldots,r_d(z))\in \mathcal{R}_{\lambda,d}$.
\end{itemize}
\begin{lemma}[Lemma 2.4 in~\cite{bencs2021limit}] \label{lem:ind ratios}
Let $\Delta\geq 2$ be an integer and write $d=\Delta-1$.
Then 
\[
\mathcal{P}_\Delta=\{\lambda\mapsto r_\lambda(0)\mid r_\lambda\in \mathcal{R}_{\lambda,d}\}. 
\]
\end{lemma}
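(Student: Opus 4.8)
The statement to prove is Lemma~\ref{lem:ind ratios} (quoted as Lemma 2.4 from~\cite{bencs2021limit}): namely that the family $\mathcal{P}_\Delta$ of occupation ratios of rooted graphs in $\mathcal{G}_\Delta$ coincides with $\{\lambda\mapsto r_\lambda(0)\mid r_\lambda\in\mathcal{R}_{\lambda,d}\}$. The plan is to prove the two inclusions separately, both by induction on the number of vertices of the graph (equivalently, on the depth of the recursive construction of $\mathcal{R}_{\lambda,d}$), using the standard branching recursion for the occupation ratio of the independence polynomial.

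\textbf{Step 1: the recursion for occupation ratios.} First I would record the key identity. If $(G,v)$ is a rooted graph and $v$ has neighbors $u_1,\dots,u_k$ with $k\le d$, let $G_i$ be a suitable ``subtree-like'' decomposition of $G-v$ into rooted pieces $(G_i,u_i)$; then from the defining decompositions $I(G-v;\lambda)=\prod_i I(G_i;\lambda)$-type factorizations and $I(G\setminus N[v];\lambda)=\prod_i I(G_i-u_i;\lambda)$ one gets
\[
P_{G,v}(\lambda)=\frac{\lambda}{\prod_{i=1}^{k}\bigl(1+P_{G_i,u_i}(\lambda)\bigr)}.
\]
This is exactly the map $F_{\lambda,d}$ applied to $(P_{G_1,u_1},\dots,P_{G_k,u_k})$, after padding with ``empty'' coordinates (adding isolated vertices, whose occupation ratio is the zero function, i.e.\ using $r(z)=0$, or more precisely noting that $1+0=1$ so missing factors contribute trivially) to bring the arity up to exactly $d$. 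Care is needed here: the factorization of $I(G-v)$ over the pieces $(G_i,u_i)$ is only literally a product when the $u_i$-subgraphs are vertex-disjoint, so I would phrase the induction in terms of constructing $G$ from disjoint rooted pieces joined to a new root, which is exactly the structure the recursion $\mathcal{R}_{\lambda,d}$ mirrors.

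\textbf{Step 2: $\mathcal{P}_\Delta\subseteq\{r_\lambda(0)\}$.} Induct on $|V(G)|$. Base case: the single-vertex rooted graph has $P_{G,v}(\lambda)=\lambda$, which equals $F_{\lambda,d}(0,\dots,0)$ evaluated at $z=0$ — hence of the form $r_\lambda(0)$ with $r_\lambda=F_{\lambda,d}(\mathrm{id}\cdot 0,\dots)$; actually the cleanest base case is the isolated vertex giving the identity map, and then one composition step gives $\lambda$. For the inductive step, decompose $G-v$ into the connected pieces hanging off the neighbors of $v$; each such piece with its attaching vertex as root is a rooted graph of max degree $\le\Delta$ whose root has degree $\le d$ (the root lost its edge to $v$), so the inductive hypothesis gives $P_{G_i,u_i}=r^{(i)}_\lambda(0)$ for some $r^{(i)}_\lambda\in\mathcal{R}_{\lambda,d}$. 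Then by Step 1, $P_{G,v}=F_{\lambda,d}(r^{(1)}_\lambda,\dots)(0)$ which is $r_\lambda(0)$ for the element $r_\lambda=F_{\lambda,d}(r^{(1)}_\lambda,\dots,r^{(d)}_\lambda)\in\mathcal{R}_{\lambda,d}$ (padding to arity $d$ with the identity map composed appropriately, or with a map whose value at $0$ is $0$).

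\textbf{Step 3: the reverse inclusion $\{r_\lambda(0)\}\subseteq\mathcal{P}_\Delta$.} Induct on the construction of $r_\lambda\in\mathcal{R}_{\lambda,d}$. If $r_\lambda=\mathrm{id}$ then $r_\lambda(0)=0$, realized by e.g.\ an edge rooted at one endpoint (or by the convention that we only need this as an intermediate object). If $r_\lambda=F_{\lambda,d}(r^{(1)}_\lambda,\dots,r^{(d)}_\lambda)$, then by induction each $r^{(i)}_\lambda(0)=P_{G_i,u_i}(\lambda)$ for some $(G_i,u_i)\in\mathcal{G}_\Delta$; form $G$ by taking disjoint copies of the $G_i$ and a new root $v$ joined to each $u_i$. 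Then $v$ has degree $d$, all other degrees go up by at most one so stay $\le\Delta$ (here one uses crucially that $u_i$ had degree $\le d$, so it now has degree $\le\Delta$), and $(G,v)\in\mathcal{G}_\Delta$. By Step 1, $P_{G,v}(\lambda)=F_{\lambda,d}(P_{G_1,u_1},\dots)(\lambda)=r_\lambda(0)$. One small subtlety: the identity map in $\mathcal{R}_{\lambda,d}$ evaluated at $0$ is $0$, and I should check $0\in\mathcal{P}_\Delta$ as well (occupation ratio at an isolated vertex is $\lambda\cdot 1/1=\lambda$, not $0$; but $0=r_\lambda(0)$ requires $r_\lambda(0)=0$, achieved by a leaf adjacent to an occupied-forced vertex or simply noting the recursion permits a neighbor contributing a subgraph with ratio $\infty$ — in practice one handles this by allowing the ``empty slot'' reading and never literally needs a standalone graph with ratio $0$, since slots with input $0$ just drop out of the product).

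\textbf{Main obstacle.} The genuinely delicate point is bookkeeping the degree constraints and the arity/padding: $\mathcal{R}_{\lambda,d}$ uses maps of arity exactly $d$, while a vertex $v$ may have fewer than $d$ neighbors, so I need a clean way to pad — either by inserting coordinates that plug in a rational map evaluating to $0$ at $z=0$ (so $1+0=1$ disappears from the product), and correspondingly on the graph side allowing ``phantom'' neighbors that are forced-occupied leaves. Getting the two paddings to match up, and making sure the induction on graph size in Step 2 genuinely decreases (the pieces $G_i$ are proper subgraphs since $v$ is removed), is the crux. Everything else is the standard deletion/contraction-style factorization of the independence polynomial, which I would state once and reuse. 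Since this lemma is merely cited from~\cite{bencs2021limit}, in the paper itself I would either give this short induction or simply refer the reader there.
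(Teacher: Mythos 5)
The paper does not actually prove this statement; it is imported verbatim as Lemma~2.4 of~\cite{bencs2021limit}, so there is no internal proof to compare against and your argument has to stand on its own. Your Step~3 (reverse inclusion) is essentially fine: padding the arity-$d$ map $F_{\lambda,d}$ with identity slots corresponds to the new root having fewer than $d$ children, and the degree bookkeeping (each $u_i$ had degree at most $d$ and gains one edge) works out. The problem is Step~2. As defined in this paper, $\mathcal{G}_\Delta$ is a family of rooted \emph{graphs}, not rooted trees, and the branching recursion $P_{G,v}=\lambda/\prod_i(1+P_{G_i,u_i})$ is only valid when the neighbors of $v$ lie in pairwise distinct components of $G-v$. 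You notice this and respond by ``phrasing the induction in terms of constructing $G$ from disjoint rooted pieces joined to a new root,'' but that changes the statement being proved: your induction then covers only graphs admitting such a decomposition at every level, and says nothing about, e.g., a cycle or a triangle rooted at a vertex (for the triangle, $P_{G,v}=\lambda/(1+2\lambda)$, and no factorization over the two neighbors is available). This is precisely the nontrivial content of the cited lemma: the forward inclusion $\mathcal{P}_\Delta\subseteq\{\lambda\mapsto r_\lambda(0)\}$ for arbitrary graphs requires reducing a general rooted graph to a tree with the same degree bounds, which is done via Weitz's self-avoiding-walk tree theorem (the occupation ratio of $(G,v)$ equals that of its SAW tree, whose maximum degree and root degree obey the same bounds). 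Without that ingredient, your induction establishes the lemma only for trees. Note also that the forward inclusion is exactly the direction this paper later relies on (non-normality of $\mathcal{P}_\Delta$ must transfer to the recursion family), so the gap is not in a harmless direction.

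A secondary, smaller issue is the base case. The identity map contributes the constant function $0$ to the right-hand side, but no rooted graph has occupation ratio identically $0$, since $I(G\setminus N[v];\lambda)$ has constant term $1$; your proposed realizations (an edge rooted at an endpoint gives $\lambda/(1+\lambda)$, a single vertex gives $\lambda$) do not produce $0$. This is a matter of conventions in~\cite{bencs2021limit} (the identity is a formal seed of the recursion rather than a ratio one must realize), but as written your Step~3 base case is not correct, and the hand-waving about ``occupied-forced vertices'' and ``empty slots'' should be replaced by the precise statement that identity inputs simply drop out of the product and never need to be realized as standalone graphs.
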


We will next show that, up to a simple factor, the occupation ratios of graphs of maximum degree at most $\Delta$ are contained in the family of chromatic ratios of leaf joined tree of maximum degree at most $\Delta.$ Define 
\[\lambda(q,d):=\frac{(q-1)^d}{(q-2)^{d+1}}.
\]

\begin{proposition}\label{prop:chromatic ratios}
Let $\Delta\geq 2$ be a positive integer and write $d=\Delta-1$.
Then
\[
\left\{q\mapsto \frac{q-2}{q-1}r_{\lambda(q,d)}(0)\mid r_\lambda\in \mathcal{R}_{\lambda,d}\right\}\subseteq \{q\mapsto R(\hat{T};q)\mid (T,v)\in \mathcal{T}_d\}.
\]
\end{proposition}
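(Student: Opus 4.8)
The plan is to prove the inclusion by induction on the recursive construction of the family $\mathcal{R}_{\lambda,d}$, translating the recursion for occupation ratios (via $F_{\lambda,d}$) into the recursion for chromatic ratios of leaf joined trees under the tree-building operations. The key is to pin down the right correspondence between an element $r_\lambda \in \mathcal{R}_{\lambda,d}$ and a leaf joined tree $(T,v) \in \mathcal{T}_d$, and then to check that the conjugation $q \mapsto \frac{q-2}{q-1}r_{\lambda(q,d)}(0)$ intertwines the two recursions. I expect the cleanest route is to first establish an exact identity at the level of effective edge interactions $y_{\hat T}$ (or equivalently ratios $R(\hat T; \cdot)$) for a single ``gluing'' step: if a rooted tree $(T,v)$ is obtained by taking $k \le d$ rooted trees $(T_1, v_1), \dots, (T_k, v_k)$ and attaching a new root $v$ joined to each $v_i$, then the leaf joined tree $\hat T$ is built from the $\hat T_i$ by a series composition (the new edge $vv_i$ followed by $\hat T_i$, reoriented so the joined-leaf vertex is the terminal) and then a parallel composition of these $k$ pieces — all sharing the single joined leaf vertex $u$. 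Applying Lemma~\ref{lem:formulas effective} to this composition should produce exactly the $F_{\lambda,d}$ recursion after the change of variables.

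Concretely, I would carry out the following steps. First, fix the dictionary: to $r_\lambda \in \mathcal{R}_{\lambda,d}$ built from the rule (ii) using $r_1, \dots, r_d$, associate the leaf joined tree whose root has the corresponding children. For the base case, the identity map $z \mapsto z$ should correspond to the single edge $K_2$ (viewed as a leaf joined tree from the rooted path on two vertices), for which one checks $R(K_2; q) = 0$ matches $r_\lambda(0) = 0$ under the conjugation — so the base case is really the statement $\frac{q-2}{q-1}\cdot 0 = 0 = R(K_2;q)$. Second, for the inductive step: assume each $r_i$ corresponds to a leaf joined tree $\hat T_i \in \mathcal{T}_d$ with $R(\hat T_i; q) = \frac{q-1}{q-2} r_{i,\lambda(q,d)}(0)$ (note I would keep track of which side of the Möbius conjugation is natural for ratios vs. occupation ratios — it may be cleaner to work with $y_{\hat T} = (q-1)R(\hat T;q)$ throughout, and to use the involution $f_q$). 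Then compute $y$ of the graph obtained by putting each $\hat T_i$ in series with a fresh edge and taking the parallel composition of all $d$ of these, using Lemma~\ref{lem:formulas effective}: the parallel composition multiplies the $y$'s, and each series-with-an-edge step applies $f_q(f_q(\cdot)f_q(0)) = f_q(f_q(\cdot)\cdot(1-q))$. Third, verify by direct algebra that this equals $\frac{q-1}{q-2}$ times $F_{\lambda(q,d),d}$ evaluated at the $r_i(0)$'s — this is where the specific formula $\lambda(q,d) = \frac{(q-1)^d}{(q-2)^{d+1}}$ must fall out, and it is presumably exactly the computation Royle and Sokal did in the appendix of the arXiv version of \cite{RoyleSokal}, extended from Cayley trees to general trees.

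A couple of technical points need care. The degree bound: a node of $r_\lambda$ built from $d$ sub-ratios corresponds to a tree vertex of degree $d+1 = \Delta$ (the $d$ children plus one parent), except the root, which has degree $\le d$ since we only allow the root to have at most $d$ children and it has no parent — this matches the definition of $\mathcal{T}_d$ and of $\mathcal{G}_\Delta$, so one should make sure the bookkeeping of ``degree at most $d$ at the root, at most $d+1$ elsewhere'' is preserved by the construction. Also, rule (ii) of $\mathcal{R}_{\lambda,d}$ always uses exactly $d$ sub-ratios, some of which may be the identity map $z \mapsto z$; under the dictionary the identity map corresponds to a child that is itself just the edge $K_2$, which is fine, so there is no issue with "fewer than $d$ children" — and having a child equal to $K_2$ in series with the new edge just gives a path of length two, consistent with $y_{P_2} = \frac{q-1}{q-2}$ computed in the Remark. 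The main obstacle I anticipate is getting the Möbius/conjugation bookkeeping exactly right — in particular tracking whether one wants $R$, $y = (q-1)R$, or $f_q(y)$ at each stage, and ensuring that the factor $\frac{q-2}{q-1}$ (rather than its inverse or $\pm$ a power of it) emerges — since a sign or an off-by-one in the exponent of $(q-1)/(q-2)$ would break the identification with $\lambda(q,d)$. Once the single-step identity $y_{\text{glue}}(q) = f_q\big(\prod_{i=1}^d f_q((q-1)\cdot \tfrac{1}{q-2} \cdot (\text{stuff}_i))\big)$-type formula is massaged into the $F_{\lambda,d}$ shape, the induction closes immediately and the inclusion follows, since every leaf joined tree so constructed lies in $\mathcal{T}_d$ by the degree bookkeeping above.
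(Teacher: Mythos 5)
Your proposal follows essentially the same route as the paper: the same induction on the recursive structure of $\mathcal{R}_{\lambda,d}$, the same identification of the gluing step with $\hat T = (K_2\bowtie\hat T_1)\parallel\cdots\parallel(K_2\bowtie\hat T_d)$, and the same computation via Lemma~\ref{lem:formulas effective} leading to the $F_{\lambda(q,d),d}$ recursion for the modified ratio. The only point left open is the conjugation factor you flag yourself: the algebra gives $\tfrac{q-1}{q-2}R(\hat T;q)=F_{\lambda(q,d),d}\bigl(\tfrac{q-1}{q-2}R(\hat T_1;q),\ldots,\tfrac{q-1}{q-2}R(\hat T_d;q)\bigr)$, so the inductive hypothesis should read $R(\hat T_i;q)=\tfrac{q-2}{q-1}\,r_{\lambda(q,d);i}(0)$ (your version has this factor inverted), after which the argument closes exactly as you describe.
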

\begin{proof}
Suppose that $r_\lambda\in \mathcal{R}_{\lambda,d}$ and that $r_\lambda(z)=F_{\lambda,d}(r_{\lambda;1}(z),\ldots,r_{\lambda;d}(z))$ for certain $r_{\lambda;i}\in \mathcal{R}_{\lambda,d}$.
We need to show that the map $q\mapsto \frac{q-2}{q-1}r_{\lambda(q,d)}(0)$ is equal to the ratio $R(\hat{T};q)$ for some rooted tree $(T,v)\in \mathcal{T}_d$.
By induction we may assume that there are leaf joined trees $\hat{T_1},\ldots \hat{T_d}\in \mathcal{T}_d$ such that there exists
$r_{\lambda;i}\in \mathcal{R}_{\lambda,d}$ for each $i=1,\ldots,d$ such that
\begin{equation}\label{eq:induction hyp}
q\mapsto \frac{q-2}{q-1}r_{\lambda(q,d);i}(0)=R(\hat{T_i};q).
\end{equation}
Note that the base case is covered since the map $q\mapsto 0$ is the ratio of the edge $\{v,u\}$.

Let $(T_1,v_1),\ldots,(T_d,v_d)$ be the underlying rooted trees of the $\hat{T_i}$.
Let $\hat{T}$ be the leaf joined tree whose underlying rooted tree $(T,v)$ is obtained from $(T_1,v_1),\ldots,(T_d,v_d)$ by adding a new root vertex $v$ and connecting it to all the $v_i$.
We claim that
\begin{equation}\label{eq:claim ratio}
    R(\hat{T};q)=q\mapsto \frac{q-2}{q-1}r_{\lambda(q,d),d}(0).
\end{equation}
To prove this we will first compute the effective edge interaction of $\hat{T}.$
To do so observe that $\hat{T}$ is obtained by first putting $K_2$ in series with $\hat{T_{i}}$ for $i=1,\ldots,d$ and then putting the resulting graphs in parallel. (Incidentally this shows that all leaf joined trees are series-parallel graphs).
In formulas this reads as
\begin{equation}\label{eq:construction T/U}
\hat{T}= (K_2\bowtie \hat{T_1})\parallel  (K_2\bowtie \hat{T_{2}}) \parallel \cdots \parallel  (K_2\bowtie \hat{T_{d}}).
\end{equation}
Suppose the graphs $\hat{T_i}$ have effective edge interaction $y_i$ ($i=1,\ldots,d$), then by Lemma~\ref{lem:formulas effective} $\hat{T}$ has effective interaction $y$ given by
\begin{equation}
y= \prod_{i=1}^d f_q(f_q(0)f_q(y_i))=\left(\frac{q-1}{q-2}\right)^d\prod_{i=1}^d\frac{1}{1+y_i/(q-2)}.
\end{equation}
Recall that $R(\hat{T};q)=y_G(q)/(q-1)$.
If we now define the \emph{modified ratio} $\widetilde{R}(G;q)=\frac{q-1}{q-2}R(G;q)$ for any two-terminal graph $G$, we can write this relation as
\begin{align*}
\widetilde{R}(\hat{T};q) &= \frac{\lambda(q,d)}{\prod_{i=1}^d (1+\widetilde{R}(\hat{T_i};q))}
\\
&=F_{\lambda(q,d),d}\left(\widetilde{R}(\hat{T_1};q),\ldots,\widetilde{R}(\hat{T_d};q)\right)
\\
&=r_{\lambda(d,q)}(0)
\end{align*}
by \eqref{eq:induction hyp}.
This finishes the proof.
\end{proof}

\begin{corollary}\label{cor:active implies active}
Let $\Delta\geq2$ be an integer and write $d=\Delta-1$.
Let $q_0\in \mathbb{C}\setminus \{1,2,1-d\}$.
If $\lambda(q_0,d)$ is active for $\mathcal{G}_\Delta$, then $q_0$ is active for ${\mathcal{T}_d}$.
\end{corollary}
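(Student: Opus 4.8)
The plan is to transport non-normality along the change of variables $q \mapsto \lambda(q,d)$ using Proposition~\ref{prop:chromatic ratios}. Suppose $\lambda_0 := \lambda(q_0, d)$ is active for $\mathcal{G}_\Delta$, meaning the family $\mathcal{P}_\Delta$ fails to be normal at $\lambda_0$. By Lemma~\ref{lem:ind ratios} we may identify $\mathcal{P}_\Delta$ with $\{\lambda \mapsto r_\lambda(0) \mid r_\lambda \in \mathcal{R}_{\lambda,d}\}$. The key point is that the map $\psi\colon q \mapsto \lambda(q,d) = \frac{(q-1)^d}{(q-2)^{d+1}}$ is holomorphic and nonconstant on a neighborhood of $q_0$, hence locally an open map, provided $q_0$ is not a critical point — but even at a critical point it is an open map, so in all cases it sends any open neighborhood $W$ of $q_0$ onto an open neighborhood of $\lambda_0$ (here is where the exclusions $q_0 \notin \{1, 2, 1-d\}$ matter: at $q=1$ the value is $0$ or $\infty$, at $q=2$ it is $\infty$, and we want $\psi$ to be a genuine holomorphic map $W \to \mathbb{C}$ with $\psi(q_0) = \lambda_0 \in \mathbb{C}$; one checks $\lambda(1-d, d)$ is precisely the value one must avoid, likely because it makes $q-2 = -d-1$ combine with the numerator in a degenerate way, or it is where $\frac{q-2}{q-1}$ has a pole relative to the occupation ratio).

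Concretely, I would argue by contradiction. Assume $q_0$ is passive for $\mathcal{T}_d$, so there is an open neighborhood $W$ of $q_0$ on which $\{q \mapsto R(\hat T; q) \mid (T,v) \in \mathcal{T}_d\}$ is a normal family. Shrink $W$ so that $\psi$ is holomorphic and bounded on $W$, avoids the bad values, and $\frac{q-2}{q-1}$ is holomorphic and nonvanishing on $W$. Take any sequence $(r_{\lambda}^{(n)})_n$ in $\mathcal{R}_{\lambda,d}$; by Proposition~\ref{prop:chromatic ratios} each $q \mapsto \frac{q-2}{q-1} r^{(n)}_{\lambda(q,d)}(0)$ equals some $R(\hat T_n; q)$ with $(T_n, v_n) \in \mathcal{T}_d$. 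Normality of the chromatic ratios on $W$ gives a subsequence of $R(\hat T_n; \cdot)$ converging locally uniformly on $W$ to a holomorphic $g\colon W \to \hat{\mathbb{C}}$. Then $\frac{q-1}{q-2} R(\hat T_n; q) = r^{(n)}_{\lambda(q,d)}(0)$ converges locally uniformly on $W$ to $\frac{q-1}{q-2} g(q) =: h(q)$, still holomorphic into $\hat{\mathbb{C}}$. Pulling back through $\psi^{-1}$ on a suitable open subset — or rather, observing that precomposition and postcomposition with fixed holomorphic maps preserve locally uniform convergence — we conclude that $\{\lambda \mapsto r_\lambda(0)\}$ is normal at $\lambda_0$, contradicting activity of $\lambda_0$.

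The one genuine subtlety is going from "normal as functions of $q$ on $W$" to "normal as functions of $\lambda$ near $\lambda_0$": since $\psi$ may be non-injective (it has degree up to $2d+1$ as a rational map and may have a critical point at $q_0$), we cannot simply invert it. The clean way is to phrase everything in terms of normal families on $W$ itself: activity of $\lambda_0$ for $\mathcal{G}_\Delta$ means there is a sequence $r^{(n)}_\lambda$ whose evaluations $\lambda \mapsto r^{(n)}_\lambda(0)$ have no locally-uniformly-convergent subsequence on any neighborhood of $\lambda_0$; composing with $\psi$, the functions $q \mapsto r^{(n)}_{\psi(q)}(0)$ then have no locally uniformly convergent subsequence on $W$ — this uses that $\psi$ is an open map sending $q_0$ to $\lambda_0$, so locally uniform convergence on $W$ would force locally uniform convergence of the original sequence on $\psi(W)$, a neighborhood of $\lambda_0$ (here one uses that a locally uniform limit through $\psi$ descends: if $f_n \circ \psi \to G$ locally uniformly on $W$ and $\psi$ is open, then $f_n \to G \circ (\text{local section of } \psi)$ locally uniformly near $\lambda_0$, since $\psi$ has local holomorphic sections away from its critical values — and if $q_0$ is a critical point we may simply pick a nearby regular point $q_0'$ in $W$ with $\psi(q_0')$ still arbitrarily close to $\lambda_0$, which suffices because activity is about every neighborhood). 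Finally, multiplying by the nonvanishing holomorphic factor $\frac{q-2}{q-1}$ preserves non-normality, so $\{q \mapsto \frac{q-2}{q-1} r^{(n)}_{\psi(q)}(0)\}$ is non-normal on $W$; by Proposition~\ref{prop:chromatic ratios} this is a subfamily of the chromatic ratios of $\mathcal{T}_d$, so $q_0$ is active for $\mathcal{T}_d$, as claimed.
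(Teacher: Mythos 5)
Your overall strategy --- transporting non-normality from $\lambda_0=\lambda(q_0,d)$ to $q_0$ through Proposition~\ref{prop:chromatic ratios} and the substitution $q\mapsto\lambda(q,d)$, then absorbing the nonvanishing factor $\tfrac{q-2}{q-1}$ --- is exactly the paper's, and the main branch of your argument (the non-critical case, using a local holomorphic section of $\psi$) is sound. But there is a gap at the one point where the paper's proof actually has content. Your descent from ``normal in $q$ on $W$'' to ``normal in $\lambda$ near $\lambda_0$'' requires a local holomorphic section of $\psi(q)=\lambda(q,d)$ passing through $q_0$ itself, i.e.\ it requires $q_0$ not to be a critical point of $\psi$. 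You leave this unverified: you admit you do not know why $q_0=1-d$ is excluded and guess (incorrectly) that it has to do with a degeneracy of $\tfrac{q-2}{q-1}$. The actual point is the computation
\[
\frac{\partial}{\partial q}\,\lambda(q,d)=-(q+d-1)\,\frac{(q-1)^{d-1}}{(q-2)^{d+2}},
\]
whose zeros are precisely $q=1-d$ and (for $d\geq2$) $q=1$, with a pole at $q=2$. Thus the hypothesis $q_0\notin\{1,2,1-d\}$ is exactly the statement that $\psi$ is a local biholomorphism at $q_0$; this single derivative computation is essentially the paper's entire proof, and it is the step you omit.

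Moreover, the fallback you keep in reserve for the critical-point case does not work. If $q_0$ were critical, you propose to pass to a nearby regular point $q_0'$ and deduce normality of $\mathcal{P}_\Delta$ near $\psi(q_0')$. But activity of $\lambda_0$ is a statement at $\lambda_0$: a family can fail to be normal at a single point while being normal at every nearby point (e.g.\ $\{z\mapsto nz\}_{n\in\N}$ is normal on $\hat{\C}\setminus\{0\}$ but not at $0$), so establishing passivity at points arbitrarily close to $\lambda_0$ gives no contradiction with activity at $\lambda_0$. Fortunately the fallback is never needed once the derivative is checked; the fix is simply to delete it and insert the computation above.
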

\begin{proof}
Note that the derivative of $\lambda(q,d)$ with respect to $q$ is given by
\[
-(q+d-1)\frac{(q-1)^{d-1}}{(q-2)^{d+2}}.
\]
Therefore the map $q\mapsto \lambda(q,d)$ is injective on a neighborhood of $q_0$ and the result follows from the previous proposition.
\end{proof}

\subsection{Proof of Theorem~\ref{thm:leaf joined}}
We are now ready to harvest some results from~\cite{bencs2021limit} and provide a proof of Theorem~\ref{thm:leaf joined}.

Let for an integer $\Delta\geq 2$ and $u\in \mathbb{C}$
\[
\lambda_\Delta(u):=\frac{-(\Delta-1)^{\Delta-1}u}{(\Delta-1+u)^\Delta}
\]
and define
\[
\mathcal{C}_\Delta:=\left\{\lambda_\Delta(u) \mid |u|<1\right\}.
\]
Define the following collection of active parameters
\[
\mathcal{N}_\Delta:=\{u\in B_{1/2}(1/2)\mid  \text{ the family } \mathcal{P}_\Delta \text{ is not normal at } \lambda_\Delta(-u)\}.
\]
\begin{theorem}\label{thm:active ind}
There exists $\Delta_0>0$ such that for all integers $\Delta\geq \Delta_0$ the set $\mathcal{N}_\Delta$ contains a nonempty open set and in particular is nonempty.
\end{theorem}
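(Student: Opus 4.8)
The plan is to deduce Theorem~\ref{thm:active ind} from the main result of~\cite{bencs2021limit}, which describes the closure of the set of zeros of independence polynomials of graphs of maximum degree $\Delta$ in terms of the map $\lambda_\Delta$ and the closed unit disk. Roughly, that work shows that for a parameter of the form $\lambda_\Delta(u)$ with $|u|\le 1$, one cannot expect normality of the occupation ratio family, because zeros of independence polynomials accumulate exactly on the boundary curve $\lambda_\Delta(\partial B_1(0))$, while in the interior region there is a phase-transition-type change of behavior of the ratios. Concretely, I would first recall from~\cite{bencs2021limit} the precise statement that identifies $\overline{\bigcup_{G}\{\lambda : I(G;\lambda)=0\}}$ (over graphs of max degree $\Delta$) and, more importantly, a statement to the effect that every $\lambda$ lying in the image under $\lambda_\Delta$ of the open unit disk — minus possibly the real ``cardioid-interior'' piece where the Gibbs uniqueness regime holds — is an active parameter for $\mathcal{P}_\Delta$. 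The reparametrization $u\mapsto \lambda_\Delta(-u)$ and the restriction to $u\in B_{1/2}(1/2)$ is a bookkeeping device to carve out a genuinely two-dimensional open piece of this active set that avoids the ``uniqueness interval.''

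The key steps, in order, would be: (1) State the relevant theorem from~\cite{bencs2021limit} giving a set $\mathcal{U}_\Delta\subseteq\mathbb{C}$ of parameters at which $\mathcal{P}_\Delta$ is non-normal, expressed through $\lambda_\Delta$ and $B_1(0)$; for large $\Delta$ this set is known to be a large region roughly filling out the image $\lambda_\Delta(B_1(0))$ outside the real Shearer-type interval. (2) Show that $\lambda_\Delta(-B_{1/2}(1/2))$ meets the interior of $\mathcal{U}_\Delta$ in an open set once $\Delta$ is large enough: since $B_{1/2}(1/2)\subseteq B_1(0)$ and touches the unit circle only at $1$, its image under the (locally biholomorphic away from critical points) map $\lambda_\Delta(-\cdot)$ is an open subset of $\lambda_\Delta(B_1(0))$, and for $\Delta\to\infty$ one checks this image is not swallowed by the excluded real interval. (3) Pull back non-normality: because $u\mapsto\lambda_\Delta(-u)$ is open (indeed locally biholomorphic on the relevant region, which one checks by computing $\lambda_\Delta'$ and noting its zeros lie outside $B_{1/2}(1/2)$ for large $\Delta$), the preimage $\mathcal{N}_\Delta$ of the non-normality locus is open and, by step (2), nonempty. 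Finally (4) conclude that $\mathcal{N}_\Delta$ contains a nonempty open set for all $\Delta\ge\Delta_0$.

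I expect the main obstacle to be step (1)–(2): correctly quoting the result of~\cite{bencs2021limit} in the form that actually gives \emph{non-normality of the occupation ratio family} (not merely accumulation of independence-polynomial zeros, which a priori is about the weaker statement of where $I(G;\lambda)=0$), and then verifying that the particular disk $B_{1/2}(1/2)$, after applying $\lambda_\Delta(-\cdot)$, lands in the \emph{interior} of that non-normality region rather than on its boundary or inside the Gibbs-uniqueness interval where the family \emph{is} normal. This requires an asymptotic-in-$\Delta$ analysis of where $\lambda_\Delta(-B_{1/2}(1/2))$ sits relative to the known Shearer/uniqueness threshold $\lambda^*(\Delta)\sim e/\Delta$ and the critical activity $\lambda_c(\Delta)\sim e/\Delta$ on the negative real axis; the point $u=1/2$ (center of the disk) maps to $\lambda_\Delta(-1/2)=\tfrac{(\Delta-1)^{\Delta-1}/2}{(\Delta-3/2)^\Delta}$, which for large $\Delta$ behaves like $\tfrac{1}{2(\Delta-1)}e^{1/2}+o(1/\Delta)$, so one must argue this (and a neighborhood) is a non-normal parameter, e.g.\ because it is not real-positive below threshold, or because it lies strictly inside the complex region where~\cite{bencs2021limit} proves chaotic behavior. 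The remaining steps, being openness of a locally biholomorphic map and a pullback of non-normality, should be routine.
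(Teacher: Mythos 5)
There is a genuine gap, and it lies exactly where you predicted the difficulty would be: in the description of the non-normality locus coming from \cite{bencs2021limit}. Your step (1) asserts that (roughly) every $\lambda$ in the image of the \emph{open} unit disk under $\lambda_\Delta$, away from a real interval, is active for $\mathcal{P}_\Delta$. This is essentially the opposite of the truth: $\lambda_\Delta(B_1(0))$ is the region where the relevant fixed point is attracting, it contains the Shearer disk and the uniqueness interval, and it was \emph{conjectured} to be entirely zero-free; \cite{PetersRegts} proves zero-freeness (hence normality) on a large part of it, and the point of \cite{Buys} and \cite{bencs2021limit} is only that the activity locus protrudes \emph{slightly} into $\lambda_\Delta(B_1(0))$ near certain boundary points. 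Concretely, your proposed witness $\lambda_\Delta(-1/2)\approx \tfrac{\sqrt{e}}{2(\Delta-1)}$ is a \emph{positive real} activity lying below the uniqueness threshold $\lambda_c(\Delta)=\lambda_\Delta(-1)=\tfrac{(\Delta-1)^{\Delta-1}}{(\Delta-2)^\Delta}\approx \tfrac{e}{\Delta}$, a regime in which $\mathcal{P}_\Delta$ is known to be normal; so the center of $B_{1/2}(1/2)$ is \emph{not} in $\mathcal{N}_\Delta$, and your step (2) --- that the image of the disk lands in the interior of the non-normality region --- fails. Indeed, if your step (1) were correct the theorem would be nearly trivial and would not require any threshold $\Delta_0$.

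The actual crux, which the proposal misses, is a boundary phenomenon at the single point where $\partial B_{1/2}(1/2)$ meets the unit circle, namely $u=1$, i.e.\ $\lambda_\Delta(-1)=\lambda_c(\Delta)$. In the rescaled coordinate of \cite{bencs2021limit} (where $(\Delta-1)\lambda_\Delta(-u)\to ue^{u}$) this corresponds to the point $e$ on the boundary of the limit set $\mathcal{U}_\infty$, and the image of the open disk $B_{1/2}(1/2)$ is locally the half-plane $\{\Re<e\}$ there. Since $(0,e)$ is passive and $(e,\infty)$ is active, everything hinges on the fact that $\partial\mathcal{U}_\infty$ meets the real axis at $e$ at an angle of $120$ degrees rather than $90$: only because of this non-perpendicularity does the activity locus spill over into $\{\Re<e\}$, hence into the image of the open disk, yielding (via the convergence statements of \cite{bencs2021limit} and the equivalence between accumulation of independence-polynomial zeros and activity from \cite{de2021zeros}) a nonempty open subset of $\mathcal{N}_\Delta$ for all large $\Delta$. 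Were that boundary differentiable at $e$, the activity locus could locally coincide with $\{\Re\geq e\}$ and $\mathcal{N}_\Delta$ could have empty interior. Your steps (3)--(4) (openness of $u\mapsto\lambda_\Delta(-u)$ and pulling back non-normality) are fine, but they only become useful after this geometric input, which is absent from the proposal.
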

\begin{proof}
This follows directly from \cite[Theorem 1.2 and 1.3]{bencs2021limit} combined with~\cite[Theorem 1]{de2021zeros} and the fact that the boundary of the set $\mathcal{U}_\infty$ (as defined in~\cite{bencs2021limit}) is not differentiable at $e$. 
Indeed, a close inspection of the function describing the part of the boundary with positive imaginary part near $e$ shows that it in fact makes an angle of $120$ degrees with the real axis.
\end{proof}
We now give a proof of Theorem~\ref{thm:leaf joined}.
\begin{proof}[Proof of Theorem~\ref{thm:leaf joined}]
Let $\Delta_0$ from the theorem above.
Fix any integer $\Delta\geq \Delta_0$ and write $d=\Delta-1$.
Choose any non real $u_0\in \mathcal{N}_\Delta$.
Define $q_0=1+d/u_0$ and observe that since the M\"obius transformation $u\mapsto 1+d/u$ maps the disk $B_{1/2}(1/2)$ onto the half plane $\{z\in \mathbb{C}\mid \Re (z)\geq d+1\}$, it follows that $\Re (q_0)>\Delta$.
Furthermore,
\[
\lambda(q_0,d)=\frac{(q_0-1)^d}{(q_0-2)^{d+1}}=\frac{(d/u_0)^d}{((d-u_0)/u_0)^{d+1}}=\frac{d^du_0}{(d-u_0)^{d+1}}=\lambda_\Delta(-u_0).
\]
Therefore, by Corollary~\ref{cor:active implies active} and Theorem~\ref{thm:active ind}, we obtain that $q_0\in \mathcal{A}_{\mathcal{T}_d}$, the activity locus of the family of ratios of the leaf joined trees contained in $\mathcal{T}_d$.
By Theorem~\ref{thm:montel} (Montel's theorem) we conclude that there must exist $q$ such that $\Re (q)>\Delta$ and a leaf joined tree $\hat{T}\in \mathcal{T}_d$ such that $R(\hat{T};q)\in \{0,-1,\infty\}$.

We now show that there exists a leaf joined tree of maximum degree $\Delta$ for which $q$ is zero of its chromatic polynomial. 
We cannot directly invoke Lemma~\ref{lem:Z=0 R-=-1}, but its proof will essentially give us what we need.

If the ratio, $R(\hat{T};q)$, is equal to $-1$ then $Z(\hat{T};q)=0$.
If the ratio equals $\infty$ we add an edge between the two terminal vertices such that $q$ is a chromatic zero of the resulting leaf joined tree, whose maximum degree is still $\Delta$.
Finally, suppose the ratio equals $0$. 
We know that $\hat{T}$ is the parallel composition of $d$ leaf joined trees $\hat{T_i}$ each in series with $K_2$ (see \eqref{eq:construction T/U}). 
Since the ratio equals $0$ we know by Lemma~\ref{lem:basic} that  $Z^{\mathrm{same}}(K_2\bowtie\hat{T_i};q)=0$ for some $i$.
Now putting the graph $K_2\bowtie \hat{T_i}$ in parallel with an edge gives a new leaf joined tree $\hat{T_i'}$ of maximum degree $\Delta$ such that $Z(\hat{T_i'};q)=Z^{\mathrm{same}}(\hat{T_i};q)=0$.
This finishes the proof.
\end{proof}

\section{Concluding remarks, questions and conjectures}\label{sec:questions}
In this paper we embarked on the quest to determine the location of the chromatic zeros of the family of series-parallel graphs.
While we have made several contributions, a complete characterization remains elusive, as is visible in  
Figure~\ref{fig:zeroes+zerofree}. 
Several concrete questions and conjectures arise in this regard.

First of all, it is important to note that Figure~\ref{fig:zeroes+zerofree} is a pixel picture, and the color of a pixel only displays the behavior of the center point of the pixel. Potential features of the picture that are smaller than the resolution will therefore be invisible.
We believe however that with a bit more effort one can create a more rigorous picture that looks exactly the same.
A pixel is colored blue, if for $q$ at the center of the pixel, there exist integers $n_1,\ldots,n_k$ (within the search depth $n_1\cdot\ldots\cdot n_k \leq 300$) with $|f_q(f_q(\cdots f_q(f_q(0)^{n_1})^{n_2}\cdots)^{n_k})| >1$; the darkest shade of blue corresponds to a search depth of $75$ the lighter shades correspond to a depth of $150$ and $300$ respectively. 
This composition is either an effective edge interaction, or a virtual interaction, of a series-parallel graph with $n_1\cdot\ldots\cdot n_k$ edges and so Theorem~\ref{thm:to escape or not to escape} ensures that $q$ is contained in the closure of the chromatic zeros of series parallel graphs. 

Answering a question from a previous version of the present paper, the second author showed in his thesis that 
Theorem~\ref{thm:to escape or not to escape} actually gives a complete characterization of the chromatic zeros of series parallel graphs. 
More precisely, in~\cite[Theorem 2.26]{Huiben} he showed that if $Z(G;q)=0$ for some $q\in \mathbb{C}\setminus \{0,1,2\}$ for some $G\in \mathcal{G}_{\mathrm{SP}}$ then there exists $G'\in \mathcal{G}_{\mathrm{SP}}$ such that $1<|y_{G'}(q)|<\infty$.

A pixel is colored orange in Figure~\ref{fig:zeroes+zerofree}, if for $q$ at the center of the pixel, it is possible to find a disk $V$ such that $f_q(V)=V$ and which satisfies the conditions of Lemma~\ref{lem:condition for absence of zeros SP}. There is a very explicit description of the disks $V$ satisfying $f_q(V)=V$. This makes it easy to check $0\in V$ and $1-q\not\in V^2$. The condition $V^2\subseteq V$ is verified by checking that $\sup\{|z|^2 \mid z\in V\} < \inf\{|z| \mid z\in \mathbb{C}\setminus V \}$.
Figure~\ref{fig:zeroes+zerofree} directly motivates the following conjecture.
\begin{conjecture}
For each $q$ in the punctured disk $B_{5/27}(1)\setminus\{1\}$ and any series-parallel graph $G$, $Z(G;q)\neq0$.
\end{conjecture}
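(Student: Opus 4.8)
The plan is to establish the conjecture through Lemma~\ref{lem:condition for absence of zeros SP}, which is a pointwise statement in $q$: it suffices to exhibit, for each individual $q\in B_{5/27}(1)\setminus\{1\}$, a disk $V=V(q)$ with $0\in V$, $1-q\notin V^2$, $V^2\subseteq V$ and $f_q(f_q(V)^2)\subseteq V$. Following Lemmas~\ref{lem:(0,1)} and \ref{lem:(32/27)} but allowing $q$ to be complex, one searches among the $f_q$-invariant disks. Recall that $f_q$ has fixed points $1\pm\sqrt q$ and is conjugated to $w\mapsto -w$ by the M\"obius map $w=\phi_q(y):=\frac{y-(1-\sqrt q)}{y-(1+\sqrt q)}$, where we take the principal branch of $\sqrt q$ so that $1-\sqrt q$ is the fixed point near $0$ when $q$ is near $1$. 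Hence the sets $V_\rho(q):=\phi_q^{-1}(\{w:|w|\le\rho\})$, $\rho\in(0,1)$, are honest round disks with $f_q(V_\rho(q))=V_\rho(q)$, and a short computation gives $V_\rho(q)=B_{R}(c)$ with $c=1-\tfrac{1+\rho^2}{1-\rho^2}\sqrt q$ and $R=\tfrac{2\rho}{1-\rho^2}\,|\sqrt q|$. The crucial simplification is that for an $f_q$-invariant disk the last two hypotheses of Lemma~\ref{lem:condition for absence of zeros SP} coincide: since $f_q$ is a bijective involution fixing $V_\rho(q)$ setwise, $f_q(f_q(V_\rho(q))^2)\subseteq V_\rho(q)$ is equivalent to $V_\rho(q)^2\subseteq V_\rho(q)$. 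It therefore remains to choose $\rho$ so that (a) $0\in V_\rho(q)$, (b) $V_\rho(q)^2\subseteq V_\rho(q)$, and (c) $1-q\notin V_\rho(q)^2$.

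Next I would turn (a)--(c) into inequalities in the single real parameter $\rho$. Condition (a) is exact: since $\phi_q(0)=\tfrac{1-\sqrt q}{1+\sqrt q}$, one has $0\in V_\rho(q)$ precisely when $\rho\ge\rho_0(q):=\bigl|\tfrac{1-\sqrt q}{1+\sqrt q}\bigr|$, and $\rho_0(q)\to0$ as $q\to1$. For (b), Proposition~\ref{prop: V^2=V^2} (Grace--Walsh--Szeg\H{o}) lets us replace $V_\rho(q)^2$ by the image $\{y^2:y\in V_\rho(q)\}$ of the disk under squaring; parametrising $\partial V_\rho(q)$ and squaring reduces (b) to the requirement that an explicit one-real-parameter curve lie inside $B_R(c)$, from which one reads off a threshold $\rho_1(q)$ so that (b) holds for $\rho\le\rho_1(q)$; condition (c) is then handled by observing that $1-q=f_q(0)\in f_q(V_\rho(q))=V_\rho(q)$, so $1-q$ lies on or inside $V_\rho(q)$ while the squared region is a proper, internally tangent subset that one checks does not reach $1-q$. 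Modulo this bookkeeping the conjecture reduces to the single two-variable inequality
\[
\rho_0(q)\ \le\ \rho_1(q)\qquad\text{for every }q\in B_{5/27}(1)\setminus\{1\}.
\]

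Two parts of the disk should be isolated first. Near $q=1$ the invariant disk $V_\rho(q)$ collapses onto the fixed point $1-\sqrt q\to0$ and $\rho_0(q)\to0$; there one can either push the construction through with $\rho$ of order $|q-1|^{1/2}$ or simply invoke Lemma~\ref{lem:around 1}, which already covers $|q-1|<7-4\sqrt3$, and glue. At the boundary point $q=32/27$ the construction is extremal: the $f_q$-invariant disk with real diameter $[-1/3,1/9]$, i.e.\ $V=B_{2/9}(-1/9)$, satisfies (a)--(c), with $V^2$ internally tangent to $\partial V$ at $1/9$ and $1-q=-5/27$ lying just outside $V^2$. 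This is exactly the tight spot behind Lemma~\ref{lem:(32/27)}: $-1/3$ is the parabolic fixed point of $g(z):=f_q(z^2)$ at $q=32/27$, where $g'(-1/3)=\tfrac{27}{32}q=1$, and it is precisely this obstruction that bounds the real interval $(1,32/27)$; thus $32/27$ is forced to lie on the boundary of any disk obtained this way, and $5/27$ is the most one can hope for along the real axis. One should also verify that the circle $\partial B_{5/27}(1)$, in particular the point $32/27$ itself, carries no chromatic zero of a series-parallel graph, so that the closed punctured disk and not merely its interior is covered.

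The main obstacle is proving $\rho_0(q)\le\rho_1(q)$ over the whole complex disk $B_{5/27}(1)$ with the correct constant. Because the construction has no slack at $q=32/27$, the crude estimates that suffice for Lemma~\ref{lem:around 1} --- bounding $V^2$ by $B_{(\max_{z\in V}|z|)^2}$, which yields only $|q-1|<7-4\sqrt3\approx0.072$ --- fall well short of $5/27\approx0.185$; one genuinely needs to control the non-circular region $\{y^2:y\in V_\rho(q)\}$ against the disk $V_\rho(q)$ sharply, an elementary but delicate joint optimisation in $\rho$ and in $q$. It is moreover conceivable that for some $q$ off the real axis no $f_q$-invariant disk works and one must enlarge the search to a more flexible family of regions, exactly as Lemma~\ref{lem:around 1} already does by using a non-invariant disk centred at the origin; in that case the pointwise optimisation over admissible shapes is the real substance of the argument, and matching the bound $5/27$ rather than a smaller explicit constant is likely the hardest step.
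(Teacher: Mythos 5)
The statement you are addressing is stated in the paper as a \emph{conjecture}: the authors do not prove it, and the most they establish near $q=1$ is the punctured disk of radius $(2-\sqrt3)^2\approx0.072$ from Lemma~\ref{lem:around 1}, together with the real interval $(0,32/27)$ from Lemmas~\ref{lem:(0,1)} and~\ref{lem:(32/27)}. So there is no proof in the paper to compare against, and on its own your proposal is a strategy outline rather than a proof. The sound parts are: the reduction via Lemma~\ref{lem:condition for absence of zeros SP}; the observation that for an $f_q$-invariant disk $V$ the condition $f_q(f_q(V)^2)\subseteq V$ collapses to $V^2\subseteq V$ (this is exactly the manipulation $f_q(f_q(V)^2)=f_q(V^2)\Subset f_q(V)=V$ used in the proof of Theorem~\ref{thm:(0,32/27)}); the explicit parametrisation of the invariant disks via the conjugacy of $f_q$ to $w\mapsto -w$; and the identification of $q=32/27$ as the extremal tangency point. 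Indeed, searching over $f_q$-invariant disks is precisely how the authors generated the orange region of Figure~\ref{fig:zeroes+zerofree}, so your plan matches the numerics that motivated the conjecture.

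The genuine gap is the step you yourself flag: the inequality $\rho_0(q)\le\rho_1(q)$ for all complex $q\in B_{5/27}(1)$ is deferred as ``bookkeeping'' but is never established, and it is the entire content of the conjecture. Off the real axis the squared image $\{y^2:y\in V_\rho(q)\}$ loses the symmetry about the real diameter of $V_\rho(q)$ that makes the real case tractable, the crude bound $V^2\subseteq B_{(\max_{z\in V}|z|)^2}$ provably falls short of the constant $5/27$ (it only yields $7-4\sqrt3$), and you concede that invariant disks may not suffice at all, in which case the class of admissible regions --- and hence the whole argument --- is unspecified. There are also smaller unresolved points: $B_{5/27}(1)$ is by the paper's convention a \emph{closed} disk, so the boundary circle must be handled, and there any disk-based argument degenerates to non-strict containments; this is delicate exactly at $q=32/27$, which by Theorem~\ref{thm:q>32/27} is an accumulation point of chromatic zeros of series-parallel graphs. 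Likewise your verification of the condition $1-q\notin V^2$ is carried out only at the single point $q=32/27$. As it stands the proposal proves nothing beyond what Lemmas~\ref{lem:(0,1)}, \ref{lem:(32/27)} and~\ref{lem:around 1} already give, and the conjecture remains open.
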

Note that our proof of Lemma~\ref{lem:around 1} gives a punctured disk of radius $(2-\sqrt{3})^2\approx 0.072$ around $1$, which is much less than $5/27 \approx 0.185$.

Another interesting question motivated by Theorem~\ref{thm:leaf joined} is whether there exist chromatic zeros with real part larger than the second largest degree for all degrees.
We have verified this question up to $\Delta\le 45$, see Table~\ref{tab:over_Delta} below.
\begin{table}[h!]
    \centering
    \[\begin{array}{c|rcr|c}
            \Delta & &q& & \text{type} \\ 
            \hline
            4  &  4.027  &+&  0.783 i &  3 , 2     \\
5  &  5.088  &+&  0.836 i &  4 , 3     \\
6  &  6.132  &+&  0.881 i &  5 , 4     \\
7  &  7.058  &+&  1.521 i &  6 , 4     \\
8  &  8.120  &+&  1.577 i &  7 , 5      \\
9  &  9.012  &+&  2.194 i &  8 , 5     \\
10  &  10.084  &+&  2.256 i &  9 , 6   \\
11  &  11.147  &+&  2.314 i &  10 , 7  \\
12  &  12.038  &+&  2.928 i &  11 , 7  \\
13  &  13.109  &+&  2.990 i &  12 , 8   \\
14  &  14.173  &+&  3.049 i &  13 , 9  \\
15  &  15.063  &+&  3.662 i &  14 , 9  \\
16  &  16.133  &+&  3.724 i &  15 , 10 \\
17  &  17.197  &+&  3.784 i &  16 , 11 \\
18  &  18.087  &+&  4.395 i &  17 , 11 \\
19  &  19.157  &+&  4.457 i &  18 , 12 \\
20  &  20.222  &+&  4.518 i &  19 , 13 \\
21  &  21.111  &+&  5.129 i &  20 , 13 \\
22  &  22.180  &+&  5.191 i &  21 , 14  \\
23  &  23.246  &+&  5.252 i &  22 , 15 \\
24  &  24.135  &+&  5.862 i &  23 , 15
\end{array}\quad
\begin{array}{c|rcr|c}
            \Delta & &q& & \text{type} \\ 
            \hline
25  &  25.204  &+&  5.925 i &  24 , 16 \\
26  &  26.269  &+&  5.986 i &  25 , 17 \\
27  &  27.158  &+&  6.596 i &  26 , 17 \\
28  &  28.227  &+&  6.658 i &  27 , 18 \\
29  &  29.293  &+&  6.719 i &  28 , 19 \\
30  &  30.182  &+&  7.329 i &  29 , 19 \\
31  &  31.251  &+&  7.392 i &  30 , 20 \\
32  &  32.317  &+&  7.453 i &  31 , 21 \\
33  &  33.206  &+&  8.063 i &  32 , 21 \\
34  &  34.274  &+&  8.125 i &  33 , 22 \\
35  &  35.340  &+&  8.187 i &  34 , 23  \\
36  &  36.229  &+&  8.796 i &  35 , 23 \\
37  &  37.298  &+&  8.859 i &  36 , 24 \\
38  &  38.364  &+&  8.920 i &  37 , 25  \\
39  &  39.252  &+&  9.530 i &  38 , 25  \\
40  &  40.321  &+&  9.592 i &  39 , 26 \\
41  &  41.387  &+&  9.654 i &  40 , 27 \\
42  &  42.276  &+&  10.263 i &  41 , 27\\
43  &  43.344  &+&  10.326 i &  42 , 28\\
44  &  44.411  &+&  10.387 i &  43 , 29\\
45  &  45.299  &+&  10.997 i &  44 , 29

    \end{array}\]
    
    \caption{Table of parameters $q$ with real part bigger than $\Delta$, such that $q$ is active for the following family of leaf joined trees:
    construct trees where alternately every vertex has down degree exactly $d_1$ resp. $d_2$, add $d_1-d_2$ leaves to the down vertices of degree $d_2$, and add one vertex connected to all leaves. The proof of Theorem~\ref{thm:leaf joined} implies  that chromatic zeros of leaf joined trees of maximum degree $d_1+1$ accumulate at $q$.}
    \label{tab:over_Delta}
\end{table}
The values were obtained using the technique of Buys~\cite{Buys} to find zeros of the independence polynomial. 
First we find a family of spherically regular trees of degree $d_1\ge d_2$ that are active at $\lambda\in \mathbb{C}$ for this family,  using Appendix~B of \cite{Buys}. 
Therefore by Corollary~\ref{cor:active implies active} we obtain that $q_0$ is active for $\mathcal{T}_{d_1}$, where we choose $q_0$ to be the solution of $\lambda(q_0,d_1)=\lambda$ of the  largest real part.

\begin{figure}[h!]
\begin{center}
\includegraphics{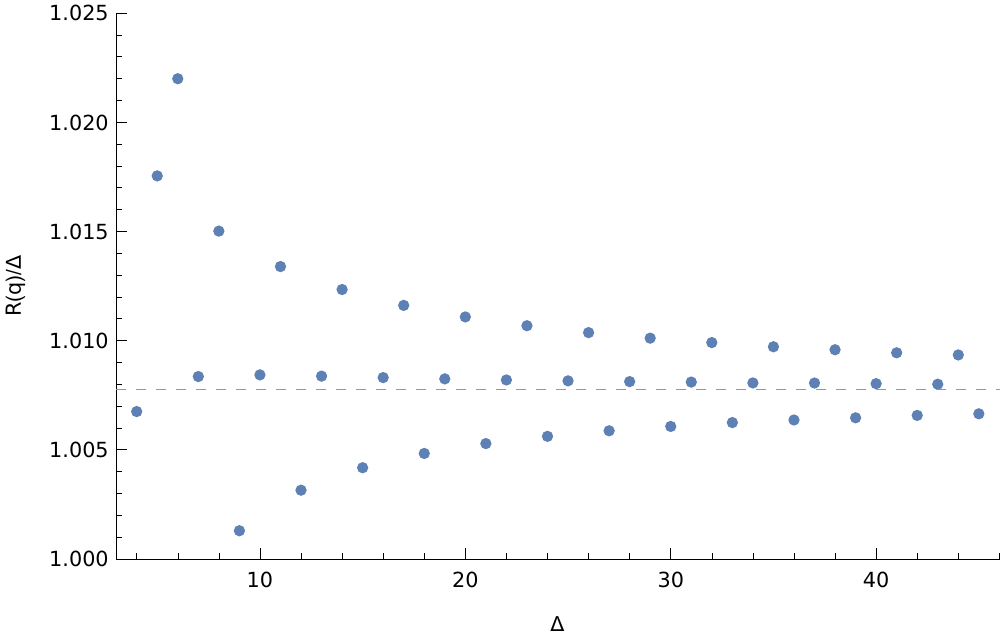}
\end{center}
\caption{For each $\Delta=4,\ldots,45$ we record the value of $\Re(q)/\Delta$ from Table~\ref{tab:over_Delta}. The orange dashed line denotes the limiting value as $d_1\to \infty$ and $d_2/d_1\to 2/3$.}
\label{f:data2/3}
\end{figure}

Figure \ref{f:data2/3} strongly supports the following conjecture. This is related to a question from~\cite{PetersRegts,Buys} on zeros of the independence polynomial of bounded degree graphs.
\begin{conjecture}
Theorem~\ref{thm:leaf joined} is true with $\Delta_0=3$.
\end{conjecture}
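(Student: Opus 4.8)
The plan is to reduce the conjecture to a purely independence-theoretic statement and then to attack that statement degree by degree. Inspecting the proof of Theorem~\ref{thm:leaf joined}, the only ingredient that forces $\Delta$ to be large is Theorem~\ref{thm:active ind}, which produces a non-real $u_0\in\mathcal{N}_\Delta$ only for $\Delta\geq\Delta_0$. Everything downstream is degree-independent: given any non-real $u_0\in\mathcal{N}_\Delta$ one sets $q_0=1+d/u_0$, uses that $u\mapsto 1+d/u$ carries the interior of $B_{1/2}(1/2)$ onto $\{\Re(q)>\Delta\}$, checks $\lambda(q_0,d)=\lambda_\Delta(-u_0)$, and then applies Corollary~\ref{cor:active implies active} to obtain $q_0\in\mathcal{A}_{\mathcal{T}_d}$, followed by Montel's theorem (Theorem~\ref{thm:montel}) and the zero-extraction argument at the end of the proof of Theorem~\ref{thm:leaf joined}. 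Thus I would first record that the conjecture is \emph{equivalent} to the assertion that $\mathcal{N}_\Delta$ contains a non-real point for every integer $\Delta\geq 3$, that is, to Theorem~\ref{thm:active ind} with $\Delta_0=3$.

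The second step is to analyze $\mathcal{N}_\Delta$ at finite $\Delta$. By Lemma~\ref{lem:ind ratios} the family $\mathcal{P}_\Delta$ is generated by the recursion $F_{\lambda,d}$, and $\lambda_\Delta(-u)$ is exactly the parameter whose symmetric fixed point of $z\mapsto \lambda/(1+z)^d$ has multiplier of modulus $|u|$; since $B_{1/2}(1/2)\subseteq\{|u|\le 1\}$, the parameters in question lie on or inside the ``main cardioid'' $\mathcal{C}_\Delta$. Non-normality there is governed by the boundary of the degree-$\Delta$ zero-free region of the independence polynomial described in~\cite{bencs2021limit}. The large-$\Delta$ proof exploited that this boundary develops a $120^\circ$ corner at $e$ in the limit $\mathcal{U}_\infty$; since that is an infinite-degree phenomenon, for finite $\Delta$ I would instead work with the finite-degree region directly. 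Concretely, I would pursue the explicit active families already used for Table~\ref{tab:over_Delta}, namely the alternating down-degree trees, and verify rigorously (via the escaping-ratio criterion of Theorem~\ref{thm:to escape or not to escape}, or equivalently by locating a repelling cycle of $F_{\lambda,d}$ as in~\cite{Buys,de2021zeros}) that the corresponding occupation ratio escapes the closed unit disk at the image $\lambda_\Delta(-u_0)$ of a non-real $u_0$ strictly inside $B_{1/2}(1/2)$. A complementary route would be to prove a monotonicity statement propagating membership of a non-real point in $\mathcal{N}_\Delta$ from one degree to the next, which together with the verified range $4\le\Delta\le 45$ and the large-degree theorem would close the remaining gap.

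The crux, and the main obstacle I expect, is the smallest case $\Delta=3$ (so $d=2$), where the activity region is closest to the critical line $\Re(q)=3$ and the asymptotic corner argument is least informative. Here the plan is a self-contained study of the degree-two recursion $z\mapsto \lambda/\big((1+z_1)(1+z_2)\big)$, equivalently of the chromatic recursion $\widetilde{R}(\hat T;q)=F_{\lambda(q,2),2}\big(\widetilde{R}(\hat T_1;q),\widetilde{R}(\hat T_2;q)\big)$ from Proposition~\ref{prop:chromatic ratios}, aiming to exhibit an explicit non-real $q$ with $\Re(q)>3$ at which some leaf joined tree interaction has modulus exceeding $1$, and then invoking Theorem~\ref{thm:to escape or not to escape}. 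The genuine difficulty is not the algebra but the geometry: one must prove that the finite-degree activity locus crosses \emph{strictly} into $\{\Re(q)>\Delta\}$ for every small $\Delta$, rather than merely grazing the line $\Re(q)=\Delta$, and the only currently available analytic leverage for this transversality, the corner at $e$, disappears at finite degree. A new degree-uniform argument, or a rigorous computer-assisted verification for each $\Delta$ below the large-degree threshold together with the $\Delta=3$ base case, therefore seems to be required.
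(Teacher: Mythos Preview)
The statement you address is a \emph{conjecture}; the paper offers no proof of it, so there is nothing on the paper's side to compare against. Your write-up is itself explicit that it is a strategy rather than a proof: you lay out the reduction, note that the finite-$\Delta$ range would need either rigorous computer verification or a new monotonicity argument, and isolate $\Delta=3$ as the hard case for which ``a new degree-uniform argument \ldots\ seems to be required.'' That is an accurate summary of where things stand and matches how the paper frames the problem (the range $4\le\Delta\le45$ is treated numerically via Table~\ref{tab:over_Delta}, large $\Delta$ via Theorem~\ref{thm:active ind}, and the rest is left open).

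One point to correct: your asserted \emph{equivalence} between the conjecture and ``$\mathcal{N}_\Delta$ contains a non-real point for every $\Delta\ge3$'' only goes one way. The proof of Theorem~\ref{thm:leaf joined} shows that a non-real $u_0\in\mathcal{N}_\Delta$ is \emph{sufficient} to produce the desired leaf joined tree zero; nothing in the paper establishes the converse. The inclusion in Proposition~\ref{prop:chromatic ratios} is one-sided, and Corollary~\ref{cor:active implies active} transfers activity only from the independence side to the chromatic side. A chromatic zero of a leaf joined tree with $\Re(q)>\Delta$ could in principle arise without the corresponding $u$ lying in $\mathcal{N}_\Delta$. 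So your reduction is an implication, not an equivalence, and even if the independence route failed at some small $\Delta$, the conjecture would not thereby be refuted.
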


We end with a question on the possible extension of one of our result to a larger family of graphs to which our techniques do not seem to apply.
\begin{?}
What can be said about planar or triangulated planar graphs? Is it true that there are no chromatic zeros for these graphs in a punctured open set containing the interval $(0,32/27)$?
\end{?}


\section*{Acknowledgment}
We thank Pjotr Buys for useful discussions regarding the counterexamples found in Theorem~\ref{thm:leaf joined} and Table~\ref{tab:over_Delta}.
We moreover thank the anonymous referees for constructive feedback and we thank Fengmin Dong for pointing out an error in the proof of Corollary 2 in an earlier version of the paper.



\begin{thebibliography}{15}

\bibitem{bencs2021limit}
Ferenc Bencs, Pjotr Buys, and Han Peters.
\newblock The limit of the zero locus of the independence polynomial for
  bounded degree graphs.
\newblock {\em arXiv preprint arXiv:2111.06451}, 2021.

\bibitem{BKW78}
S.~Beraha, J.~Kahane, and N.~J. Weiss.
\newblock Limits of zeros of recursively defined families of polynomials.
\newblock In {\em Studies in foundations and combinatorics}, volume~1 of {\em
  Adv. in Math. Suppl. Stud.}, pages 213--232. Academic Press, New York-London,
  1978.

\bibitem{BGGS20}
Ivona Bez\'{a}kov\'{a}, Andreas Galanis, Leslie~Ann Goldberg, and Daniel
  \v{S}tefankovi\v{c}.
\newblock Inapproximability of the independent set polynomial in the complex
  plane.
\newblock {\em SIAM J. Comput.}, 49(5):STOC18--395--STOC18--448, 2020.

\bibitem{Buys}
Pjotr Buys.
\newblock Cayley trees do not determine the maximal zero-free locus of the
  independence polynomial.
\newblock {\em Michigan Math. J.}, 70(3):635--648, 2021.

\bibitem{CarlesonGamelinBook}
Lennart Carleson and Theodore~W. Gamelin.
\newblock {\em Complex dynamics}.
\newblock Universitext: Tracts in Mathematics. Springer-Verlag, New York, 1993.

\bibitem{de2021zeros}
David de~Boer, Pjotr Buys, Lorenzo Guerini, Han Peters, and Guus Regts.
\newblock Zeros, chaotic ratios and the computational complexity of
  approximating the independence polynomial.
\newblock {\em arXiv preprint arXiv:2104.11615}, 2021.

\bibitem{Huiben}
Jeroen Huijben.
\newblock {\em Chromatic polynomials: zeros, algorithms, and computational
  complexity}.
\newblock PhD thesis, University of Amsterdam, 2023.

\bibitem{jackson1993zero}
Bill Jackson.
\newblock A zero-free interval for chromatic polynomials of graphs.
\newblock {\em Combinatorics, Probability and Computing}, 2(3):325--336, 1993.

\bibitem{MilnorBook}
John Milnor.
\newblock {\em Dynamics in one complex variable}.
\newblock Friedr. Vieweg \& Sohn, Braunschweig, 1999.
\newblock Introductory lectures.

\bibitem{PetersRegts}
Han Peters and Guus Regts.
\newblock On a conjecture of {S}okal concerning roots of the independence
  polynomial.
\newblock {\em Michigan Math. J.}, 68(1):33--55, 2019.

\bibitem{RoyleSokal}
Gordon~F. Royle and Alan~D. Sokal.
\newblock Linear bound in terms of maxmaxflow for the chromatic roots of
  series-parallel graphs.
\newblock {\em SIAM J. Discrete Math.}, 29(4):2117--2159, 2015.

\bibitem{Sokaldense}
Alan~D. Sokal.
\newblock Chromatic roots are dense in the whole complex plane.
\newblock {\em Combin. Probab. Comput.}, 13(2):221--261, 2004.

\bibitem{Sokalsurvey}
Alan~D. Sokal.
\newblock The multivariate {T}utte polynomial (alias {P}otts model) for graphs
  and matroids.
\newblock In {\em Surveys in combinatorics 2005}, volume 327 of {\em London
  Math. Soc. Lecture Note Ser.}, pages 173--226. Cambridge Univ. Press,
  Cambridge, 2005.

\bibitem{thomassen1997zero}
Carsten Thomassen.
\newblock The zero-free intervals for chromatic polynomials of graphs.
\newblock {\em Combinatorics, Probability and Computing}, 6(4):497--506, 1997.

\end{thebibliography}

\end{document}